\documentclass[12pt]{amsart}  %12

\usepackage{fullpage}
\usepackage{url,amssymb,amsmath,amsfonts,amsthm,mathrsfs}

\DeclareFontEncoding{OT2}{}{} % to enable usage of cyrillic fonts
\newcommand{\textcyr}[1]{%
 {\fontencoding{OT2}\fontfamily{cmr}\fontseries{m}\fontshape{n}\selectfont #1}}
\newcommand{\Sha}{{\mbox{\textcyr{Sh}}}}

\usepackage[all]{xy} \SelectTips{cm}{10}

\usepackage{color}

\definecolor{webcolor}{rgb}{0.8,0,0.2}
\definecolor{webbrown}{rgb}{.6,0,0}
\usepackage[
        colorlinks,
        linkcolor=webbrown,  filecolor=webcolor,  citecolor=webbrown, 
        backref,
        pdfauthor={David Zywina}, 
       pdftitle={},
]{hyperref}
\usepackage[alphabetic,backrefs,lite]{amsrefs} % for bibliography

\numberwithin{equation}{section}

\newcommand{\FF}{\mathbb F}

\newcommand{\PP}{\mathbb P}
\newcommand{\QQ}{\mathbb Q}

\newcommand{\ZZ}{\mathbb Z}

\newcommand{\OO}{\mathcal O}

\newcommand{\calB}{\mathcal B}

\newcommand{\calA}{\mathcal A}

\newcommand{\calN}{\mathcal N}

\newcommand{\calM}{\mathcal M}

\newcommand{\p}{\mathfrak p}
\newcommand{\m}{\mathfrak m}

\newcommand{\scrE}{\mathscr E}

\newcommand{\scrU}{\mathscr U}

\newcommand{\scrF}{\mathscr F}

\def\Im{{\operatorname{Im}}}

\def\rank{{\operatorname{rank}}}

\def\Spec{\operatorname{Spec}} 
 
\def\Sel{\operatorname{Sel}} 

\def\Gal{\operatorname{Gal}}

\newcommand{\q}{\mathfrak q}

\newcommand{\legendre}[2]{\genfrac{(}{)}{}{}{#1}{#2}}

\newcommand{\defi}[1]{\textsf{#1}} % for defined terms

\newcommand\blank[1]{}

\def\bbar#1{\setbox0=\hbox{$#1$}\dimen0=.2\ht0 \kern\dimen0 
\overline{\kern-\dimen0 #1}}
 
\newcommand{\Kbar}{{\bbar{K}}}

\newtheorem{thm}{Theorem}[section]
\newtheorem{lemma}[thm]{Lemma}
\newtheorem{cor}[thm]{Corollary}
\newtheorem{prop}[thm]{Proposition}

\theoremstyle{definition}

\theoremstyle{remark}

\newenvironment{romanenum}{\hfill \begin{enumerate} }{\end{enumerate}}
\newenvironment{alphenum}{\hfill \begin{enumerate} }{\end{enumerate}}

\begin{document}

\title{On the infinitude of elliptic curves over a number field with prescribed small rank}
\subjclass[2020]{Primary 11G05; Secondary 14J27}

% 11G05 Elliptic curves over global fields
% 14J27 Elliptic surfaces, elliptic or Calabi-Yau fibrations
% \keywords{}
\author{David Zywina}
\address{Department of Mathematics, Cornell University, Ithaca, NY 14853, USA}
\email{zywina@math.cornell.edu}

\begin{abstract}
For any number field $K$ and integer $0\leq r \leq 4$, we prove that there are infinitely many elliptic curves over $K$ of rank $r$.   Our elliptic curves are obtained by specializing well-chosen nonisotrivial elliptic curves over the function field $K(T)$.   We use a result of Kai, which generalizes work of Green, Tao and Ziegler to number fields, to choose our specializations so that we have control over the bad primes and can perform a $2$-descent to compute ranks.
\end{abstract}

\maketitle

\section{Introduction}

Let $K$ be a number field.  For an elliptic curve $E$ defined over $K$, the abelian group $E(K)$ consisting of the $K$-rational points of $E$ is finitely generated.   The \defi{rank} of $E$ is the rank of the abelian group $E(K)$.   

There are two natural questions concerning the distribution of ranks.  What integers occur as the rank of an elliptic curve over $K$?   What integers occur as the rank of infinitely many elliptic curves over $K$?   Our main theorem addresses these questions for small ranks.  Let $\Kbar$ be an algebraic closure of $K$.

\begin{thm} \label{T:main}
For any number field $K$ and integer $0\leq r \leq 4$, there are infinitely many elliptic curves over $K$, up to isomorphism over $\Kbar$, of rank $r$.
\end{thm}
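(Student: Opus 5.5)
For each $0\le r\le 4$ I would fix a nonisotrivial elliptic curve $\mathcal E$ over $K(T)$ with full rational $2$-torsion, chosen so that $\mathcal E(K(T))$ has rank at least $r$. The lower bound on the rank is inherited by the specializations $E_t$ for all but finitely many $t\in K$, by Silverman's specialization theorem. A concrete model is
\[
y^2=(x-a_1(T))(x-a_2(T))(x-a_3(T))
\]
with each $a_i(T)$ a product of linear forms in $T$ over $K$. The linear forms should be chosen so that the differences $a_i-a_j$ factor into pairwise non-proportional linear polynomials. Then $r$ independent sections are built in by forcing certain of those differences to be squares times prescribed factors. This is the standard trick of placing the points $(a_i+\square,\dots)$, and the choice fixes an explicit family for each $r\le 4$. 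The point of taking full $2$-torsion is that a complete $2$-descent via the map $E(K)/2E(K)\hookrightarrow (K^\times/K^{\times2})^2$ is then explicit. The $2$-Selmer group of $E_t$ is cut out by local conditions at the primes dividing $2\Delta(E_t)$ and the infinite places.

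The upper bound comes from controlling the bad primes. The discriminant of $E_t$ is, up to a fixed constant, a product of the linear forms $L_1(t),\dots,L_m(t)$, each evaluated at $t$. I would invoke Kai's generalization of the Green--Tao--Ziegler theorem to number fields. It lets me choose infinitely many $t$, in an appropriate lattice or congruence class and in a convex region, so that each $L_i(t)$ generates a prime ideal $\mathfrak p_i$, outside a fixed finite set $S$ of primes. The set $S$ contains the primes over $2$ and generators of the class group, and is taken large enough that $\mathcal O_S$ has trivial class group. With the congruence conditions I would also prescribe $L_i(t)$ modulo a high power of each prime of $S$ and its signs at real places. This fixes the local behaviour of $E_t$ at $S$ and at infinity, and also fixes the residue symbols relating the $\mathfrak p_i$ to elements of $\mathcal O_S^\times$.

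With that arranged, the Selmer computation is finite. A Selmer element is represented by a pair $(d_1,d_2)$ of elements of the group generated by $\mathcal O_S^\times$ and the $L_i(t)$, modulo squares. The local conditions at $\mathfrak p_i$ are quadratic residue conditions of the form: $d$ is a square mod $\mathfrak p_i$, or $L_j(t)$ is a square mod $\mathfrak p_i$. These are governed by quadratic reciprocity in $K$, so once the congruence classes are fixed they depend only on the data prescribed mod $S$. I would pick the congruence class so that $\dim_{\FF_2}\Sel_2(E_t/K)=r+2$. Then $\rank E_t(K)\le r$, and combined with the lower bound the rank is exactly $r$. Nonisotriviality makes the $j$-invariants $j(E_t)$ take infinitely many values, which gives infinitely many $\Kbar$-isomorphism classes.

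The main obstacle is the arithmetic design step for each $r$. I need linear forms $L_i$ that are pairwise non-proportional, so that Kai's theorem applies, the prime values exist, and there are no local obstructions. I also need a congruence class in which the Selmer rank is forced to be exactly $r+2$, using reciprocity and the choice of residues. The difficulty is that extra Selmer classes can survive for global reasons, from units and the class group of $K$. I would first try enlarging $S$ and imposing that each $L_i(t)$ be a nonsquare modulo suitable auxiliary primes to kill those classes. The cap at $r=4$ is presumably where this balancing stops being achievable with families of this shape.
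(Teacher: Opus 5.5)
\textbf{The gap.} The missing step is the one you yourself label ``the main obstacle''. You never show that some admissible $t$ gives $\dim_{\FF_2}\Sel_2(E_t/K)=r+2$, and that is the whole content of the theorem.

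The reason you give for this being a finite, controllable check is also not right.
\begin{itemize}
\item If, as your phrasing suggests, $t$ is integral and the forms are $t-e_i$, Kai's theorem does not apply. Like Green--Tao--Ziegler, it needs forms whose linear parts are pairwise non-proportional, and one-variable forms $t-e_i$ give a twin-prime problem. You must take $t=m/n$ and make the binary forms $m-e_in$ prime, as the paper does.
\item Once you do that, the condition at $\p_i$ involves $\legendre{m-e_jn}{\p_i}=\legendre{(e_i-e_j)n}{\p_i}$. The factor $\legendre{n}{\p_i}$ is not fixed by congruence data at $S$: reciprocity only trades it for a symbol of $m$ modulo the prime-to-$S$ part of $n$, which you do not control.
\item More fundamentally, suppose every local condition were pinned down. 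You still give no reason why square classes supported on the $\p_i$, beyond those coming from $\mathcal E(K(T))$, fail to be Selmer classes. Nor do you give any count showing that nonsquare conditions at auxiliary primes would suffice to kill them.
\end{itemize}

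\textbf{The missing idea.} The paper makes the Selmer computation independent of the symbols $\legendre{\p_e}{\p_{e'}}$ altogether.
\begin{itemize}
\item It uses a single $2$-torsion point and $2$-isogeny descent, with $E/K(T)$ of maximal rank $r=\deg\calN-4$.
\item Conditions (\ref{I:new c})--(\ref{I:new e}) make $\Im(\delta_{E'_t,\p_e})$ trivial, everything, or of order $2$ according to the type of $e$. These are computed from Tamagawa ratios alone, with no residue symbols.
\item Hence a $\phi_t$-Selmer class is, up to an $S\cup\{\q_1\}$-unit, a product of certain $m-en$. The auxiliary prime $\q_2$ forces that product to have even length. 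Maximal rank (conditions (\ref{I:new f}), (\ref{I:new g})) then says every such pattern is already $f(t)$ for a global class $f\in\scrF'$.
\item The leftover unit is killed by the balanced local subgroups $\Phi_v$ of Lemma~\ref{L:balance} plus one reciprocity computation with $\q_1=\pi\OO_K$.
\item The dual Selmer group then follows from Cassels' formula, since $\prod_{v\in S\cup S_\infty}\tfrac12|\Phi_v|=1$.
\end{itemize}

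\textbf{Why full $2$-torsion cannot support this for $r=3,4$.} For $y^2=(x-a_1)(x-a_2)(x-a_3)$ the discriminant is a square, and $E[2]$ injects into the component groups at additive fibres. So only $\operatorname{I}_{2k}$ and $\operatorname{I}_{2k}^*$ fibres occur, and a rational elliptic surface has rank at most $2$. For $r=3,4$ you would therefore be on surfaces with $p_g\ge 1$. There the rank is strictly below $\deg\calN-4$, and the mechanism of global points filling every admissible support pattern is unavailable. The cap $r\le 4$ in the paper comes from rational elliptic surfaces with a $2$-torsion point, not from a balancing failure.
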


This theorem covers all cases of number fields $K$ and integers $r$ for which it was previously known that there are infinitely many elliptic curves over $K$ of rank $r$. 
See \S\ref{SS:earlier} for some previous results.  

We state the following immediate consequence to emphasize that even the existence of an elliptic curve of a given rank is a nontrivial result when working over a general number field.

\begin{cor} \label{C:main}
For any number field $K$ and integer $0\leq r \leq 4$, there is an elliptic curve $E$ over $K$ of rank $r$.
\end{cor}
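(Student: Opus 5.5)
This corollary follows at once from Theorem~\ref{T:main}, and the plan is simply to specialize that statement. Fix a number field $K$ and an integer $0\le r\le 4$. Theorem~\ref{T:main} provides infinitely many elliptic curves over $K$ of rank $r$, pairwise non-isomorphic over $\Kbar$. In particular this collection is nonempty, so it contains at least one elliptic curve $E/K$ with $\rank E(K)=r$, which is exactly the assertion of Corollary~\ref{C:main}.

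Since the corollary is a formal consequence, all of the real content sits in the proof of Theorem~\ref{T:main}. That proof will go as follows. For each $r$, choose a nonisotrivial elliptic curve over $K(T)$ with full rational $2$-torsion, say $y^2=(x-a_1(T))(x-a_2(T))(x-a_3(T))$ with linear $a_i$, arranged so that its Mordell--Weil group over $K(T)$ has $r$ explicit independent points. Specialization then gives $\rank E_t(K)\ge r$ for all but finitely many $t\in K$, by Silverman's specialization theorem.

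For the upper bound, carry out a full $2$-descent on $E_t$. Using Kai's number field generalization of Green--Tao--Ziegler, pick $t$ so that the relevant linear forms in $t$ take prime values, up to units and a fixed finite set $S$ of primes. This controls the bad primes of $E_t$, and the $2$-Selmer group can then be computed through local conditions at those few primes. One arranges the local conditions, by congruence and sign constraints imposed on $t$, so that $\dim_{\FF_2}\Sel_2(E_t)=r+2$; this bounds the rank by $r$. Infinitely many such $t$ yield infinitely many distinct $j$-invariants, because $j$ is a nonconstant function of $t$.

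The main obstacle is this Selmer computation. It requires choosing the family and the local conditions so that Kai's theorem applies (the linear forms must be pairwise affinely independent) while the quadratic-residue conditions linking the new primes still force the Selmer group to be as small as possible.
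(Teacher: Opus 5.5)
Your argument is the paper's: Corollary~\ref{C:main} is stated there as an immediate consequence of Theorem~\ref{T:main}, since an infinite collection of rank-$r$ curves over $K$ is in particular nonempty. The sketch you append for Theorem~\ref{T:main} is not needed, and as written it would not work: with linear $a_i$ the rank over $\Kbar(T)$ is at most $1$, and even with higher-degree $a_i$ a rational elliptic surface with full $2$-torsion has rank at most $2$, so $r=3,4$ are out of reach; the paper instead uses a single $2$-torsion point, descent by $2$-isogeny, and families of maximal rank $r=2|\calA|+|\calM|+|\calM'|-4$.
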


Specialization gives a natural method for constructing many elliptic curves of large rank.  From Elkies \cite[II]{Elkies}, there is a nonisotrivial elliptic curve $E$ over the function field $\QQ(T)$ so that the finitely generated group $E(\QQ(T))$ has rank $18$.  A theorem of Silverman \cite{MR703488} then implies that for all but finitely many $t\in K$, specializing a fixed Weierstrass model of $E$ at $t$ produces an elliptic curve $E_t$ over $K$ of rank \emph{at least} $18$.  Unfortunately, it is unclear how to find a fixed $r\geq 18$ such that $E_t$ has rank $r$ for infinitely many $t\in K$.   For each $0\leq r\leq 4$, our theorem is proved by using a specific nonisotrivial elliptic curve $E/\QQ(T)$ of rank $r$ and showing that there are infinitely many $t\in K$ for which $E_t/K$ has rank $r$.

Concerning what ranks actually occur over a fixed number field, it is not even clear if they are uniformly bounded, cf.~\cite[\S3]{heuristic} for a brief history of the problem.  In \cite{heuristic}, a heuristic is given for uniformly bounded ranks which predicts that there are infinitely many elliptic curves over $\QQ$ of rank $r$ for each $0\leq r\leq 20$ and that there are only finitely many elliptic curves over $\QQ$ of rank greater than $21$.

\subsection{Earlier results} \label{SS:earlier}

The $r=0$ case of Theorem~\ref{T:main} was proved by Mazur and Rubin, cf.~\cite[Theorem 1.11]{MR2660452}.   The $r=1$ case of Theorem~\ref{T:main} was until recently known only for a few number fields $K$;  for example,  see \cite[Th\'eor\`eme~3.1]{MR870738} or \cite{MR3237733} for $K=\QQ$.

Other special cases of Theorem~\ref{T:main} are quite recent.  With $K=\QQ$, the $r=2$ case of Theorem~\ref{T:main} was proved in \cite{Zyw25b}.  The full $r=1$ case was proved in \cite{Zyw25c} and also independently by Koymans and Pagano \cite{KPnew} who built off their ideas in \cite{KP}. In \cite{Savoie}, it is shown that there are infinitely many elliptic curves over $\QQ(i)$ of rank $2$; however, they all have $j$-invariant $1728$.  

The basic strategy in the author's recent rank papers \cite{Zyw25a,Zyw25b,Zyw25c} is to consider a well-chosen nonisotrivial $E$ over a function field $K(T)$ that has rank $r$ and to prove that the specialization $E_t$ over $K$ also has rank $r$ for infinitely many $t\in K$.  In \cite{KPnew}, rank $1$ elliptic curves over $K$ are constructed by considering quadratic twists of a ``generic'' elliptic over $K$ with full $2$-torsion.

\subsection{Overview}

We give some motivation for our approach in the optional section \S\ref{S:ideas}.  In \S\ref{S:background}, we recall what we need on Selmer groups and compute some local conditions.   In \S\ref{S:general}, we state an axiomatic version of our theorem which is proved in \S\ref{S:proof of general}.   Finally in \S\ref{S:proof of main}, we use five explicit elliptic curves $E/K(T)$ to prove Theorem~\ref{T:main} for each $0\leq r\leq 4$ by applying our axiomatic version.

\subsection{Notation}

Let $K$ be a number field and let $\OO_K$ be its ring of integers.   For each nonzero prime ideal $\p$ of $\OO_K$, let $v_\p$ be the discrete valuation on $K$ normalized so that $v_\p(K^\times)=\ZZ$.   

For a finite set $S$ of nonzero prime ideals of $\OO_K$, let $\OO_{K,S}$ be the ring of $S$-integers, i.e., the ring of $a\in K$ for which $v_\p(a)\geq 0$ for all nonzero prime ideals $\p\notin S$ of $\OO_K$.

For a place $v$ of $K$, we denote by $K_v$ the completion of $K$ at $v$.    When $v$ is finite, we let $\OO_v$ be the valuation subring of $K_v$.  Note that when $v$ is a finite place of $K$, we will frequently switch between $v$ and the corresponding prime ideal $\p$ of $\OO_K$.

\section{Ideas and motivation} \label{S:ideas}

We now describe some strategy and motivation for our proof; this will not be referred to later and can be safely skipped.  Let $K$ be a number field.   We will identify $K(T)$ with the function field of $\PP^1_K =\Spec K[T] \cup \{\infty\}$.

Consider a nonisotrivial elliptic curve $E$ over $K(T)$  defined by a Weierstrass of the form
\[
y^2=x^3+a x^2 +b x,
\]
with $a,b\in K[T]$, that is minimal over $K[T]$.   The point $(0,0)\in E(K(T))$ has order $2$.  There is a degree $2$ isogeny $\phi\colon E\to E'$ whose kernel is generated by $(0,0)$ and we may take $E'$ to be defined the elliptic curve over $K(T)$ defined by 
\[
y^2=x^3+a'x^2+b'x
\]
with $a':=-2a$ and $b':=a^2-4b$.    We assume that the groups $E(K(T))$ and $E'(K(T))$ are known.

Let $\pi\colon \scrE \to \PP^1_K$ be the smooth minimal elliptic surface corresponding to $E/K(T)$.   Let $\calB$ be the (finite) set of points in $\PP^1(K)$ over which the fiber of $\pi$ is singular.    After replacing $T$ by another generator of the function field of $\PP^1_K$ over $K$ and changing the Weierstrass model, we may assume that $\calB$ is a subset of $\OO_K \subseteq K\cup \{\infty\} = \PP^1(K)$.

 Let $\Delta\in K[T]$ be the discriminant of our Weierstrass model of $E$.  Note that $\calB$ is the set of roots of $\Delta$ in $K$ since the model is minimal.  For each $t\in K-\calB$, specializing our models at $t$ gives an elliptic curve $E_t$ over $K$ defined by $y^2=x^3+a(t) x^2 +b(t) x$ and an elliptic curve $E'_t$ over $K$ defined by $y^2=x^3+a'(t) x^2 +b'(t) x$.  Using that the fiber of $\pi$ over $\infty$ is smooth, one can show that there is a finite set $S$ of nonzero prime ideals of $\OO_K$ such that for each $t\in K-\calB$, $E_t$ has good reduction at a nonzero prime ideal $\p\notin S$ of $\OO_K$ if and only if $\Delta(t)$ has positive $\p$-adic valuation.

The abelian group $E(K(T))$ is finitely generated and we denote its rank by $r$. From Silverman \cite{MR703488}, specialization by $t$ defines an injective homomorphism $E(K(T))\to E_t(K)$ for all but finitely many $t\in K-\calB$.  In particular, we have 
 \[
  \rank \, E_t(K) \geq r
 \] 
 for all but finitely many $t\in K-\calB$.     The challenge is to prove the existence of $t\in K-\calB$ for which $E(K)$ has rank \emph{exactly} $r$.  A \emph{minimalist conjecture} predicts that the density of $t\in K-\calB$ for which $E_t$ has rank $r$ will be positive (the heuristic being that a typical curve over $K$ in the family should have the smallest possible rank that is compatible with the parity conjecture).  

The main approach to computing upper bounds of ranks is via Selmer groups.  In our case, we will perform descent by $2$-isogeny, cf.~\S\ref{SS:2-descent} for details.   Take any $t\in K-\calB$.  There is an isogeny $\phi_t\colon E_t \to E_t'$ of degree $2$ whose kernel is generated by $(0,0)$.  Using group cohomology, we obtain a group homomorphism
\[
\delta_{E_t'}\colon E'_t(K) \to K^\times/(K^\times)^2
\]
with kernel $\phi_t(E(K))$ that satisfies $\delta_{E'_t}((x,y))=x\cdot (K^\times)^2$ for $(x,y)\in E_t'(K)-\{0,(0,0)\}$ and $\delta_{E'_t}((0,0))= b'(t) \cdot (K^\times)^2$.  Similarly for each place $v$ of $K$, we have a homomorphism $\delta_{E_t',v}\colon E'_t(K_v) \to K_v^\times/(K_v^\times)^2$.   

We can define the $\phi_t$-\emph{Selmer group} $\Sel_{\phi_t}(E_t/K)$ to be the subgroup of $K^\times/(K^\times)^2$ consisting of those cosets whose image in $K_v^\times/(K_v^\times)^2$ lies in the image of $\delta_{E_t',v}$ for all places $v$ of $K$.  The group $\Sel_{\phi_t}(E_t/K)$ is finite and $\delta_{E'_t}$ induces an injective homomorphism
\[
E'_t(K)/\phi_t(E_t(K)) \hookrightarrow \Sel_{\phi_t}(E_t/K).
\]
Similarly, we have a group homomorphism $\delta_{E_t}\colon E_t(K) \to K^\times/(K^\times)^2$ that induces an injective homomorphism
\[
E_t(K)/\hat\phi_t(E'_t(K)) \hookrightarrow \Sel_{\hat\phi_t}(E'_t/K),
\]
where $\hat\phi_t\colon E_t' \to E_t$ is the dual isogeny of $\phi_t$.  Viewing our groups as vector spaces over $\FF_2$, we have 
\[
\rank \, E_t(K) =\dim_{\FF_2} \delta_{E_t}(E_t(K)) + \dim_{\FF_2} \delta_{E_t'}(E_t'(K)) - 2.
\]
cf.~Lemma~\ref{L:rank via quotients}, which gives the upper bound
\[
\rank \, E_t(K) \leq  \dim_{\FF_2} \Sel_{\hat\phi_t}(E'_t/K) + \dim_{\FF_2} \Sel_{\phi_t}(E_t/K) - 2.
\]
The Selmer groups $\Sel_{\hat\phi_t}(E'_t/K)$ and $\Sel_{\phi_t}(E_t/K)$ are in principle computable and hence we have a computable upper bound for the rank of $E_t$.   Unfortunately, the computation of Selmer groups requires knowledge of the primes of bad reduction of $E_t$ which is difficult to control if we are varying over infinitely many $t$.

To control the primes of bad reduction, we will make the additional assumption that $\calB$ is also the the set of points in $\PP^1(\Kbar)$ over which the fiber of $\pi$ is singular.  Equivalently, $\Delta$ factors into linear polynomials in $K[T]$.  

Let $\OO_{K,S}$ be the ring of $S$-integers in $K$ and let $S_\infty$ be the set of archimedean places of $K$.   For each $v\in S$, fix a nonempty open subset $U_v$ of $K_v^2$.  For each place $v\in S_\infty$, fix a nonempty open subset $U_v$ of $K_v$.    We will make use of a result of Kai \cite[Proposition~13.2]{Kai} which implies that there are nonzero $m,n\in \OO_{K,S}$ such that the following hold:
\begin{itemize}
\item
$m-en$ with $e\in \calB$ generate distinct prime ideals of $\OO_{K,S}$,
\item 
$(m,n)$ lies in $U_v$ for all $v\in S$,
\item 
$m/n$ lies in $U_v$ for all $v\in S_\infty$.
\end{itemize}
Kai's theorem is a generalization of work of Green, Tao and Ziegler to number fields.  After possibly increasing $S$ first and taking $t:=m/n\in K$ with $m$ and $n$ as above, $E_t$ has bad reduction at a nonzero prime ideal $\p\notin S$ of $\OO_K$ if and only if $\p\OO_{K,S}=(m-en)\OO_{K,S}$ for some $e\in \calB$.  The type of reduction that $E_t$ has at a prime ideal $\p\in S$ can be forced by making suitable choice of $U_\p$.    By shrinking the sets $U_v$, we may also assume that $\rank\, E_t(K) \geq r$ always holds when $t:=m/n$.

The goal is to possible extend the finite set $S$ and choose sets $\{U_v\}_{v\in S\cup S_\infty}$ appropriately so that, with $m$ and $n$ as above and $t:=m/n$, the local conditions imposed guarantee that the homomorphisms
\begin{align} \label{E:specialization mot1}
E(K(T))\to E_t(K) \xrightarrow{\delta_{E_t}} \Sel_{\hat\phi_t}(E'_t/K)
\end{align}
and
\begin{align}
\label{E:specialization mot2}
E'(K(T))\to E'_t(K) \xrightarrow{\delta_{E'_t}} \Sel_{\phi_t}(E_t/K)
\end{align}
are both surjective.   If (\ref{E:specialization mot1}) and (\ref{E:specialization mot2}) were both surjective, then we would know the image of $\delta_{E_t}$ and $\delta_{E'_t}$ from which we could deduce the upper bound $\rank\, E_t(K)\leq r$ and thus prove that $\rank\, E_t(K)=r$.

However, we still might have $\rank\, E_t(K)= r$ even if one of the homomorphisms (\ref{E:specialization mot1}) or (\ref{E:specialization mot2}) is not surjective;  in this case one can show that $\Sha(E_t/K)[2]\neq 0$ or $\Sha(E'_t/K)[2]\neq 0$.  In \S\ref{S:general}, we give additional conditions on $E$ to ensure this surjectivity and in particular avoid $2$-torsion in our Tate--Shafarevich groups; many of the conditions are constraints on the singular fibers of $\scrE\to \PP^1_K$.   

Let $\calN$ be the \emph{conductor} of the elliptic surface $\pi\colon \scrE\to \PP^1_K$, i.e., the divisor of $\PP_K^1$ supported on the closed points of $\PP^1_K$ over which the fiber of $\pi$ is singular and each such point has multiplicity $1$ if the fiber is multiplicative and multiplicity $2$ if the fiber is additive.  There is a simple upper bound
\[
r=\rank \, E(K(T)) \leq \rank \, E(\Kbar(T)) \leq \deg \calN -4,
\]
cf.~\cite[Corollary~2]{MR1211006}.  In \S\ref{S:general}, we will impose conditions that imply $\rank \, E(K(T))=\deg \calN-4$.  Having ``maximal rank'' will make the proof simpler and in particular make it easier to find $t=m/n$ for which (\ref{E:specialization mot1}) and (\ref{E:specialization mot2}) are surjective.  This is strong constraint on $E$ and it implies that the surface $\scrE$ has geometric genus $0$, cf.~\cite[equation (1.13$'$)]{MR1211006}.  For the explicit $E/K(T)$ we will consider for our application, $\scrE$ will be a rational elliptic surface with a $2$-torsion point and hence $r\leq 4$ by \cite[Corollary~2.1]{MR1104782}.\\

In our proof of Theorem~\ref{T:main}, we will use an explicit elliptic curve $E/K(T)$ for each integer $0\leq r \leq 4$.   For example when $r=4$, we will use the elliptic curve $E$ over $K(T)$ defined by the Weierstrass equation
\[
y^2=x^3-70(T^2-25^2)\cdot x^2+2^4 7^2 (T^2-11^2)(T^2-25^2) \cdot x
\]
whose discriminant is 
\[
\Delta=2^{14}3^2 7^6 (T-11)^2(T+11)^2  (T-25)^3(T+25)^3 (T-39)(T+39).
\]
The set of points of $\PP^1_K$ over which the corresponding elliptic surface $\scrE\to \PP^1_K$ is singular is $\calB:=\{\pm 11, \pm 25, \pm 39\}$.  The conductor of $E$ is $\calN= (-11)+(11)+(-39)+(39)+2\cdot (-25) + 2\cdot (25)$ and hence $\rank\, E(K(T))\leq \deg \calN-4=4$.   In fact, $E(K(T))$ has rank $4$.  Moreover, $E(K(T))$ is isomorphic to $\ZZ/2\ZZ\times \ZZ^4$ and is generated by the $2$-torsion point $(0,0)$ and the following independent points:
\begin{align*}
&(14(T-11)(T+11), 1176(T-11)(T+11)),\\
&(2(T-11)(T-25), 12(3T+65)(T-11)(T-25)),\\
&(14(T-25)(T+25) , 2352(T-25)(T+25)),\\
&(8(T+11)(T+25) , -48(T+11)(T+25)(T-45)).
\end{align*}   
Our proof will show that the elliptic curve $E_t$ over $K$ has rank $4$ for infinitely many $t\in K-\calB$.

\section{Background on descent}
\label{S:background}

We recall some background on descent by a $2$-isogeny.  See \cite[\S X.4]{Silverman}, and in particular \cite[\S X.4 Example 4.8]{Silverman}, for the basics.

\subsection{$2$-descent} \label{SS:2-descent}

Fix a field $K$ of characteristic $0$ and let $\Kbar$ be an algebraic closure.   

Let $E$ be an elliptic curve over $K$ with a fixed $K$-rational point $P_0$ of order $2$.    There is a degree $2$ isogeny $\phi\colon E\to E'$ whose kernel is generated by $P_0$. Let $\hat{\phi}\colon E'\to E$ be the dual isogeny of $\phi$.  We can choose a Weierstrass model for $E$ of the form
\begin{align} \label{E:basic model}
y^2=x^3+a x^2+b x
\end{align}
with $a,b\in K$ and $P_0=(0,0)$.    We may take $E'$ to be the elliptic curve over $K$ defined by the Weierstrass equation $y^2=x^3+a'x^2+b'x$, where $a':=-2a$ and $b':=a^2-4b$, and take $\phi\colon E\to E'$ to be
\[
\phi(x,y)=(y^2/x^2, y(b-x^2)/x^2).
\]  
The kernel of $\hat{\phi}$ is generated by the $2$-torsion point $(0,0)$ of  $E'$.

Set $\Gal_K:=\Gal(\Kbar/K)$.  Starting with the short exact sequence $0\to \ker \phi \to E \xrightarrow{\phi} E'\to 0$ and taking Galois cohomology yields an exact sequence
\[
0 \to \ker \phi \to E(K)\xrightarrow{\phi} E'(K) \xrightarrow{\delta_{E'}} H^1(\Gal_K, \ker \phi).
\]
Since $\ker \phi$ and $\{\pm 1\}$ are isomorphic $\Gal_K$-modules, we have a natural isomorphism
\begin{align} \label{E:H1 isom}
H^1(\Gal_K, \ker \phi )\xrightarrow{\sim} H^1(\Gal_K, \{\pm 1\}) \xrightarrow{\sim} K^\times/(K^\times)^2.
\end{align}
Using the isomorphism (\ref{E:H1 isom}), we may view $\delta_{E'}$ as a homomorphism 
\[
\delta_{E'}\colon E'(K)\to K^\times/(K^\times)^2
\]   
whose kernel is $\phi(E(K))$.  In particular, we can identify $E'(K)/\phi(E(K))$ with a subgroup of $K^\times/(K^\times)^2$.  For any point $(x,y)\in E'(K)-\{0,(0,0)\}$, we have 
\[
\delta_{E'}((x,y))=x\cdot (K^\times)^2.
\] 
We also have $\delta_{E'}(0)=1$ and $\delta_{E'}((0,0))=b' \cdot (K^\times)^2$.   

In the same manner, we obtain a homomorphism $\delta_E\colon E(K)\to K^\times/(K^\times)^2$ with kernel $\hat\phi(E'(K))$ that satisfies $\delta_{E}((x,y))=x\cdot (K^\times)^2$ for $(x,y)\in E(K)-\{0,(0,0)\}$ and $\delta_E((0,0))=b \cdot (K^\times)^2$.  We now show that the images of $\delta_E$ and $\delta_{E'}$ determine the rank of $E$.

\begin{lemma} \label{L:rank via quotients}
If $E(K)$ is a finitely generated abelian group of rank $r$, then 
\[
r=\dim_{\FF_2} \delta_E(E(K)) + \dim_{\FF_2} \delta_{E'}(E'(K)) - 2.
\]
\end{lemma}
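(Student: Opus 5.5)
Since $\ker\delta_E=\hat\phi(E'(K))$ and $\ker\delta_{E'}=\phi(E(K))$, we have
\[
\dim_{\FF_2}\delta_E(E(K))=\dim_{\FF_2} E(K)/\hat\phi(E'(K)), \qquad \dim_{\FF_2}\delta_{E'}(E'(K))=\dim_{\FF_2} E'(K)/\phi(E(K)).
\]
So it suffices to prove the standard identity
\[
\dim_{\FF_2} E(K)/2E(K) = \dim_{\FF_2} E(K)/\hat\phi(E'(K)) + \dim_{\FF_2} E'(K)/\phi(E(K)) - \dim_{\FF_2}\bigl(\ker\hat\phi \cap \phi(E(K))\bigr)\ldots
\]
Rather than chase that form, I will use an index computation. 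Note that $E'(K)$ is finitely generated of the same rank $r$, since $E$ and $E'$ are isogenous.

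The plan uses the general formula for a composition of homomorphisms of abelian groups $f\colon A\to B$, $g\colon B\to C$ with finite kernels and cokernels:
\[
\frac{\#\operatorname{coker}(g\circ f)}{\#\ker(g\circ f)}=\frac{\#\operatorname{coker} f}{\#\ker f}\cdot\frac{\#\operatorname{coker} g}{\#\ker g}.
\]
This follows from the snake lemma, or from the exact sequence $0\to\ker f\to\ker gf\to\ker g\to\operatorname{coker} f\to\operatorname{coker} gf\to\operatorname{coker} g\to 0$. Apply it with $f=\phi\colon E(K)\to E'(K)$ and $g=\hat\phi\colon E'(K)\to E(K)$, so that $g\circ f=[2]$ on $E(K)$.

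The terms are as follows. $\ker\phi$ on $K$-points is $\{0,P_0\}$, of order $2$. Likewise $\ker\hat\phi$ on $K$-points is $\{0,(0,0)\}$, of order $2$, since $(0,0)\in E'(K)$. The coker terms are $\#E'(K)/\phi(E(K))=2^{d'}$ and $\#E(K)/\hat\phi(E'(K))=2^{d}$, where $d,d'$ are the two image dimensions. For multiplication by $2$, write $E(K)\cong \ZZ^r\oplus T$ with $T$ finite. Then $\#E(K)/2E(K)=2^r\cdot\#T/2T$ and $\#E(K)[2]=\#T[2]$. Since $T$ is finite, $\#T/2T=\#T[2]$, so the ratio $\#\operatorname{coker}[2]/\#\ker[2]$ equals $2^r$.

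Plugging in gives $2^r=(2^{d'}/2)(2^{d}/2)$, hence $r=d+d'-2$. The only point needing care is exactness of the six-term sequence and finiteness of all the groups involved. Finiteness holds because the kernels are inside $2$-torsion, and the cokernels are quotients of $E(K)/2E(K)$ and $E'(K)/2E'(K)$, which are finite since the groups are finitely generated. The six-term sequence itself is the routine kernel–cokernel sequence for a composite; I expect no real obstacle.
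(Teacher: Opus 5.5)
Your proof is correct. It uses the same exact sequence as the paper, but you handle the bookkeeping differently.

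The paper uses only the tail of your six-term sequence,
$0\to\langle(0,0)\rangle/\phi(E(K)[2])\to E'(K)/\phi(E(K))\xrightarrow{\hat\phi}E(K)/2E(K)\to E(K)/\hat\phi(E'(K))\to 0$.
It combines this with $\dim_{\FF_2}E(K)/2E(K)=r+\dim_{\FF_2}E(K)[2]$. It then has to compute the first term by hand, $\dim_{\FF_2}\langle(0,0)\rangle/\phi(E(K)[2])=2-\dim_{\FF_2}E(K)[2]$, using that $\phi$ restricted to $E(K)[2]$ has kernel $\langle P_0\rangle$.

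You take the full kernel--cokernel sequence for $\hat\phi\circ\phi=[2]$ and its alternating product of orders. This way you never need to identify $\phi(E(K)[2])$. The $2$-torsion cancels automatically inside $\#\operatorname{coker}[2]/\#\ker[2]=2^r$, and you only need $\#\ker\phi=\#\ker\hat\phi=2$ on $K$-points. The paper's version shows explicitly where the correction term $2-\dim_{\FF_2}E(K)[2]$ comes from; yours is tidier and less error-prone.

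The identity you started and then abandoned is exactly the paper's route, but the correction term there is wrong as written. The kernel of $E'(K)/\phi(E(K))\to E(K)/2E(K)$ is $\langle(0,0)\rangle/(\langle(0,0)\rangle\cap\phi(E(K)))$, and $\langle(0,0)\rangle\cap\phi(E(K))=\phi(E(K)[2])$. So you should subtract $1-\dim_{\FF_2}\phi(E(K)[2])$, not $\dim_{\FF_2}(\ker\hat\phi\cap\phi(E(K)))$.
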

\begin{proof}
Since $E(K)$ is a finitely generated abelian group of rank $r$, the group $E(K)/2E(K)$ is finite and we have $\dim_{\FF_2} E(K)/2E(K) = r + \dim_{\FF_2} E(K)[2]$.  There is an exact sequence 
\begin{align} \label{E:exact MW last}
0\to \frac{\langle (0,0) \rangle}{\phi(E(K)[2])} \to \frac{E'(K)}{\phi(E(K))} \xrightarrow{\hat\phi} \frac{E(K)}{2E(K)} \to \frac{E(K)}{\hat{\phi}(E'(K))} \to 0.
\end{align}
We have $\dim_{\FF_2} {\langle (0,0) \rangle}/{\phi(E(K)[2])}= 1 - (\dim_{\FF_2} E(K)[2]-1)=2-\dim_{\FF_2} E(K)[2]$ and hence from (\ref{E:exact MW last}) we find that $\dim_{\FF_2} E(K)/2E(K)$ is equal to
\[
\dim_{\FF_2} {E(K)}/{\hat\phi(E'(K))} + \dim_{\FF_2} {E'(K)}/{\phi(E(K))} - (2-\dim_{\FF_2} E(K)[2]).
\]
The lemma follows by combining our two expressions for $\dim_{\FF_2} E(K)/2E(K)$ and using the isomorphisms $\delta_{E}(E(K))\cong{E(K)}/{\hat\phi(E'(K))}$ and $\delta_{E'}(E'(K))\cong{E'(K)}/{\phi(E(K))}$.
\end{proof}

\subsection{Number field case}
Now assume further that $K$ is a number field.  Take any place $v$ of $K$.  Applying the construction of \S\ref{SS:2-descent} with $E$ base changed to $K_v$ we obtain a homomorphism
\[
\delta_{E',v} \colon E'(K_v) \to K_v^\times/(K_v^\times)^2
\]
with kernel $\phi(E(K_v))$.   We can identify the \defi{$\phi$-Selmer group} of $E/K$, which we denote by $\Sel_\phi(E/K)$, with the subgroup of $K^\times/(K^\times)^2$ consisting of those square classes whose image in $K_v^\times/(K_v^\times)^2$ lies in $\operatorname{Im}(\delta_{E',v})$ for all places $v$ of $K$.  The image of $\delta_{E'}$ lies in $\Sel_\phi(E/K)$ and hence we have an injective homomorphism
\[
E'(K)/\phi(E(K)) \hookrightarrow \Sel_\phi(E/K).
\]
Similar, we have an injective homomorphism $E(K)/\hat\phi(E'(K)) \hookrightarrow \Sel_{\hat\phi}(E'/K)$.   These Selmer groups are finite and  are in principle computable.  Using Lemma~\ref{L:rank via quotients}, these Selmer groups give an upper bound on the rank of $E$.  The following lemma links their cardinalities.

\begin{lemma} \label{L:Selmer ratio}
\begin{romanenum}
\item \label{L:Selmer ratio i}
We have 
\[
|\Sel_\phi(E/K)|/|\Sel_{\hat\phi}(E'/K)| = \prod_v \tfrac{1}{2} |\operatorname{Im}(\delta_{E',v})|,
\] 
where the product is over the places $v$ of $K$.
\item \label{L:Selmer ratio ii}
Let $\p$ be a nonzero prime ideal of $\OO_K$ that does not divide $2$.   Then 
\[
\tfrac{1}{2} |\operatorname{Im}(\delta_{E',\p})| = c_\p(E')/c_\p(E),
\] 
where $c_\p(E)$ and $c_\p(E')$ are the Tamagawa numbers of $E$ and $E'$, respectively, at $\p$. 
\end{romanenum}
\end{lemma}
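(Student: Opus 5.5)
Part (i) is the standard Cassels-type Selmer ratio formula for isogenies, and the plan is to derive it from Poitou--Tate duality, or equivalently from the Greenberg--Wiles formula. Let $M=\ker\phi\cong\{\pm1\}$ and $M'=\ker\hat\phi\cong\{\pm1\}$. Then $\Sel_\phi(E/K)$ is the Selmer group of $M$ with local conditions $\mathcal L_v=\operatorname{Im}(\delta_{E',v})\subseteq H^1(K_v,M)$, and $\Sel_{\hat\phi}(E'/K)$ is the Selmer group of $M'$ with conditions $\mathcal L'_v=\operatorname{Im}(\delta_{E,v})$.

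The first step is to check that $\mathcal L_v$ and $\mathcal L'_v$ are exact annihilators of each other under the local Tate pairing $H^1(K_v,M)\times H^1(K_v,M')\to \tfrac12\ZZ/\ZZ$ induced by the Weil pairing $e_\phi$. This is the standard compatibility of $\delta$ with the Tate pairing (Tate local duality for abelian varieties, applied to the isogeny $\phi$). The Greenberg--Wiles formula then gives
\[
\frac{|\Sel_\phi(E/K)|}{|\Sel_{\hat\phi}(E'/K)|}=\frac{|H^0(K,M)|}{|H^0(K,M')|}\prod_v\frac{|\mathcal L_v|}{|H^0(K_v,M)|}.
\]
Since $|H^0(K,M)|=|H^0(K,M')|=2$ and $|H^0(K_v,M)|=2$ for every $v$, this is exactly $\prod_v\tfrac12|\operatorname{Im}(\delta_{E',v})|$.

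It also needs checking that the product is really finite. At places $v\nmid 2\infty$ where $E$ has good reduction, $\mathcal L_v$ is the unramified subgroup $\OO_v^\times/(\OO_v^\times)^2$, which has order $2$. Alternatively this follows from part (ii), since $c_\p(E)=c_\p(E')=1$ at such $\p$.

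For part (ii), take $\p\nmid 2$ and write $\mathcal L_\p=E'(K_\p)/\phi(E(K_\p))$. The plan is to compute its order through the Schaefer-type formula
\[
|E'(K_\p)/\phi(E(K_\p))|=\frac{|E(K_\p)[\phi]|}{\|\phi'(0)\|_\p^{-1}}\cdot\frac{c_\p(E')}{c_\p(E)}.
\]
This formula comes from the snake lemma applied to $\phi$ on the filtrations $E_1(K_\p)\subseteq E_0(K_\p)\subseteq E(K_\p)$, together with the isogeny-induced map on Néron models. The ingredients are:
\begin{itemize}
\item On the formal group, $\phi$ is an isomorphism on $E_1$ because $\deg\phi=2$ is a unit at $\p$, so the factor $\|\phi'(0)\|_\p$ equals $1$. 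This uses a choice of minimal models; the normalization of $\phi'(0)$ has unit valuation since $2\in\OO_\p^\times$.
\item On the reduction $\widetilde E_{ns}(k)$, the map is between finite groups of equal order (isogenous curves over a finite field have the same number of smooth points on their reductions), so its kernel and cokernel have the same size.
\item $E(K_\p)[\phi]$ has order $2$.
\end{itemize}
Combining these gives $|\mathcal L_\p|=2\,c_\p(E')/c_\p(E)$, which is part (ii).

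The main obstacle is the snake-lemma bookkeeping across the three layers, in particular making sure the kernel of $\phi$ on $E_0$ versus $E$ is accounted for correctly. The cleanest route is to take the general formula $|\operatorname{coker}|/|\ker|$ of $\phi$ on $E(K_\p)$ to be $c_\p(E')/c_\p(E)\cdot\|\phi'(0)\|_\p^{-1}$, computed as a product of the index ratios at each layer. Since $|\ker|=2$, part (ii) follows immediately.
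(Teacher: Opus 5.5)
Your proof is correct. It differs from the paper mainly in being self-contained: the paper gives no argument here and imports both parts, via Lemma~2.1 of an earlier paper of the author, from Cassels' Selmer-ratio formula and from the Dokchitsers' results on local invariants of isogenous curves.

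Your route reconstructs those two inputs.
\begin{itemize}
\item For (i), the Greenberg--Wiles formula, applied to the Cartier-dual pair $E[\phi]$, $E'[\hat\phi]$ with mutually annihilating Kummer images, is the general form of Cassels' formula. Your observation that $|H^0(K_v,E[\phi])|=2$ at \emph{every} place, archimedean ones included, is what produces the uniform factor $\tfrac12$. This is also the source of the factor $\tfrac12$ at complex places used later in Lemma~\ref{L:balanced product}.
\item For (ii), your argument along the filtration $E_1\subseteq E_0\subseteq E$ proves directly the local identity $|\operatorname{coker}\phi|/|\ker\phi|=c_\p(E')/c_\p(E)$ that the paper takes from the literature.
\end{itemize}
The citation buys brevity. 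Your version shows that $\p\nmid 2$ is used only on $E_1$, and that the $K$-rationality of $P_0$ is what gives $|E(K_\p)[\phi]|=2$.

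There are three details to tidy.
\begin{itemize}
\item Your displayed Schaefer-type formula has the formal-group factor inverted. It should read $|E(K_\p)[\phi]|\cdot\|\phi'(0)\|_\p^{-1}\cdot c_\p(E')/c_\p(E)$, as in your final paragraph. This is harmless here, since the factor is $1$.
\item Bijectivity of $\phi\colon E_1(K_\p)\to E'_1(K_\p)$ is cleanest as follows: $\hat\phi\circ\phi$ and $\phi\circ\hat\phi$ are multiplication by $2$ on pro-$p$ groups with $p$ odd, hence bijective. This avoids any normalization of $\phi'(0)$ via minimal models.
\item In the bad-reduction case, the equality $|\widetilde E_{ns}(k)|=|\widetilde E'_{ns}(k)|$ is not an instance of point-count invariance for isogenous elliptic curves over $k$. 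Instead, note that isogenous curves have the same reduction type (additive, split multiplicative, or nonsplit multiplicative). So both groups are $k^+$, both are $k^\times$, or both are the norm-one torus of order $|k|+1$.
\end{itemize}
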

\begin{proof}
This was shown in \cite[Lemma~2.1]{Zyw25c}.  Part (\ref{L:Selmer ratio i}) was a consequence of a result of Cassels \cite{MR179169} and part (\ref{L:Selmer ratio ii}) was deduced from \cite{MR3324930}.
\end{proof}

\subsection{Some local computations} \label{SS:function field case}

Let $R$ be a complete discrete valuation ring and let $K$ be its fraction field.   Let $\m$ be the maximal ideal of $R$ and let $k=R/\m$ be the residue field.  Assume that $k$ does not have characteristic $2$.

Now fix an elliptic curve $E$ over $K$ as in \S\ref{SS:2-descent}.  With notation as in \S\ref{SS:2-descent}, we have an elliptic curve $E'$ over $K$ and a homomorphism 
\[
\delta_{E'}\colon E'(K)\to K^\times/(K^\times)^2.
\]
We can identify $R^\times/(R^\times)^2$ with a subgroup of $K^\times/(K^\times)^2$.  Using the discrete valuation ring $R$, we can apply Tate's algorithm \cite[Algorithm 9.4]{SilvermanII} to the curves $E$ and $E'$ to obtain Kodaira symbols.   

\begin{lemma} \label{L:HP inclusion}
Suppose that $E$ and $E'$ have Kodaira symbol $\operatorname{I}_{2n}$ and $\operatorname{I}_n$, respectively, for some $n\geq 0$. 
\begin{romanenum}
\item \label{L:HP inclusion i}
If $E$ has split multiplicative reduction or $n$ is odd, then $\Im(\delta_{E'})=1$.
\item \label{L:HP inclusion ii}
If $E$ and $E'$ have good reduction, i.e., $n=0$, then $\Im(\delta_{E'}) \subseteq R^\times/(R^\times)^2$.
\end{romanenum}
\end{lemma}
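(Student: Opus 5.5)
For part (\ref{L:HP inclusion i}), I would use the standard local formula for the cokernel of an isogeny over a complete discretely valued field whose residue characteristic does not divide its degree. The argument behind Lemma~\ref{L:Selmer ratio}(\ref{L:Selmer ratio ii}) is purely local, so it applies verbatim to $R$ (its completeness is all that is used). It gives
\[
|\Im(\delta_{E'})| = |E'(K)/\phi(E(K))| = |\ker\phi(K)|\cdot c(E')/c(E) = 2\, c(E')/c(E),
\]
where $c(\cdot)$ denotes the Tamagawa number computed from Tate's algorithm over $R$. If this cannot be quoted directly, I would rederive it. Compare the exact sequences $0\to E_0(K)\to E(K)\to \Phi_E(k)\to 0$ for $E$ and $E'$. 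Since $2$ is prime to the residue characteristic, $\phi$ induces an isomorphism on the formal groups $E_1(K)\to E'_1(K)$. Then track kernels and cokernels on the identity components (via the reduction map to $\tilde{E}_{ns}(k)$) and on the component groups. This bookkeeping is the only real work in (\ref{L:HP inclusion i}), and I expect it to be the main obstacle if the formula must be proved from scratch.

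Granting the formula, (\ref{L:HP inclusion i}) is a computation with Tamagawa numbers. Note $n\geq 1$ in either case, since split multiplicative reduction forces $n\geq 1$. First suppose $E$ has split multiplicative reduction. Then $E'$, being isogenous, is also split, so $c(E)=2n$, $c(E')=n$, and $|\Im(\delta_{E'})|=1$. Next suppose $E$ is nonsplit and $n$ is odd. Then $c(E)=2$, since $\operatorname{I}_{2n}$ nonsplit has even index, and $c(E')=1$, since $\operatorname{I}_n$ nonsplit has odd index. Again $|\Im(\delta_{E'})|=1$.

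For part (\ref{L:HP inclusion ii}) I would argue cohomologically. Assume $E$ and $E'$ have good reduction. Then $\phi$ extends to an isogeny of the Néron models over $R$. This extension is étale because its degree $2$ is invertible on $R$. Let $K^{ur}$ be the maximal unramified extension with ring of integers $R^{ur}$. I claim $\phi\colon E(K^{ur})\to E'(K^{ur})$ is surjective. On reductions, the map $\tilde E(\bar k)\to\tilde E'(\bar k)$ is a nonconstant isogeny of curves over an algebraically closed field, hence surjective. On the formal groups, $\phi$ is an isomorphism since $2$ is a unit. Hence for any $P'\in E'(K)$ we can choose $Q\in E(K^{ur})$ with $\phi(Q)=P'$. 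The cocycle $\sigma\mapsto \sigma(Q)-Q$ representing $\delta_{E'}(P')$ is then trivial on the inertia group. Under the isomorphism (\ref{E:H1 isom}), unramified classes in $H^1(\Gal_K,\{\pm1\})$ correspond to square classes $u(K^\times)^2$ with $K(\sqrt u)/K$ unramified. Since the residue characteristic is not $2$, these are exactly the classes in $R^\times/(R^\times)^2$, giving $\Im(\delta_{E'})\subseteq R^\times/(R^\times)^2$.

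As a sanity check for (\ref{L:HP inclusion ii}), one can argue with the explicit formula $\delta_{E'}((x,y))=x(K^\times)^2$ on a minimal integral model, by showing that $v(x)$ is even for every point. However, the cohomological route avoids a case analysis on $v(b')$.
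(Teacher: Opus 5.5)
\textbf{The gap is in part (i).} You assert that $|E'(K)/\phi(E(K))| = 2\,c(E')/c(E)$ holds ``verbatim'' over any complete $R$, with only completeness used. That is not true. The local-field proof uses finiteness of $k$, at exactly the step you defer: it needs $\phi$ on $\tilde{E}_{\mathrm{ns}}(k)$ (equivalently on $E_0(K)$) to have kernel and cokernel of equal order. In this lemma $R$ is an arbitrary complete DVR. The paper applies it to completions of $K(T)$ at points of $\PP^1_K$, whose residue field is the number field $K$, and there the formula genuinely fails. For example, take $E$ and $\phi$ defined over $k$ and base change to $k((T))$. Then $E'(K)/\phi(E(K))\cong E'(k)/\phi(E(k))$, which can have order larger than the predicted $2$. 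So the ``bookkeeping'' on identity components is not routine; it needs an extra input that your sketch does not supply.

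\textbf{What is missing for $n\geq 1$} is that $\phi$ maps $E_0(K)$ bijectively onto $E_0'(K)$. Given that, the snake lemma yields your formula and the Tamagawa computation finishes. The bijectivity can be shown in four steps.
\begin{itemize}
\item $\phi$ is an isomorphism on $E_1$, as you note.
\item On the minimal model $y^2=x^3+ax^2+bx$ with $v(b)=n\geq 1$, the point $(0,0)$ reduces to the node, so $\ker\phi\cap E_0(K)=0$.
\item The induced isogeny of one-dimensional tori $\tilde{E}_{\mathrm{ns}}\to\tilde{E}'_{\mathrm{ns}}$ has degree $1$ or $2$, because $\hat\phi\phi=[2]$.
\item Degree $2$ is impossible. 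It would kill the rational $2$-torsion point $-1$ of the torus, and that point lifts to a nonzero element of $\ker\phi(K)\cap E_0(K)$.
\end{itemize}
In the split case you can also see this from the Tate parametrization. There $\phi$ becomes the quotient map $K^\times/q^{\ZZ}\to K^\times/q_0^{\ZZ}$ with $q_0^2=q$, which is visibly surjective.

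\textbf{Comparison with the paper.} The paper avoids all of this with an explicit computation. It writes $x=dz^2$ and completes the square to get $dw^2=(dz^2-a)^2-4b$. This forces any nontrivial class in the image to equal $a\cdot(K^\times)^2$, which is then ruled out case by case. That argument is valid for any residue field of characteristic $\neq 2$.

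Your part (ii) is correct and takes a different route from the paper's valuation case analysis. You note that $\phi$ extends to an \'etale isogeny of abelian schemes over $R$. Hence every $P'\in E'(K)$ has a $\phi$-preimage over an unramified extension, so $\delta_{E'}(P')$ is an unramified class, i.e.\ lies in $R^\times/(R^\times)^2$. This is cleaner than the paper's argument. Two small corrections are needed: the residue field of $K^{ur}$ is $k^{\mathrm{sep}}$, not $\bar{k}$, and the formal-group lifting should be done over finite unramified extensions, which are complete, whereas $K^{ur}$ is not.
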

\begin{proof}
Let $v\colon K \to \ZZ \cup \{+\infty\}$ be the discrete valuation normalized so that $v(K^\times)=\ZZ$.  Choose an element $\pi\in R$ for which $v(\pi)=1$.  Since $R$ is a complete discrete valuation ring and $k$ has characteristic not equal to $2$, we find that an element of $R^\times$ is a square if and only if its image in $k$ is a square.

We may choose a model (\ref{E:basic model}) for $E/K$ with coefficients in $R$ so that it is a minimal Weierstrass model over $R$.   The curve $E'/K$ is given by the model $y^2=x^3-2ax+(a^2-4b)x$.  By our assumptions on the Kodaira symbols, we have $n=v(b)$ and $a^2-4b \in R ^\times$.   In particular, $(a^2-4b)\cdot (K^\times)^2$ lies in $R^\times/(R^\times)^2\subseteq K^\times/(K^\times)^2$.   

Suppose that $n\geq 1$.  We have $v(a)=0$ since otherwise $v(a^2-4b)>0$.  Observe that $a^2-4b$ is a square in $K^\times$ since $a^2-4b$ lies in $R^\times$ and $a^2-4b\equiv a^2 \pmod{\m}$.  Therefore, $(a^2-4b)\cdot (K^\times)^2=1$ when $n\geq 1$.

Now take any $\alpha \in \Im(\delta_{E'})-\{1,(a^2-4b) \cdot (K^\times)^2\}$.  We have $\alpha=d\cdot (K^\times)^2$ for some $d\in K^\times$ that satisfies $v(d)\in \{0,1\}$.  Therefore, $\alpha=\delta_{E'}((x,y))$ for some $(x,y)\in E'(K)$ where $x=dz^2$ with $z\in K^\times$. We have $y^2=x^3-2ax^2+(a^2-4b)x$ and hence
\begin{align}\label{E:sieve}
w^2 = d z^4 -2a z^2 + (a^2-4b)/d
\end{align}
for some $w\in K$.   Since $d$ is not a square in $K$ we deduce that $d$ is not square modulo $\m$ when $v(d)=0$.

Suppose that $v(z)<0$.    We have $v(dz^4)=4v(z) + v(d) \leq 4 v(z) + 1$.  Therefore, $v(dz^4) < 2v(z)\leq v(-2az^2)$ and $v(dz^4) < - 1 \leq  v((a^2-4b)/d)$.   From (\ref{E:sieve}) and these valuations, we deduce that $2v(w)=v(w^2)$ is equal to $v(dz^4)=4v(z) + v(d)$.  This gives a contradiction when $v(d)$ is odd, so we have $v(d)=0$ and $v(w)=2v(z)$.   With $e:=-v(z)\geq 1$, we have $z=\pi^{-e} z_0$ and $w=\pi^{-2e} w_0$ with $w_0,z_0\in R^\times$.  Multiplying both sides of (\ref{E:sieve}) by $\pi^{4e}$ and reducing modulo $\m$, we deduce that $w_0^2\equiv d z_0^4 \pmod{\m}$.    Thus $d$ is a square modulo $\m$ which is a contradiction.  

Suppose that $v(z)\geq 0$ and $v(d)=1$.  From (\ref{E:sieve}) we deduce that $2v(w)=v(w^2)$ equals $v((a^2-4b)/d)=-v(d)=-1$ which gives a contradiction.

Suppose that $v(z)>0$ and $v(d)=0$. From (\ref{E:sieve}) we deduce that $2v(w)=v(w^2)$ equals $v((a^2-4b)/d)=-v(d)=0$.  So $w\in R^\times$ and $w^2\equiv (a^2-4b)/d \pmod{\m}$.  Since $(a^2-4b)/d$ lies in $R^\times$ and is a square modulo $\m$, we deduce that $(a^2-4b)/d$ is a square in $K$.   However, this is impossible since $\alpha=d\cdot (K^\times)^2$ was chosen not to be equal to $(a^2-4b)\cdot (K^\times)^2$.

From the above cases, we find that $v(z)=0$ and $v(d)=0$.    Since $d\in R^\times$, we have $\alpha \in R^\times/(R^\times)^2$.  This completes the proof of (\ref{L:HP inclusion ii}).  We may now assume that $n\geq 1$ and hence $v(a)=0$.    Multiplying (\ref{E:sieve}) by $d$ and completing the square gives
\begin{align} \label{E:sieve 2}
dw^2=(dz^2 -a)^2 -4b.
\end{align}
We have $w\in R$ and reducing gives $dw^2 \equiv (dz^2 -a)^2 \pmod{\m}$.  Since $d$ is not a square modulo $\m$, we must have $dz^2-a\equiv 0 \pmod{\m}$.   The congruence $dz^2\equiv a \pmod{\m}$ and $v(d)=v(a)=0$ implies that $\alpha=d \cdot (K^\times)^2= a \cdot (K^\times)^2$.   

Suppose that $E$ has split reduction.   Since $v(a)=0$, this implies that $a$ is a square modulo $\m$ and hence $a$ is a square in $K$.   We have $\alpha= a \cdot (K^\times)^2=1$ which contradicts our choice of $\alpha$.

Suppose that $n$ is odd.  Since $v(b)=n$ is odd and $v(d)=0$, (\ref{E:sieve 2}) implies that the even integers $v(dw^2)$ and $v((dz^2 -a)^2)$ agree.  So there is an integer $e$ such that $w=\pi^e w_0$ and $dz^2-a=\pi^e u_0$ with $u_0,w_0\in R^\times$.  Dividing (\ref{E:sieve 2}) by $\pi^{2e}$ gives $dw_0^2=u_0^2-4b/\pi^{2e}$.  Since $v(b)$ is odd, this implies that $dw_0^2\equiv u_0^2 \pmod{\m}$ which contradicts that $d$ is not a square modulo $\m$.

In the setting of (\ref{L:HP inclusion i}), we have now proved that $\delta_{E'}$ has trivial image.
\end{proof}

\section{General theorem}
\label{S:general}

Let $K$ be a number field.  Let $E$ be an elliptic curve defined over the function field $K(T)$ with a fixed point $P_0\in E(K(T))$ of order $2$.   There is a degree $2$ isogeny $\phi\colon E \to E'$ whose kernel is generated by $P_0$.   Using the isogeny $\phi$ and its dual, we obtain group homomorphisms
\[
\delta_{E}\colon E(K(T))\to K(T)^\times/(K(T)^\times)^2\quad \text{ and }\quad \delta_{E'}\colon E'(K(T))\to K(T)^\times/(K(T)^\times)^2
\]
as in \S\ref{SS:2-descent}.

We will identify $K(T)$ with the function field of $\PP^1_K=\Spec K[T] \cup \{\infty\}$.  For a closed point $P$ of $\PP^1_K$, let $v_P$ be the corresponding discrete valuation of $K(T)$.  Let $K_P$ be the completion of $K$ with respect to $v_P$ and let $\OO_P$ be its valuation ring.   After base extending our curves $E$ and $E'$ to $K_P$ we can apply Tate's algorithm to obtain Kodaira symbols and Tamagawa numbers $c_P(E)$ and $c_P(E')$.

Let $\calB$ be the set of closed points of $\PP^1_K$ over which $E$ has bad reduction.   Let $\calA$ be the set of points in $\calB$ over which $E$ has additive reduction.   Let $\calM$ be the set of $P\in \calB-\calA$ for which the the Kodaira symbol of $E$ and $E'$ at $P$ is $\operatorname{I}_{2n}$ and $\operatorname{I}_n$, respectively, for some $n\geq 1$.
Let $\calM'$ be the set of $P\in \calB-\calA$ for which the the Kodaira symbol of $E$ and $E'$ at $P$ is $\operatorname{I}_{n}$ and $\operatorname{I}_{2n}$, respectively, for some $n\geq 1$.
We have a disjoint union
\[
\calB = \calA \cup \calM \cup \calM'.
\]

We also impose the following additional conditions on $E$ and $E'$: 
\begin{alphenum} 
\item \label{I:new a}
Every point in $\calB$ has degree $1$; equivalently, $\calB$ can be viewed as a subset of $\PP^1(K)$.
\item \label{I:new b}
The sets $\calM$ and $\calM'$ are nonempty.
\item \label{I:new c}
For every point $P\in \calM$, the curve $E'$ has split multiplicative reduction at $P$ or its Kodaira symbol at $P$ is $\operatorname{I}_n$ for some odd $n$.
\item \label{I:new d}
For every point $P\in \calM'$, the curve $E$ has split multiplicative reduction at $P$ or its Kodaira symbol at $P$ is $\operatorname{I}_n$ for some odd $n$.
\item \label{I:new e}
If the Kodaira symbol of $E$ at a point $P\in \calA$ is $\operatorname{I}_n^*$ for some $n\geq 0$, then $c_P(E)=c_P(E')=4$.
\item \label{I:new f}
We have $\dim_{\FF_2} \delta_E(E(K(T))) \geq |\calA|+|\calM|-1$.
\item \label{I:new g}
We have $\dim_{\FF_2} \delta_{E'}(E'(K(T))) \geq |\calA|+|\calM'|-1$.
\end{alphenum}

Define the integer
\[
r:=2|\calA|+|\calM|+|\calM'|-4.
\]
By the Lang--N\'eron theorem, we know that $E(K(T))$ is a finitely generated abelian group.  By Lemma~\ref{L:rank via quotients} with conditions (\ref{I:new f}) and (\ref{I:new g}), we find that $\rank\, E(K(T)) \geq r$.  However, the rank of $E(K(T))$ is at most $r$ by \cite[Corollary~2]{MR1211006} and (\ref{I:new a}).  Therefore,
\[
r=\rank\, E(K(T)).
\]
Given an explicit Weierstrass model of $E$, we can specialize it at all but finitely many $t\in K$ to obtain an elliptic curve $E_t$ over $K$.    Our main result is the following theorem which will be proved in \S\ref{S:proof of general}.

\begin{thm} \label{T:general}
With assumptions as above, there are infinitely many $t\in K$ for which $E_t(K)$ has rank $r$.   In particular, there are infinitely many elliptic curves over $K$, up to isomorphism over $\Kbar$, that have rank $r$.
\end{thm}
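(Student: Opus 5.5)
We follow the strategy sketched in \S\ref{S:ideas}. By condition (\ref{I:new a}), after a change of coordinate on $\PP^1_K$ and of Weierstrass model, we assume that $\infty\notin\calB$ and that $\calB\subseteq\OO_K$. We also take a model $y^2=x^3+ax^2+bx$ with $a,b\in \OO_K[T]$, minimal over $K[T]$, whose discriminant $\Delta$ splits into linear factors $T-e$ with $e\in\calB$. We choose a finite set $S$ of nonzero primes with the following properties: $S$ contains the primes above $2$; $\OO_{K,S}$ has trivial class group and each $e-e'$ (for $e\neq e'$ in $\calB$) is an $S$-unit; the leading coefficients and all constants in $\Delta$ are $S$-units; the finitely many generators of $E(K(T))$ and $E'(K(T))$ have $S$-integral coefficients; and the fibre at $\infty$ stays good modulo every $\p\notin S$. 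We then take $t=m/n$ with $m,n\in\OO_{K,S}$ as produced by Kai's result \cite[Proposition~13.2]{Kai}, so that the elements $m-en$, for $e\in\calB$, generate distinct primes $\q_e$ of $\OO_{K,S}$, and $(m,n)$ and $m/n$ lie in chosen open sets $U_v$ for $v\in S\cup S_\infty$. For $\p\notin S$, the curve $E_t$ has bad reduction exactly at the $\q_e$. Moreover, the Kodaira type of $E_t$ (and of $E'_t$) at $\q_e$ equals that of $E$ (and $E'$) at $e$, since $v_{\q_e}(t-e)=1$ and the model is locally a base change of a model over the local ring at $e$. Split versus nonsplit reduction is governed by whether a fixed function (for instance $a$ modulo $T-e$) is a square, which for $\q_e$ depends on residues modulo $\q_e$. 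Shrinking the $U_v$ also ensures that specialization $E(K(T))\to E_t(K)$ is injective (Silverman), so $\rank E_t(K)\geq r$, and that the local images $\Im(\delta_{E_t,v})$ and $\Im(\delta_{E'_t,v})$ for $v\in S\cup S_\infty$ are locally constant in $t$. Here the $v$-adic openness of the points $(m,n)$ lets us fix these local images to whatever they are at some fixed auxiliary $t_0$.

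The goal is to show that $\Sel_{\phi_t}(E_t/K)$ has dimension at most $|\calA|+|\calM'|-1$ and that $\Sel_{\hat\phi_t}(E'_t/K)$ has dimension at most $|\calA|+|\calM|-1$. Combining this with conditions (\ref{I:new f}) and (\ref{I:new g}), the injectivity of specialization (which also respects $\delta$), and Lemma~\ref{L:rank via quotients}, we get $\rank E_t(K)\leq r$. To bound the Selmer groups, we first use Lemma~\ref{L:HP inclusion}(\ref{L:HP inclusion ii}) at the good primes outside $S$: every Selmer class is then represented by an element of $\OO_{K,S}$ whose support lies among the $\q_e$, i.e. by an element of the group generated by $\OO_{K,S}^\times$ and the elements $m-en$. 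At $\q_e$ with $e\in\calM$, conditions (\ref{I:new c}) and (\ref{I:new d}), transported to $\q_e$, together with Lemma~\ref{L:HP inclusion}(\ref{L:HP inclusion i}), force $\Im(\delta_{E'_t,\q_e})=1$ for the appropriate isogeny. The analogous statement holds at $\q_e$ with $e\in\calM'$ for the dual isogeny. So at these primes the Selmer classes must be locally trivial, which is a quadratic-residue condition modulo $\q_e$. At additive primes, condition (\ref{I:new e}) together with Lemma~\ref{L:Selmer ratio}(\ref{L:Selmer ratio ii}) shows that the local images have size $2$. This is exactly what makes the global Selmer ratio from Lemma~\ref{L:Selmer ratio}(\ref{L:Selmer ratio i}) equal to $2^{|\calM|-|\calM'|}$ times a factor from $S$, which is controlled by the choice of $U_v$. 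Because of this, it suffices to bound one of the two Selmer groups and deduce the other from the ratio.

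The main obstacle is the last step: showing that the classes coming from $E(K(T))$ and $E'(K(T))$ exhaust the Selmer groups. Each candidate class is $u\prod_e (m-en)^{\epsilon_e}$, and the local condition at each $\q_{e'}$ becomes a Legendre symbol involving $(m-en)$ modulo $(m-e'n)$, i.e. $\bigl(\tfrac{(e'-e)n}{\q_{e'}}\bigr)$ up to known factors. By quadratic reciprocity in $K$ (Hilbert symbols, with the contributions at $S\cup S_\infty$ fixed by $U_v$), each such symbol is determined by local data at $S$. This is why we enlarge $S$ enough to include primes dividing all $e-e'$ and choose the $U_v$ so that these symbols take prescribed values. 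I would choose these prescribed values to match those of an explicit generic point, namely $t$ near a point $t_0$ where the $\delta$-images of the specialized generators are linearly independent and all other classes fail some local condition. I would also use the $S$-unit part and the archimedean conditions to kill the classes of $u\in\OO_{K,S}^\times$ not in the image. Once the Selmer groups are pinned down, $\rank E_t(K)=r$. As $t$ ranges over infinitely many such values, the discriminant primes $\q_e$ vary, so infinitely many $j$-invariants occur ($E$ is nonisotrivial and each $j$-value has finitely many preimages), which gives the second assertion.
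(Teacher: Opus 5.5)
Your setup, and the idea of bounding only one Selmer group and recovering the other from Cassels' ratio, match the paper. The final step, however, is where all the difficulty lies, and it does not work as proposed.

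\textbf{Reciprocity does not fix the individual symbols.} You claim reciprocity determines each symbol $\legendre{(e'-e)n}{\q_{e'}}$ from data at $S\cup S_\infty$; this is false. The factor $\legendre{e'-e}{\q_{e'}}$ is controlled because $e'-e$ is an $S$-unit, but $n$ is not an $S$-unit. The product formula for the Hilbert symbol $(n,m-e'n)$ gives
$\legendre{n}{\q_{e'}}=\prod_{v\in S\cup S_\infty}(n,m-e'n)_v\cdot\prod_{\p\mid n,\,\p\notin S}\legendre{m}{\p}^{v_\p(n)}$.
The last product is a Jacobi-type symbol of $m$ modulo $n$, and it is not controlled by the local conditions Kai's theorem lets you impose. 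Equivalently, reciprocity for the pair $(m-en,m-e'n)$ pins down only the product $\legendre{m-en}{\q_{e'}}\legendre{m-e'n}{\q_e}$, never each factor.

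\textbf{The reference point $t_0$ is circular.} Producing a $t_0$ at which ``all other classes fail some local condition'' is the theorem itself. Moreover, a $t$ close to $t_0$ at $S\cup S_\infty$ has different bad primes outside $S$, so it does not inherit $t_0$'s Selmer group.

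\textbf{The $S$-part is not actually controlled.} Shrinking the $U_v$ for a fixed $S$ only freezes the local images $\Im(\delta_{E'_t,v})$ at whatever values they happen to take. Nothing forces $\prod_{v\in S\cup S_\infty}\tfrac12|\Im(\delta_{E'_t,v})|=1$, which you need for your Selmer ratio to be exact. Nothing rules out nontrivial $S$-unit classes either.

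The paper supplies exactly these missing ingredients. In Lemma~\ref{L:balance} it enlarges $S$ by auxiliary primes, found by weak approximation and Chebotarev. At these primes $t$ is forced $\p$-adically near a point of $\calM$ or of $\calM'$, so the local image is either all of $K_v^\times/(K_v^\times)^2$ or trivial. This makes $\OO_{K,S}^\times/(\OO_{K,S}^\times)^2\to\prod_{v\in S\cup S_\infty}(K_v^\times/(K_v^\times)^2)/\Phi_v$ an isomorphism. That kills the $S$-unit classes and, by a count, gives $\prod\tfrac12|\Phi_v|=1$.

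The paper also never evaluates symbols among the $\p_e$. The classes $f(t)$ with $f\in\scrF'$ already lie in the Selmer group because they come from global points. So it suffices to show that every Selmer class is some $f(t)$ times a unit, and for this Kai's theorem imposes conditions at two extra primes.
\begin{itemize}
\item At $\q_2$: one has $v_{\q_2}(m-e'n)=1$ and $n$ a nonsquare, the local image is trivial, and all $S\cup\{\q_1\}$-units are squares. Hence each relevant $m-en\equiv(e'-e)n$ is a nonsquare there, so the $\p_e$-support of a Selmer class must have even size. Incidentally, even support is exactly when your uncontrolled factors $\legendre{n}{\q_e}$ cancel.
\item At $\q_1=\pi\OO_K$, with $\pi$ a square at every $v\in S\cup S_\infty$: the opposite local behavior restores the Selmer ratio. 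The one possible new class $\pi$ is then killed by reciprocity at a single $\p_{e'}$, using that $m-e'n$ is a nonsquare modulo $\q_1$.
\end{itemize}
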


\section{Proof of Theorem~\ref{T:general}} \label{S:proof of general}

Set $L:=K(T)$.   After possibly replacing $T$ by another element that generates the field $L$ over $K$, we may assume without loss of generality that  
\[
\calB \subseteq \OO_K\subseteq K \cup \{\infty\} = \PP^1(K).
\]
We can choose a Weierstrass model 
\begin{align} \label{E:basic eqn}
y^2=x^3+ax^2+bx
\end{align}
of $E$ with $a,b\in \OO_K[T]$ so that $P_0=(0,0)$ is our fixed $2$-torsion point.  We may further assume that the Weierstrass model (\ref{E:basic eqn}) is minimal over the PID $K[T]$.     From \S\ref{SS:2-descent}, we may assume that $E'$ is given by the Weierstrass model $y^2=x^3-2a x^2 + (a^2-4b) x$.   The discriminant of our models for $E$ and $E'$ are $ \Delta=16(a^2-4b)b^2$ and $\Delta'=16(a^2-4b)^2b$, respectively.  

Using the minimality of this model for $E$, we can now explicitly describe the sets $\calB$, $\calA$, $\calM$ and $\calM'$ in terms of $a$ and $b$. From condition (\ref{I:new a}) and $\calB\subseteq \OO_K$,  we find that $\calB$ is the set of roots in $\Kbar$ of the polynomial $\Delta$.   The set $\calM$ is equal to the set of roots of $b$ that are not roots of $a^2-4b$.  The set $\calM'$ is equal to the set of roots of $a^2-4b$ that are not roots of $b$.   The set $\calA$ is equal to the set of common roots of $a^2-4b$ and $b$.   

Note that with these Weierstrass equations, the specializations $E_t$ and $E'_t$ are well-defined for all $t\in K-\calB$.

\subsection{Geometric Mordell--Weil}

We can choose points $e_0\in \calM$ and $e_0'\in \calM'$ by condition (\ref{I:new b}).  Let $\scrF$ be the set of squarefree polynomials $f\in K[T]$ that divide $b$, have even degree, and satisfy $f(e_0')=1$.    Let $\scrF'$ be the set of squarefree polynomials $f\in K[T]$ that divide $a^2-4b$, have even degree, and satisfy $f(e_0)=1$.  We can now describe the image of $\delta_E$ and $\delta_{E'}$. 

\begin{lemma} \label{L:geometric ranks}
\begin{romanenum}
\item  \label{L:geometric ranks i}
We have $\delta_{E}(E(K(T)))=\{ f\cdot (K(T)^\times)^2 : f\in \scrF\}$.
\item   \label{L:geometric ranks ii}
We have $\delta_{E'}(E'(K(T)))=\{ f\cdot (K(T)^\times)^2 : f\in \scrF'\}$.
\end{romanenum}
\end{lemma}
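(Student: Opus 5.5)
The plan is to prove (i) in detail; (ii) follows by the same argument with the roles of $E$ and $E'$ swapped, using $\phi$ and $\hat\phi$ and exchanging $b$ with $a^2-4b$ and $e_0'$ with $e_0$. The strategy is to show the inclusion $\delta_E(E(K(T)))\subseteq\{f\cdot (K(T)^\times)^2: f\in\scrF\}$ by local analysis at every closed point of $\PP^1_K$. Then we compare dimensions using condition (\ref{I:new f}).

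\textbf{Upper bound.} Take $\alpha\in\delta_E(E(K(T)))$; we work with $\delta_E$ on $E(L)$, whose kernel is $\hat\phi(E'(L))$. For each closed point $P$, the image of $\alpha$ in $K_P^\times/(K_P^\times)^2$ lies in $\Im(\delta_{E,P})$ for the completion at $P$. We apply Lemma~\ref{L:HP inclusion} with the roles of $E,E'$ exchanged, i.e.\ to the isogeny $\hat\phi$.
\begin{itemize}
\item At points of good reduction for $E$ (including $\infty$), part (\ref{L:HP inclusion ii}) gives that $\alpha$ is a unit class, so $v_P(\alpha)$ is even.
\item At $P\in\calM'$, the symbols of $E',E$ are $\operatorname{I}_{2n},\operatorname{I}_n$. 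Condition (\ref{I:new d}) and part (\ref{L:HP inclusion i}) give that $\alpha$ is trivial locally at $P$. In particular $\alpha$ is a unit class at $P$ and is a square in the residue field $K$ at $e_0'$.
\end{itemize}
Hence we may write $\alpha=c f$ with $c\in K^\times$ and $f$ monic squarefree, where $f$ is supported on the roots of $\Delta$ not in $\calM'$, i.e.\ on roots of $b$. Thus $f\mid b$, and $f$ has even degree because $v_\infty(\alpha)$ is even. Moreover $c f(e_0')$ is a square in $K$, so after adjusting by a square we may renormalize $c f$ so that its value at $e_0'$ is $1$. The point is that $\alpha$ is represented by a polynomial $g=cf$, $g(e_0')$ is a nonzero square, and scaling by $g(e_0')^{-1}$ gives an element of $\scrF$.

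\textbf{Counting.} The set $\scrF$ is in bijection with the even-cardinality subsets of the roots of $b$, which is $\calA\cup\calM$ (using condition (\ref{I:new a}) so all roots are in $K$). Distinct elements give distinct square classes in $K(T)$. So the right-hand side has $\FF_2$-dimension $|\calA|+|\calM|-1$. Combined with (\ref{I:new f}) and the inclusion, we get equality.

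\textbf{Main obstacle.} The key step is the local analysis at $\infty$ and at the points of $\calM'$. The difficulty is that Lemma~\ref{L:HP inclusion} is stated for $\delta_{E'}$ with $E,E'$ having symbols $\operatorname{I}_{2n},\operatorname{I}_n$, and we must check that the symmetric version applies to $(E',E,\hat\phi)$. This requires rewriting $E'$ in the form $y^2=x^3+a'x^2+b'x$, where the dual curve is $y^2=x^3-2a'x^2+(a'^2-4b')x$, which is isomorphic to $E$ via $x\mapsto x/4$; that scaling does not change square classes. At $\infty$ we need $E$ to have good reduction so that the model can be taken minimal there after a change of variable; this holds because $\calB\subseteq\OO_K$. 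We also need both $E$ and $E'$ to have good reduction there, which is fine since they are isogenous.
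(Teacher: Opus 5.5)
Your proposal is correct and is essentially the paper's proof. In both, Lemma~\ref{L:HP inclusion} applied at the good-reduction points (including $\infty$) and at the points of $\calM'$ forces the squarefree representative to divide $b$, to have even degree, and to take a square value at $e_0'$; condition (\ref{I:new f}) together with $|\scrF|=2^{|\calA|+|\calM|-1}$ then gives equality. Your explicit justification for applying Lemma~\ref{L:HP inclusion} to $\hat\phi$, by identifying the dual of $E'$ with $E$ via $x\mapsto x/4$, spells out a step the paper leaves implicit. The same holds for the fact, used tacitly by both you and the paper, that split multiplicative reduction at $P\in\calM'$ passes from $E$ to the isogenous curve $E'$, which is what the swapped lemma actually requires.
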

\begin{proof}
Take any $\alpha\in \delta_E(E(K(T))$.   We have $\alpha = f \cdot  (K(T)^\times)^2$ for some squarefree polynomial $f\in K[T]$.   Take any closed point $P$ of $\PP^1_K$ that is not in $\calA\cup \calM$.  If $P\notin \calM'$, then $E$ and $E'$ base extended to $K(T)_P$ both have good reduction.   If $P\in \calM'$, then condition (\ref{I:new d}) implies that $E$ base extended to $K(T)_P$ has split multiplicative reduction or has Kodaira symbol $\operatorname{I}_n$ for some odd $n$.   Lemma~\ref{L:HP inclusion} thus implies that $v_P(f)$ is even for all closed points $P\notin \calA\cup \calM$.     Since $K[T]$ is a PID and $f$ is squarefree, we deduce that $f$ divides $b$; recall the set of roots of $b$ is $\calA\cup \calM$. Since $\infty \in \PP^1(K)$ does not lie in $\calA\cup \calM\cup \calM'=\calB$, $v_\infty(f)$ being even implies that $f$ has even degree.   Therefore, $f=cf_1$ for a unique polynomial $f_1\in \scrF$ and constant $c\in K^\times$.   Lemma~\ref{L:HP inclusion}(\ref{L:HP inclusion i}) implies that $f\cdot (K(T)_{e_0'}^\times)^2=1$ and hence $f(e_0')=c$ is a square in $K$.  Therefore, $\alpha=f \cdot (K(T)^\times)^2 = f_1 \cdot (K(T)^\times)^2$ with $f_1\in \scrF$.  We have proved the inclusion 
\[
\delta_E(E(K(T))) \subseteq\{ f\cdot (K(T)^\times)^2 : f\in \scrF\}=:G.
\]   
It is actually an equality since $|G|=2^{|\calA|+|\calM|-1}$ and $|\delta_E(E(K(T))) |\geq 2^{|\calA|+|\calM|-1}$ by condition (\ref{I:new f}).  This proves (\ref{L:geometric ranks i}).   Part (\ref{L:geometric ranks ii}) is proved in the same way but making use of condition (\ref{I:new g}).
\end{proof}

We now consider specializations.

\begin{lemma} \label{L:geometric inclusion}
There is a finite set $D\subseteq K$ with $\calB \subseteq D$ such that the inclusions
\[
\delta_{E_t}(E_t(K))\supseteq \{ f(t)\cdot (K^\times)^2 : f\in \scrF\}\quad \text{ and }\quad  \delta_{E'_t}(E'_t(K))\supseteq \{ f(t)\cdot (K^\times)^2 : f\in \scrF'\}
\]
hold for all $t\in K-D$.
\end{lemma}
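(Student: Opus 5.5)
The plan is to specialize explicit points of $E(K(T))$ realizing each class in $\scrF$, and to check that the relevant $x$-coordinates specialize compatibly. By Lemma~\ref{L:geometric ranks}(\ref{L:geometric ranks i}), for each $f\in\scrF$ we can pick a point $P_f\in E(K(T))$ with $\delta_E(P_f)=f\cdot(K(T)^\times)^2$. Since $\scrF$ is finite, it suffices to handle each $f$ separately and let $D$ be the finite union of the exceptional sets. The same argument, with Lemma~\ref{L:geometric ranks}(\ref{L:geometric ranks ii}) and $\scrF'$, gives the statement for $E'$.

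Fix $f\in\scrF$ and $P=P_f$, and split into three cases. If $P=0$, then $f$ is a constant square, so $f=1$ by $f(e_0')=1$, and $1\cdot(K^\times)^2$ lies in every image. If $P=(0,0)$, then $\delta_E(P)=b\cdot(K(T)^\times)^2$, so $b=fg^2$ for some $g\in K(T)^\times$. For $t$ outside the zeros and poles of $f$ and $g$ (and outside $\calB$), we get $\delta_{E_t}((0,0))=b(t)(K^\times)^2=f(t)(K^\times)^2$.

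Otherwise $P=(x,y)$ with $x,y\in K(T)$ and $x\ne0$, and $x=fg^2$ for some $g\in K(T)^\times$. Let $D_f$ be the finite set consisting of $\calB$ together with the zeros and poles of $x$, $y$, $f$ and $g$. For $t\in K-D_f$ the specialization $P_t=(x(t),y(t))$ is a well-defined point of $E_t(K)$, because the Weierstrass equation specializes. It differs from $0$ and $(0,0)$ because $x(t)\neq 0,\infty$. Hence
\[
\delta_{E_t}(P_t)=x(t)\cdot(K^\times)^2=f(t)g(t)^2\cdot(K^\times)^2=f(t)\cdot(K^\times)^2.
\]
Taking $D=\calB\cup\bigcup_{f\in\scrF}D_f\cup\bigcup_{f\in\scrF'}D'_f$ gives the required finite set. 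Every $f(t)$ with $t\notin D$ is nonzero, so each class $f(t)\cdot(K^\times)^2$ makes sense.

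There is no real obstacle here; the argument is bookkeeping. The one point needing care is that $\delta$ is computed by the formula $x\cdot(K^\times)^2$ both over $K(T)$ and over $K$. That requires $P_t$ to avoid $0$ and $(0,0)$, which is exactly why the zeros and poles of $x$ are excluded. We do not need injectivity of specialization (Silverman's theorem), since only the inclusions are claimed.
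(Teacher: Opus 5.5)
Your proof is correct and follows the paper's argument essentially verbatim: pick preimages under $\delta_E$ and $\delta_{E'}$ via Lemma~\ref{L:geometric ranks}, split into the cases $P=0$, $P=(0,0)$ and $x\neq 0$, and specialize after excluding finitely many $t$. One small bookkeeping fix is needed: put only the poles of $y$ into $D_f$, as the paper does, and not its zeros, since if $P_f$ is a $2$-torsion point $(x,0)$ with $x\neq 0$ (which can occur only if $E$ has full $2$-torsion over $K(T)$), the zero set of $y$ is all of $K$ and $D_f$ would not be finite.
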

\begin{proof}
Let $D$ be a finite subset of $K$ containing $\calB$ that we can extend a finitely number of times.  Take any $t\in K-D$.

Take any $f\in \scrF$.  We have $f(t)\neq 0$ since $t\notin \calB$ and $f$ divides $\Delta$.  By Lemma~\ref{L:geometric ranks}(\ref{L:geometric ranks i}), there is a point $P\in E(K(T))$ such that $\delta_E(P)=f\cdot (K(T)^\times)^2$.   If $P=0$, then $f$ is a square in $K(T)$  and hence $f(t)\cdot (K^\times)^2= (K^\times)^2$ is an element of $\delta_{E_t}(E_t(K))$.  Suppose that $P=(0,0)$ and hence $f\cdot (K(T)^\times)^2= \delta_E(P) = b\cdot (K(T)^\times)^2$.  Since the roots of $f$ and $b$ both lie in $\calB$, we have $f(t)\cdot (K^\times)^2 = b(t)\cdot (K^\times)^2 = \delta_{E_t}((0,0))$ which proves that $f(t)\cdot (K^\times)^2\in \delta_{E_t}(E_t(K))$.  Now suppose that $P=(x,y)$ with $x,y\in K(T)$ and $x\neq 0$.  We have $x=f z^2$ for some nonzero $z\in K(T)$.   After increasing the finite set $D$, we may assume that $z$ and $y$ have no poles at $t$ and that $z(t)\neq 0$.  Therefore, $P_t:=(x(t),y(t))$ is a point in $E_t(K)$ and $\delta_{E_t}(P_t)= x(t) \cdot (K^\times)^2 = f(t) \cdot (K^\times)^2$.

Since $\scrF$ is finite, after increasing the finite set $D$ appropriately we will have $f(t)\cdot (K^\times)^2 \in \delta_{E_t}(E_t(K))$ for all $t\in K-D$ and all $f\in \scrF$.  This proves the first inclusion of the lemma.   The second inclusion is proved in the exact same way by making use of Lemma~\ref{L:geometric ranks}(\ref{L:geometric ranks ii}).  
\end{proof}

\subsection{Choice of primes} \label{SS:choice of primes}

In this section, we construct a set $S\cup \{\q_1,\q_2\}$ of nonzero prime ideals of $\OO_K$ that satisfy various properties.  These primes will be used in \S\ref{SS:choice of elliptic curve} to choose a specialization $E_t$ for which we can compute the appropriate Selmer groups and prove that $E_t$ has rank $r$.

Let $S_0$ be a finite set of nonzero prime ideals of $\OO_K$ that contains all those that divide $6 \prod_{\alpha,\beta\in \calB, \alpha\neq \beta} (\alpha-\beta) \in \OO_K$ or divide the leading coefficient of $\Delta$.  By adding prime ideals to the finite set $S_0$, we may further assume that $\OO_{K,S_0}$ is a PID.

\begin{lemma} \label{L:sieve input}
For all $P\in \calB$ and all but finitely many nonzero prime ideals $\p\notin S_0$ of $\OO_K$, if $t\in K$ satisfies $v_\p(t-P)=1$, then 
\[
\tfrac{1}{2} |\Im(\delta_{E'_t,\p})| = 
\begin{cases}
2 & \text{ if $P\in \calM$,}\\
\tfrac{1}{2} & \text{ if $P\in \calM'$,}\\
1 & \text{ if $P\in \calA$.}
\end{cases}
\]
\end{lemma}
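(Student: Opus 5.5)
We use Lemma~\ref{L:Selmer ratio}(\ref{L:Selmer ratio ii}), which applies because $\p\notin S_0$ and so $\p\nmid 2$. It reduces the claim to computing the ratio of Tamagawa numbers $c_\p(E'_t)/c_\p(E_t)$ when $v_\p(t-P)=1$. We will show that, for all but finitely many $\p$, the Kodaira symbols and Tamagawa numbers of $E_t$ and $E'_t$ at $\p$ match those of $E$ and $E'$ over $K(T)_P$, where the uniformizer $T-P$ is replaced by $t-P$. The ratio will then be $2$, $\tfrac12$ or $1$ according as $P\in\calM$, $\calM'$ or $\calA$.

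\textbf{Step 1: integrality and valuations.} Write $a,b\in\OO_K[T]$ and factor $b=c\prod_{e}(T-e)^{m_e}$ and $a^2-4b=c'\prod_e(T-e)^{m'_e}$. Here $c,c'$ lie in the leading-coefficient part of $\Delta$, and the roots $e\in\calB\subseteq\OO_K$. Since $v_\p(t-P)=1$, the element $t$ is $\p$-integral. For $e\neq P$ we have $v_\p(t-e)=v_\p(P-e)=0$, because $\p\notin S_0$ does not divide $P-e$. The constants $c,c'$ are $\p$-units. Hence
\[
v_\p(b(t))=m_P=v_P(b)\quad\text{and}\quad v_\p(a(t)^2-4b(t))=m'_P=v_P(a^2-4b).
\]
We also need the residues to behave the same way. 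Write $a=a_0+a_1(T-P)+\cdots$ as a Taylor expansion at $P$ with coefficients in $\OO_K$. Then $v_\p(a(t))\ge\min(v_\p(a_0),1)$, and $v_\p(a(t))=v_P(a)$ as soon as $a_0\ne 0$ is a $\p$-unit. Similarly, the leading Taylor coefficients of $b$ and $a^2-4b$ at $P$ are $\p$-units. Excluding finitely many $\p$ makes all of these nonzero elements of $\OO_K$ units, for every $P\in\calB$. The model $y^2=x^3+a(t)x^2+b(t)x$ then has the same valuation data at $\p$ as the model of $E$ at $P$, with the residue field $K$ replaced by $\FF_\p$ and the constants reduced mod $\p$.

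\textbf{Step 2: Tate's algorithm.} For $y^2=x^3+ax^2+bx$ with residue characteristic not $2$, Tate's algorithm depends only on these valuations together with certain square and splitting conditions on reductions of leading coefficients. Two examples are whether $a_0$ is a square, which decides split versus nonsplit, and, for $\operatorname{I}_n^*$, the factorization of an auxiliary quadratic or cubic. We treat the cases in turn.
\begin{itemize}
\item For $P\in\calM$, the curves $E_t$ and $E'_t$ have types $\operatorname{I}_{2n}$ and $\operatorname{I}_n$ at $\p$, with $v_\p(a(t))=0$. Condition~(\ref{I:new c}) says $E'$ is split or $n$ is odd. We must transfer this: splitness of $E'$ at $P$ means $-2a_0$ is a square in $K$, and then its image is a square in $\FF_\p$. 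Then $c_\p(E'_t)=n$ (or $2$ when $n$ is odd and the reduction is nonsplit), while $c_\p(E_t)\in\{2n,2\}$ accordingly. Going through the split and nonsplit cases gives the ratio $2$ in every instance, by the same computation as over $K(T)_P$.
\item The case $P\in\calM'$ is symmetric, using condition~(\ref{I:new d}), and gives $\tfrac12$.
\item For $P\in\calA$, the additive types are determined by the valuations. For types other than $\operatorname{I}_n^*$, the Tamagawa numbers of isogenous curves of additive type with $\p\nmid 6$ are equal, or can be read off directly; we expect ratio $1$, matching the function-field side. For $\operatorname{I}_n^*$, condition~(\ref{I:new e}) gives $c_P(E)=c_P(E')=4$. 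This means the relevant auxiliary polynomials split completely over $K$, with roots in $\OO_K$ after excluding finitely many primes. Their reductions split over $\FF_\p$, so $c_\p(E_t)=c_\p(E'_t)=4$.
\end{itemize}

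\textbf{Main obstacle.} The hard part is transferring splitting data from the residue field $K$ at $P$ to $\FF_\p$ uniformly over the finitely many $P$. Squareness and splitting over $K$ imply the same over $\FF_\p$ once finitely many bad primes are removed, and that is all we need. The converse fails, since nonsquares in $K$ may become squares mod $\p$, so the argument must only ever push splitting in the direction $K\Rightarrow\FF_\p$. Conditions~(\ref{I:new c})--(\ref{I:new e}) are exactly designed for this. For example, with $n$ odd and $\operatorname{I}_n$, the Tamagawa number is $1$ or $n$, and the ratio computation against $\operatorname{I}_{2n}$ (with $c=2$ or $2n$) must be checked to yield $2$ regardless of splitness; it does, because $\operatorname{I}_{2n}$ nonsplit gives $2$ and $\operatorname{I}_n$ nonsplit with $n$ odd gives $1$.
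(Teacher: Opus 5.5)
\textbf{The multiplicative cases fail, and the statement as printed has $\calM$ and $\calM'$ interchanged.} Your strategy is the paper's. You reduce via Lemma~\ref{L:Selmer ratio}(ii) to $c_\p(E'_t)/c_\p(E_t)$. You discard finitely many primes so that the Kodaira symbols, split/nonsplit behaviour and the $\operatorname{I}_n^*$ splitting data carry over from $P$ to $\p$, always in the direction $K\Rightarrow\FF_\p$. Then you go case by case. The transfer step and the $\operatorname{I}_n^*$ case are fine in outline. For the other additive types, ``we expect ratio $1$'' needs an actual argument, e.g.\ Table~1 of \cite{MR3324930} as in the paper.

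The genuine failure is in the multiplicative cases. The ratio required is $c_\p(E'_t)/c_\p(E_t)$, and you compute its inverse. For $P\in\calM$, $E_t$ has type $\operatorname{I}_{2n}$ and $E'_t$ has type $\operatorname{I}_n$. Under condition (c) the ratio is therefore $n/(2n)=\tfrac12$ in the split case. In the nonsplit case with $n$ odd it is $1/2$, since a nonsplit $\operatorname{I}_n$ with $n$ odd has $c=1$, not $2$ as you wrote. Your own split-case values $c_\p(E'_t)=n$, $c_\p(E_t)=2n$ already give $\tfrac12$. Your closing check ``$2$ over $1$'' is $c_\p(E_t)/c_\p(E'_t)$. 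Symmetrically, for $P\in\calM'$ the correct value is $2$, not $\tfrac12$.

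No argument can rescue this step, because the printed values are wrong. The paper's own proof makes the identical slip: it asserts $c_\p(E'_t)/c_\p(E_t)=2$ when $\kappa=\operatorname{I}_{2n}$, $\kappa'=\operatorname{I}_n$. You can confirm this without Tamagawa numbers:
\begin{itemize}
\item At $P\in\calM$, Lemma~\ref{L:HP inclusion}(i) says exactly that $\Im(\delta_{E'})$ is trivial.
\item At $P\in\calM'$ with $n=v_P(a^2-4b)$ odd, we have $\delta_{E'_t,\p}((0,0))=(a^2-4b)(t)\cdot(K_\p^\times)^2$, whose $\p$-adic valuation is $n$, hence odd. So $\Im(\delta_{E'_t,\p})\neq 1$, contradicting the printed value $\tfrac12$. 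This case does occur: $P=39$ in the rank~$4$ family has $n=1$.
\end{itemize}

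The correct lemma gives $\tfrac12$ on $\calM$, $2$ on $\calM'$ and $1$ on $\calA$. The later steps need the matching interchange of $\calM$ and $\calM'$:
\begin{itemize}
\item in Lemma~\ref{L:balance};
\item in Proposition~\ref{P:additive}(c)--(d);
\item in Lemma~\ref{L:final sieve ingredients}, whose part (iii) should read $2^{|\calM'|-|\calM|}$.
\end{itemize}
This is in fact what Lemma~\ref{L:Selmer computation} and Lemma~\ref{L:almost rank} require. In Lemma~\ref{L:Selmer computation} the Selmer classes must be supported on $\calA\cup\calM'$ to match $\scrF'$.

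A smaller inaccuracy: at $P\in\calM$ the node of $E'$ sits at $x=a_0$. So $E'$ is split there iff $a_0$, not $-2a_0$, is a square; $-2a_0$ is the criterion at points of $\calM'$. This does not affect the transfer argument.
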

\begin{proof}
Take any $P\in \calB$.  Take any nonzero prime ideal $\p\notin S_0$ of $\OO_K$ and $t\in K$ so that $v_\p(t-P)=1$.  For each $e\in \calB-\{P\}$, we have $v_\p(t-e)=0$ since otherwise $v_\p(P-e)>0$ which contradicts $\p\notin S_0$.  In particular, $t\notin \calB$.  By enlarging $S_0$ appropriately, we will have $v_\p(\Delta(t))=v_P(\Delta)$ and $v_\p(\Delta'(t))=v_P(\Delta')$.  Since $\p\nmid 2$, we have 
\[
\tfrac{1}{2} |\operatorname{Im}(\delta_{E_t',\p})| = c_\p(E'_t)/c_\p(E_t),
\] 
by Lemma~\ref{L:Selmer ratio}(\ref{L:Selmer ratio ii}).

We can apply Tate's algorithm \cite[Algorithm 9.4]{SilvermanII} to $E$ over $K(T)_P$; it produces a minimal Weierstrass model over $\OO_P$, using the uniformizer $\pi:=T-P$, that satisfies various properties before the algorithm completes.   Substituting $t$ for $T$, and by increasing the finite set $S_0$ appropriately in a way that does not depend on $t$, the assumption $v_\p(t-P)=1$ ensures that the model obtained is a minimal model for $E_t$ at $\p$ and that $E$ and $E_t$ have the same Kodaira symbol $\kappa$ at $P$ and $\p$, respectively.    Similarly by increasing $S_0$, we may also assume that $E'$ and $E'_t$ have the same Kodaira symbol $\kappa'$ at $P$ and $\p$, respectively.   

Consider the case where $\kappa\in \{\operatorname{II}, \operatorname{III}, \operatorname{IV}, \operatorname{II}^*, \operatorname{III}^*, \operatorname{IV}\}$ and hence $E_t$ has potentially good reduction at $\p$.  By \cite[Table~1]{MR3324930},  we have $c_\p(E'_t)/c_\p(E_t)=1$. 

Consider the case where $\kappa=\operatorname{I}_n^*$ for some $n\geq 0$.   Since $E$ and $E'$ are $2$-isogenous to each other, we find that $\kappa'=\operatorname{I}_m^*$ for some $m\geq 0$.  By condition (\ref{I:new e}), we have $c_P(E)=c_P(E')=4$.  In Tate's algorithm, the conditions $c_P(E)=4$ and $c_P(E')=4$ are equivalent to certain polynomials with coefficients in $\OO_P$ having distinct roots and splitting completely when reduced modulo the maximal ideal $\m_P$ of $\OO_P$.   Substituting $t$ for $T$ we deduce that $c_\p(E_t)=4$ and $c_\p(E'_t)=4$ after possibly enlarging $S_0$.  Therefore, $c_\p(E'_t)/c_\p(E_t)=1$. 

We have now verified that $\tfrac{1}{2} |\operatorname{Im}(\delta_{E_t',\p})| =1$ when $P\in \calA$.  It remains to consider the cases where $P\in \calM\cup\calM'$.

Consider the case $P\in \calM'$.  We have $\kappa=\operatorname{I}_n$ and $\kappa'=\operatorname{I}_{2n}$ for some $n\geq 1$; the integer $n$ is the $\p$-adic valuation of the discriminant of a Weierstrass  model of $E_t$ that is minimal at $\p$.  By condition (\ref{I:new d}), $E$ has split reduction at $P$ or $n$ is odd.  After possibly increasing the set $S_0$, this implies that $E_t$ has split reduction at $\p$ or $n$ is odd.    By \cite[Table~1]{MR3324930},  we have $c_\p(E'_t)/c_\p(E_t)=1/2$. 

Finally consider the case $P\in \calM$.  We have $\kappa=\operatorname{I}_{2n}$ and $\kappa'=\operatorname{I}_{n}$ for some $n\geq 1$; the integer $n$ is the $\p$-adic valuation of the discriminant of a Weierstrass model of $E'_t$ that is minimal at $\p$.  By condition (\ref{I:new c}), $E'$ has split reduction at $P$ or $n$ is odd.  After possibly increasing the set $S_0$, this implies that $E'_t$ has split reduction at $\p$ or $n$ is odd.    By \cite[Table~1]{MR3324930},  we have $c_\p(E'_t)/c_\p(E_t)=2$. 
\end{proof}

\begin{lemma} \label{L:good reduction at infinity}
For all but finitely many nonzero prime ideals $\p\notin S_0$ of $\OO_K$, if $t\in K-\calB$ satisfies $v_\p(t)<0$, then $E_t$ and $E_t'$ both have good reduction at $\p$.
\end{lemma}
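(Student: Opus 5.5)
The point $\infty\in\PP^1_K$ does not lie in $\calB\subseteq \OO_K$. So $E$ has good reduction at $\infty$ over $K(T)$, and therefore so does $E'$. The plan is to exploit this by changing coordinates to $U:=1/T$, writing down a Weierstrass model that is integral at $U=0$ with discriminant a unit there, and then specializing at $u=1/t$, which satisfies $v_\p(u)>0$ whenever $v_\p(t)<0$.

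\textbf{Step 1: a model at infinity.} The model $y^2=x^3+ax^2+bx$ is minimal over $K[T]$, and the condition at $\infty$ is a degree constraint. With $\Delta=16(a^2-4b)b^2$, good reduction at $\infty$ means there is an integer $k$ with $\deg a\le 2k$, $\deg b\le 4k$ and $\deg\Delta=12k$. Substituting $x=T^{2k}X$ and $y=T^{3k}Y$ gives the model
\[
Y^2=X^3+A(U)X^2+B(U)X,
\]
where $A(U)=U^{2k}a(1/U)$ and $B(U)=U^{4k}b(1/U)$ lie in $\OO_K[U]$. Its discriminant is $\tilde\Delta(U)=U^{12k}\Delta(1/U)\in\OO_K[U]$. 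The constant term $\tilde\Delta(0)$ equals the leading coefficient of $\Delta$, which is nonzero. The same construction applied to $E'$ uses $A'=-2A$ and $B'=A^2-4B$, and its discriminant $\tilde\Delta'(U)=U^{12k}\Delta'(1/U)$ has nonzero constant term equal to the leading coefficient of $\Delta'$.

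\textbf{Step 2: specialization.} Exclude the finitely many primes dividing $2$, the leading coefficient of $\Delta$, or the leading coefficient of $\Delta'$. Let $t\in K-\calB$ satisfy $v_\p(t)<0$ for one of the remaining primes $\p$, and put $u=1/t$, so that $v_\p(u)\ge1$. Then $A(u),B(u)\in\OO_{K,\p}$ because these polynomials have $\OO_K$-coefficients. Moreover $\tilde\Delta(u)\equiv\tilde\Delta(0)\pmod{\p}$, which is a $\p$-unit.

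Next, the specialized model $Y^2=X^3+A(u)X^2+B(u)X$ is isomorphic to $E_t$ over $K$ via $x=t^{2k}X$ and $y=t^{3k}Y$. Hence $E_t$ has a $\p$-integral model with unit discriminant, and so $E_t$ has good reduction at $\p$. The identical argument with $\tilde\Delta'$ shows that $E'_t$ has good reduction at $\p$.

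\textbf{Main obstacle.} The only delicate point is Step 1: confirming that good reduction at $\infty$ forces exactly the degree bounds $\deg a\le 2k$, $\deg b\le 4k$ and $\deg\Delta=12k$. Because the model has the special shape with zero $a_4$-type and $a_6$ terms, I would verify this by applying the minimal-model criterion at $v_\infty$ directly.

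If an admissible change of variables at $\infty$ were needed that does not preserve this shape, I would avoid the issue altogether: take any Weierstrass model of $E$ over $\OO_K[1/T]$ that is minimal at $\infty$ with unit discriminant, clear denominators in finitely many coefficients, and specialize that model instead. Good reduction is a statement about the existence of some integral model with unit discriminant, so the $2$-isogeny shape is not needed for this lemma. Either way, only finitely many primes are excluded, and the excluded set depends only on $E$, not on $t$.
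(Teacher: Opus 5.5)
Your proof is correct and is essentially the paper's argument. Both pass to $U=T^{-1}$, use a Weierstrass model integral at $U=0$ whose discriminant $\tilde\Delta$ satisfies $\tilde\Delta(0)\neq 0$, discard the primes dividing $\tilde\Delta(0)$, and specialize at $t^{-1}\in\p\OO_\p$. The paper takes an abstract model $y^2=x^3+cx+d$ minimal over $K[U]$ and gets $E'_t$ from the isogeny. Your explicit rescaling $x=T^{2k}X$, $y=T^{3k}Y$ instead makes the isomorphism with $E_t$ visible and shows that only primes already in $S_0$ need be discarded.

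The degree bounds you flagged do hold. First, $12\mid\deg\Delta$, because discriminants of two models differ by a $12$th power and a minimal model at $\infty$ has unit discriminant. Second, $c_4=16(A^2-3B)$ and $c_6=-32A(2A^2-9B)$ are integral at $\infty$, since $v_\infty(\tilde\Delta)=0$ and $j$ is integral. This forces $v_\infty(A),v_\infty(B)\geq 0$: if $v_\infty(A)=-m<0$, then $v_\infty(B)=-2m$ and hence $v_\infty(c_6)=-3m$, a contradiction.
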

\begin{proof}
Set $U:=T^{-1}$.  We can choose a Weierstrass model $y^2=x^3+cx+d$ for $E$, with $c,d\in \OO_K[U]$, that is minimal over the PID $K[U]$.  Let $\tilde{\Delta}\in \OO_K[U]$ be the discriminant of this model.   Since $E$ has good reduction at $\infty\in \PP^1(K)$, we find that $U\nmid \tilde{\Delta}$.  Take any $\p\notin S_0$ that does not divide $\tilde{\Delta}(0)\in \OO_K$.  Take any $t\in K-\calB$ with $v_\p(t)<0$.  Substituting $U$ by $t^{-1}\in \p\OO_p$, we obtain a Weierstrass model for $E_t$ with coefficients in $\OO_\p$ whose discriminant $\tilde{\Delta}(t^{-1})$ lies in $\OO_\p^\times$.  Therefore, $E_t$ has good reduction at $\p$.   The curve $E'_t$ also has good reduction at $\p$ since it is isogenous to $E_t$.
\end{proof}

By increasing the finite set $S_0$ we shall now assume that the conclusions of Lemmas~\ref{L:sieve input} and \ref{L:good reduction at infinity} hold for all nonzero prime ideals $\p\notin S_0$ of $\OO_K$.  Let $S_\infty$ be the set of infinite places of $K$.

\begin{lemma}  \label{L:balance}
There is a finite set $S\supseteq S_0$ of nonzero prime ideals of $\OO_K$ such that for each  $v\in S\cup S_{\infty}$ we have a nonempty open subset $U_v$ of $K_v$ and a subgroup $\Phi_{v} \subseteq K_v^\times/(K_v^\times)^2$ for which the following hold:
\begin{alphenum}
\item \label{L:balance a}
For each $v \in S\cup S_\infty$ and $t\in K \cap U_v$, we have $t\notin D$ and  $\Im(\delta_{E'_t,v}) = \Phi_v$.
\item \label{L:balance b}
The natural map
\[
\OO_{K,S}^\times/(\OO_{K,S}^\times)^2 \to \prod_{v\in S\cup S_\infty} (K_v^\times/(K_v^\times)^2)/\Phi_v
\]
 is a group isomorphism.
\item
\label{L:balance c}
For each $v\in S$ dividing $2$, $U_v\cap \OO_v = \emptyset$.
\end{alphenum}
\end{lemma}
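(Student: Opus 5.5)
The plan is to build $S$ in stages, starting from $S_0$ enlarged to contain every prime above $2$ and to make $\OO_{K,S}$ a PID. At each $v\in S\cup S_\infty$ we pick a small open $U_v$ on which $\Im(\delta_{E'_t,v})$ is constant. The isomorphism in (\ref{L:balance b}) is then reduced to a dimension count plus injectivity, and injectivity is forced by adding auxiliary primes chosen by Chebotarev.

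\emph{Local constancy.} For each $v\in S_\infty$ and each $v\in S$ we choose $t_v\in K_v-D$; at $v\mid 2$ we take $v(t_v)<0$, which will give (\ref{L:balance c}). Since $D$ is finite, a small enough neighbourhood $U_v$ of $t_v$ avoids $D$. The curve $E'_t$ depends continuously on $t$, so for $U_v$ small all $E'_t$ with $t\in U_v$ are $K_v$-isomorphic compatibly with $(0,0)$. Concretely, the Weierstrass coefficients move by amounts that are squares (or $\pm$ squares at real places) times the original ones. Hence $\Im(\delta_{E'_t,v})$ equals a fixed subgroup $\Phi_v$ on $U_v$, giving (\ref{L:balance a}).

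\emph{Dimension count.} Since $\OO_{K,S}$ is a PID, $\dim_{\FF_2}\OO_{K,S}^\times/(\OO_{K,S}^\times)^2=|S|+|S_\infty|$. Using the local sizes $\dim K_v^\times/(K_v^\times)^2=2+[K_v:\QQ_2]$, $2$, $1$, $0$ for $v\mid 2$, $v$ odd, $v$ real, $v$ complex, we get $\sum_{v\in S\cup S_\infty}\tfrac12\dim K_v^\times/(K_v^\times)^2=|S|+r_1+r_2$. The target of (\ref{L:balance b}) therefore has dimension
\[
|S|+|S_\infty|-\sum_{v\in S\cup S_\infty}\log_2\bigl(\tfrac12|\Phi_v|\bigr),
\]
using that $\Phi_v$ is maximal isotropic for the Hilbert pairing, so $|\Phi_v|^2=|K_v^\times/(K_v^\times)^2|$. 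So it suffices to arrange two things: the balance condition $\sum_{v\in S\cup S_\infty}\log_2(\tfrac12|\Phi_v|)=0$, and injectivity of the map.

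\emph{Auxiliary primes.} We add primes $\q\notin S_0$ and choose $U_\q$ so that $v_\q(t-P)=1$ for a fixed $P\in\calB$. By Lemma~\ref{L:sieve input}, taking $P\in\calM$ changes the balance sum by $+1$ and taking $P\in\calM'$ changes it by $-1$; both sets are nonempty by (\ref{I:new b}). For $P\in\calM'$ the group $\Phi_\q$ is in fact trivial, by Lemma~\ref{L:HP inclusion}(\ref{L:HP inclusion i}) with (\ref{I:new d}).

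\emph{Forcing injectivity.} Let $u$ be a nonzero element of the kernel. By Chebotarev in $K(\sqrt{u})$ for the finitely many such $u$, choose a prime $\q\notin S$ at which $u$ is a nonsquare unit. Put $\q$ in $S$ with $U_\q$ near a point of $\calM'$, so $\Phi_\q=1$ and $u$ leaves the kernel. Chebotarev lets us also control the other new $S$-units, so each step strictly shrinks the kernel. Finally we restore the balance condition by adding primes placed near points of $\calM$. These new primes can create new kernel elements, so we iterate, with Chebotarev choices made relative to everything chosen so far.

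The main obstacle is showing this alternation terminates. I would first handle it by choosing the balancing $\calM$-primes and the killing $\calM'$-primes simultaneously in pairs, as Chebotarev primes in a compositum. Each pair adds $2$ to the source dimension and $2$ to the target dimension, and the pair can be chosen so the kernel drops by one. The mechanism is to make the new $\calM'$-prime detect a current kernel element while the $\calM$-prime splits in all current quadratic extensions, so that it adds no new kernel. Checking the latter carefully is where I expect the real work to lie.
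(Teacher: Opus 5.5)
\textbf{Main gap.} The gap is in part (\ref{L:balance b}), at the step you flag yourself. You do not prove that the kill/rebalance alternation terminates, and the pair mechanism you suggest fails as stated. Write $\varphi$ for the map in (\ref{L:balance b}).

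Suppose the $\calM$-prime $\q'$ splits in $K(\sqrt u)$ for every $u\in\OO_{K,S}^\times$, and let $\pi'$ generate $\q'\OO_{K,S}$. The product formula gives $\prod_{v\in S\cup S_\infty}(u,\pi')_v=\legendre{u}{\q'}=1$. So the local image of $\pi'$ is orthogonal to the image of $\OO_{K,S}^\times/(\OO_{K,S}^\times)^2$ in $\prod_{v\in S\cup S_\infty}K_v^\times/(K_v^\times)^2$.

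That image is maximal isotropic. It is isotropic by reciprocity. The map is injective because $\OO_{K,S}$ is a PID and $S$ contains the primes above $2$, so by your own count the image has half the total dimension. Hence there is $u_0\in\OO_{K,S}^\times$ with $\pi'/u_0$ a square at every $v\in S\cup S_\infty$. Since $\Phi_{\q'}$ is the whole group, $\pi'/u_0$ is a new kernel element.

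Let $\q$ be the accompanying $\calM'$-prime. After adding the pair, the kernel is $(\ker\varphi\oplus\langle\pi'/u_0\rangle)\cap\{\text{squares at }\q\}$, which is no smaller than $\ker\varphi$. The correct requirement is the opposite: the generator of an $\calM$-prime should represent a \emph{nonzero} class of $\operatorname{coker}\varphi$.

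\textbf{Missing idea.} Your dimension count already contains the invariant you need:
$\dim\ker\varphi-\dim\operatorname{coker}\varphi=\beta:=\sum_{v\in S\cup S_\infty}\log_2(\tfrac12|\Phi_v|)$. With it no iteration is needed.

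The paper first adds a set $S_1$ of $\calM$-primes, where $\Phi_v=K_v^\times/(K_v^\times)^2$, so no local condition is imposed there. It chooses $S_1$ by weak approximation so that $\OO_{K,S_0\cup S_1}^\times\to\prod_{v\in S_0\cup S_\infty}(K_v^\times/(K_v^\times)^2)/\Phi_v$ is onto. Then, for a basis $u_1,\dots,u_m$ of the kernel, Chebotarev gives $\calM'$-primes $\p_1,\dots,\p_m$ with $u_i$ a square at $\p_j$ if and only if $i\ne j$.

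Each $\p_j$ adds one to the source and two to the target. It lowers the kernel by exactly one, since a kernel element must have even valuation at $\p_j$ because $\Phi_{\p_j}=1$. So surjectivity persists, injectivity holds at the end, and a cardinality count gives the isomorphism.

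Your order also works. Once the $\calM'$-primes have killed the kernel, $\dim\operatorname{coker}\varphi=-\beta$. You then add $-\beta$ $\calM$-primes one at a time, each with generator representing a nonzero class of the current cokernel. Such primes exist by Chebotarev in a ray class field, as in Lemma~\ref{L:pi existence}. Each one lowers the cokernel by one without creating kernel.

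\textbf{Two false supporting claims.} The conclusions they support are still true.

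First, the curves $E'_t$ with $t\in U_v$ are not all isomorphic, since $j(E'_t)$ is a nonconstant function of $t$. Local constancy of $\Im(\delta_{E'_t,v})$ instead follows from three facts. Smooth $K_v$-points on quartics of the form (\ref{E:sieve}), and their analogues for $E_t$, persist under small perturbation. Hence $\Im(\delta_{E'_t,v})$ and $\Im(\delta_{E_t,v})$ both contain their values at $t_v$ for $t$ near $t_v$. Finally, the two groups are exact annihilators of each other under the Hilbert pairing, which forces equality.

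Second, $\Phi_v$ is not maximal isotropic; at an $\calM$-prime it is the whole group. Your formula for the dimension of the target follows from the local sizes alone, without that claim.
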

\begin{proof}
Take any place $v\in S_0 \cup S_\infty$ and choose an element $u_v \in K_v^\times$ with $u_v\notin D$. When $v|2$, we assume $u_v$ has been chosen so that $u_v\notin \OO_v$.   With a fixed real number $\epsilon_v>0$, define the open subset 
\[
U_v:=\{u\in K_v:  |u-u_v|_v<\epsilon_v \}
\] 
of $K_v$, where $|\cdot |_v$ is an absolute value on $K_v$ corresponding to $v$.   By taking $\epsilon_v>0$ sufficiently small, we may assume that $u\notin D$ for all $u\in U_v$.  We can also assume that (\ref{L:balance c}) holds when $v|2$.  For each $u\in U_v$, we have $\Delta(u)\neq 0$ since $u\notin D$ and hence specialization of $E'$ at $u$ gives an elliptic curve $E'_u$ over $K_v$.     For $u\in U_v$, we have a homomorphism
\begin{align} \label{E:Kv connecting}
\delta_{E'_u}\colon E_{u}'(K_v) \to K_v^\times/(K_v^\times)^2
\end{align}
as in \S\ref{SS:2-descent}; we have $\delta_{E'_u}((x,y))=x\cdot (K_v^\times)^2$ for $(x,y) \in E'_u(K_v)- \{0,(0,0)\}$ and $\delta_{E'_u}((0,0))= (a^2-4b) \cdot (K_v^\times)^2$.   The key topological observation is that the image of (\ref{E:Kv connecting}) does not depend on the choice $u\in U_v$ if we take $\epsilon_v>0$ sufficiently small.   Thus by taking $\epsilon_v>0$ sufficiently small, we may assume that (\ref{E:Kv connecting}) has a common image $\Phi_{v}$ for all $u\in U_v$.   In particular, for all $t\in K\cap U_v$ we have $\Im(\delta_{E'_t,v})=\Phi_v$.

By weak approximation for $K$, there is a finite set $S_1$ of nonzero prime ideals of $\OO_K$ that is disjoint from $S_0$ such that the natural homomorphism 
\[
\psi\colon \OO_{K,S_0\cup S_1}^\times/(\OO_{K,S_0\cup S_1}^\times)^2 \to \prod_{v\in S_0\cup S_\infty} (K_v^\times/(K_v^\times)^2)/\Phi_v
\] 
is surjective.   Choose units $u_1,\ldots, u_m\in \OO_{K,S_0\cup S_1}^\times$ whose  image in $\OO_{K,S_0\cup S_1}^\times/(\OO_{K,S_0\cup S_1}^\times)^2$ gives a basis of the $\FF_2$-vector space $\ker \psi$.   The field $L:=K(\sqrt{u_1},\ldots, \sqrt{u_m})$ is an abelian extension of $K$ with $\Gal(L/K)\cong (\ZZ/2\ZZ)^m$.  By the Chebotarev density theorem, there are nonzero prime ideals $\p_1,\ldots, \p_m\notin S_0\cup S_1$ of $\OO_K$ such that $u_i$ is a square in $\OO_{\p_j}^\times$ if and only if $i\neq j$.    Define $S_2:=\{\p_1,\ldots, \p_m\}$.   Observe that if $u\cdot (\OO_{K,S_0\cup S_1}^\times)^2$ is in the kernel of $\psi$ and $u$ is a square in $\OO_v^\times$ for all $v\in S_2$, then $u \cdot (\OO_{K,S_0\cup S_1}^\times)^2=1$.

With $S:=S_0 \cup S_1 \cup S_2$, we define the natural group homomorphism
\[
\varphi\colon \OO_{K,S}^\times/(\OO_{K,S}^\times)^2 \to \prod_{v\in S\cup S_\infty} (K_v^\times/(K_v^\times)^2)/\Phi_v,
\]
where $\Phi_v:=K_v^\times/(K_v^\times)^2$ if $v\in S_1$ and $\Phi_v:=1$ if $v\in S_2$. 

We claim that $\varphi$ is injective.  Take any $\alpha\in \ker \varphi$ and choose a unit $u\in \OO_{K,S}^\times$ that represents $\alpha$.   Since $\varphi(\alpha)=1$, we find that $u$ is a square in $K_v^\times$ for all $v\in S_2$.   The ring $\OO_{K,S_0}$, and hence also $\OO_{K,S_0\cup S_1}$, is a PID by our choice of $S_0$.   Thus after multiplying $u$ by a suitable square in $K$, we may assume that our chosen $u$ is an element of $\OO_{K,S_0\cup S_1}^\times$.   Since $\varphi(\alpha)=1$, we have $u\cdot (K_v^\times)^2\in \Phi_v$ for all $v\in S_0\cup S_\infty$.    Therefore, $u\cdot (\OO_{K,S_0\cup S_1}^\times)^2$ is in the kernel of $\psi$.    Since $\varphi(\alpha)=1$, we have $u\cdot (K_v^\times)^2\in \Phi_v=1$ for all $v\in S_2$.   Since $u\cdot (\OO_{K,S_0\cup S_1}^\times)^2$ is in the kernel of $\psi$ and $u$ is a square in $\OO_v^\times$ for all $v\in S_2$, we deduce that $u$ is a square in $\OO_{K,S_0\cup S_1}^\times$ and hence $\alpha=1$.  This completes the proof of the claim.

We claim that $\varphi$ is an isomorphism.  We have
\[
\prod_{v\in S\cup S_\infty} |(K_v^\times/(K_v^\times)^2)/\Phi_v| = 4^m \prod_{v\in S_0 \cup S_\infty} |(K_v^\times/(K_v^\times)^2)/\Phi_v| = 2^m \cdot |\OO_{K,S_0\cup S_1}^\times/(\OO_{K,S_0\cup S_1}^\times)^2|,
\]
where the first equality uses that $|K_v^\times/(K_v^\times)^2/\Phi_v|$ is $1$ for $v\in S_1$ and $4$ for $v\in S_2$, and the last equality uses that $\psi$ is surjective with kernel of cardinality $2^m$.   By Dirichlet's unit theorem, the groups $\OO_{K,S_0\cup S_1}^\times$ and $\OO_{K,S}^\times$ are finitely generated abelian groups of rank $|S_\infty|+|S_0|+|S_1|-1$ and $|S_\infty|+|S|-1$, respectively.  Since the torsion subgroups of these unit groups are cyclic of even order, we have 
\[
|\OO_{K,S}^\times/(\OO_{K,S}^\times)^2|=2^{|S_\infty|+|S|} = 2^{|S_2|} \cdot 2^{|S_\infty|+|S_0|+|S_1|} = 2^m \cdot |\OO_{K,S_0\cup S_1}^\times/(\OO_{K,S_0\cup S_1}^\times)^2|.
\]
We have now shown that the domain and codomain of $\varphi$ have the same finite cardinality.  The claim follows since we have already proved that $\varphi$ is injective.

This proves (\ref{L:balance b}) and we have also verified (\ref{L:balance a}) for $v\in S_0\cup S_\infty$.

Take any $v\in S_1$ and set $\p:=v$.  There is an element  $e\in \calM$ by (\ref{I:new b}).  Since $v\notin S_0$, there is a $t_v \in K$ with $v_\p(t_v-e)=1$ and $v_\p(t_v-c)=0$ for $c\in \calB-\{e\}$.  By Lemma~\ref{L:sieve input}, we have $|\Im(\delta_{E'_{t_v,v}})|=4$ and hence $\Im(\delta_{E'_{t_v,v}})=K_v^\times/(K_v^\times)^2=\Phi_v$.   Arguing as above, there is an open neighborhood $U_v$ of $t_v$ in $K_v$ such that $\delta_{E_u'}\colon E_u'(K_v)\to K_v^\times/(K_v^\times)^2$ has image $\Phi_v$ for all $u\in U_v$.   This gives (\ref{L:balance a}) for $v\in S_1$.

Take any $v\in S_1$ and set $\p:=v$.  There is an element  $e\in \calM'$ by (\ref{I:new b}).  Since $v\notin S_0$, there is a $t_v \in K$ with $v_\p(t_v-e)=1$ and $v_\p(t_v-c)=0$ for $c\in \calB-\{e\}$.  By Lemma~\ref{L:sieve input}, we have $|\Im(\delta_{E'_{t_v,v}})|=1$ and hence $\Im(\delta_{E'_{t_v,v}})=1=\Phi_v$.   Arguing as above, there is an open neighborhood $U_v$ of $t_v$ in $K_v$ such that $\delta_{E_u'}\colon E_u'(K_v)\to K_v^\times/(K_v^\times)^2$ has image $\Phi_v$ for all $u\in U_v$.   This gives (\ref{L:balance a}) for $v\in S_2$.
\end{proof}

For the rest of the proof, we fix $S$, $\{U_v\}_{v\in S\cup S_\infty}$ and $\{\Phi_v\}_{v\in S\cup S_\infty}$ as in Lemma~\ref{L:balance}.  The following lemma will be key when we later compute the ratio of cardinalities of Selmer groups.
 
\begin{lemma} \label{L:balanced product}
We have $\prod_{v\in S \cup S_\infty} \tfrac{1}{2} |\Phi_v|=1$.
\end{lemma}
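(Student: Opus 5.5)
We prove the identity by comparing cardinalities in the isomorphism of Lemma~\ref{L:balance}(\ref{L:balance b}). The domain there is $\OO_{K,S}^\times/(\OO_{K,S}^\times)^2$. By Dirichlet's $S$-unit theorem, $\OO_{K,S}^\times$ has rank $|S|+|S_\infty|-1$. Its torsion subgroup is cyclic of even order, since it contains $-1$. Hence
\[
|\OO_{K,S}^\times/(\OO_{K,S}^\times)^2| = 2^{|S|+|S_\infty|}.
\]
The codomain has cardinality $\prod_{v\in S\cup S_\infty} |K_v^\times/(K_v^\times)^2|/|\Phi_v|$. Since the map is an isomorphism, we get
\[
\prod_{v\in S\cup S_\infty} \frac{|K_v^\times/(K_v^\times)^2|}{|\Phi_v|} = 2^{|S|+|S_\infty|},
\]
and therefore
\[
\prod_{v\in S\cup S_\infty}\tfrac12|\Phi_v| = 2^{-2(|S|+|S_\infty|)}\prod_{v\in S\cup S_\infty}|K_v^\times/(K_v^\times)^2|.
\]

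It remains to show that $\prod_{v\in S\cup S_\infty}|K_v^\times/(K_v^\times)^2| = 4^{|S|+|S_\infty|}$. We use the standard local formula $|K_v^\times/(K_v^\times)^2| = 4\cdot |2|_v^{-1}$, where $|\cdot|_v$ is the normalized absolute value. Concretely, this cardinality is $4$ for finite $v\nmid 2$, it is $4\cdot 2^{[K_v:\QQ_2]}$ for $v\mid 2$, it is $2$ for real $v$, and it is $1$ for complex $v$.

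The set $S$ contains all primes above $2$, since $S_0\subseteq S$ contains every prime dividing $6$. Taking the product over $S\cup S_\infty$, the extra factors multiply to
\[
\prod_{v\mid 2} 2^{[K_v:\QQ_2]}\cdot \prod_{v \text{ real}} 2^{-1}\prod_{v\text{ complex}}4^{-1} = 2^{[K:\QQ]}\cdot 2^{-r_1-2r_2}=1.
\]
This is just the product formula applied to $2$. So the product of the $|K_v^\times/(K_v^\times)^2|$ equals $4^{|S|+|S_\infty|}$, and the claimed identity follows.

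The only genuine inputs are the local square-class counts and the inclusion of all $2$-adic places in $S$. The main point to check carefully is the $2$-adic count $|K_v^\times/(K_v^\times)^2|=2^{2+[K_v:\QQ_2]}$, which follows from $K_v^\times\cong \ZZ\times\mu(K_v)\times\ZZ_2^{[K_v:\QQ_2]}\times(\text{finite odd})$ with $\mu(K_v)$ of even order. The rest is bookkeeping. This lemma is exactly the global balance that, combined with Lemma~\ref{L:Selmer ratio}(\ref{L:Selmer ratio i}) and Lemma~\ref{L:sieve input}, will control the Selmer ratio.
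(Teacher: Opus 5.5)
Your proof is correct and is essentially the paper's argument. You compute $|\OO_{K,S}^\times/(\OO_{K,S}^\times)^2|=2^{|S|+|S_\infty|}$ by Dirichlet's $S$-unit theorem, count local square classes (using that $S$ contains all primes above $2$), and compare cardinalities through the isomorphism of Lemma~\ref{L:balance}(\ref{L:balance b}); the only cosmetic difference is that you write the local counts as $4|2|_v^{-1}$ and cancel the $2$-adic and archimedean contributions with the product formula for $2$, whereas the paper does the same cancellation by hand using $\sum_{v\mid 2} e_v f_v=[K:\QQ]=r+2s$.
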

\begin{proof}
Let $r$ be the number of real embeddings of $K$ and let $s$ be the number of pairs of conjugate complex embeddings of $K$.   By Dirichlet's unit theorem, the abelian group $\OO_{K,S}^\times$ is finitely generated and has rank $r+s+|S|-1$.  Since the torsion subgroup of $\OO_{K,S}^\times$ is cyclic of even order, we deduce that 
\begin{align} \label{E:unit theorem rank}
|\OO_{K,S}^\times/(\OO_{K,S}^\times)^2|=2^{(r+s+|S|-1)+1}=2^{r+s+|S|}.
\end{align}

Take any place $v|2$ of $K$.   We have $[K_v:\QQ_2]=e_vf_v$, where $e_v$ and $f_v$ are the ramification index and inertia degree, respectively, of the extension $K_v/\QQ_2$.  We have $\OO_v^\times\cong C_v \times \ZZ_2^{e_vf_v}$, where $C_v$ is a finite cyclic group, cf.~\cite[II 5.7]{Neukirch}.  The group $C_v$ has even cardinality since it contains $-1$ and hence $\OO_v^\times/(\OO_v^\times)^2\cong (\ZZ/2\ZZ)^{1+e_vf_v}$.   Therefore, $\tfrac{1}{2} |K_v^\times/(K_v^\times)^2| = 2^{1+e_v f_v}$.  Since $\sum_{v|2} e_v f_v = [K:\QQ]=r+2s$, we deduce that $\prod_{v|2} \tfrac{1}{2} |K_v^\times/(K_v^\times)^2| = 2^{r+2s} \prod_{v|2} 2$.    For any place $v\in S$ with $v\nmid 2$, we have $\tfrac{1}{2}|K_v^\times/(K_v^\times)^2\big|=2$.   We have $\prod_{v\in S_\infty} \tfrac{1}{2} |K_v^\times/(K_v^\times)^2|=1/2^s$.   Using these computations and that $S$ contains all the places of $K$ dividing $2$, we find that
\begin{align} \label{E:prod Phiv is 1}
\prod_{v\in S\cup S_\infty} \tfrac{1}{2} | K_v^\times/(K_v^\times)^2| = 2^{-s}\cdot 2^{r+2s} \prod_{v|2} 2 \cdot \prod_{v\in S,\, v\nmid 2} 2  = 2^{r+s+|S|}.
\end{align}
Using the isomorphism from Lemma~\ref{L:balance}(\ref{L:balance b}), we have
\[
\prod_{v\in S \cup S_\infty} \tfrac{1}{2} |\Phi_v| = \prod_{v\in S\cup S_\infty} \tfrac{1}{2} | K_v^\times/(K_v^\times)^2|  \cdot |\OO_{K,S}^\times/(\OO_{K,S}^\times)^2|^{-1} = 1,
\]
where the last equality uses (\ref{E:unit theorem rank}) and (\ref{E:prod Phiv is 1}).
\end{proof}

\begin{lemma} \label{L:pi existence}
There is a nonzero $\pi \in \OO_K$ such that $\pi$  is a square in $K_v^\times$ for all $v\in S \cup S_\infty$ and $\pi\OO_K$ is a prime ideal of $\OO_K$ that does not lie in $S$.
\end{lemma}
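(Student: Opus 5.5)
We will produce $\pi$ via the Chebotarev density theorem applied to a ray class field, i.e.\ as a totally positive-type generator of a prime ideal in a suitable narrow ray class.

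For each finite $v\in S$, the subgroup $(K_v^\times)^2\cap \OO_v^\times$ is open in $\OO_v^\times$. So there is an integer $k_v\geq 1$ such that every $u\in 1+\p_v^{k_v}\OO_v$ is a square in $K_v$. This follows from Hensel's lemma; for $v|2$ one takes $k_v$ larger than $2v(2)$. Define the modulus
\[
\m:=\prod_{v\in S}\p_v^{k_v}\cdot \prod_{v \text{ real}} v .
\]
Any $\pi\in \OO_K$ with $\pi\equiv 1 \pmod{\p_v^{k_v}}$ for all finite $v\in S$ and $\pi$ positive at every real place is then a square in $K_v^\times$ for all $v\in S\cup S_\infty$. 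At complex places the condition is vacuous.

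It remains to find such a $\pi$ with $\pi\OO_K$ prime and not in $S$. Let $K_\m$ be the ray class field of modulus $\m$, with Artin isomorphism $I^\m/P_{K,1}^\m\cong \Gal(K_\m/K)$. Here $P_{K,1}^\m$ is the group of principal ideals $(\alpha)$ with $\alpha\equiv 1 \bmod^\times \m$. By Chebotarev, infinitely many prime ideals $\q\notin S$ split completely in $K_\m$, which means $\q$ lies in the trivial ray class. For such $\q$ we get $\q=\alpha\OO_K$ with $\alpha\in K^\times$, $\alpha\equiv 1 \bmod^\times \p_v^{k_v}$ for $v\in S$, and $\alpha$ totally positive. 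Since $\q$ is an integral ideal, $\alpha\in\OO_K$; it is nonzero and coprime to $S$. Taking $\pi:=\alpha$, the condition $\alpha\equiv 1 \bmod^\times \p_v^{k_v}$ with $\alpha\in\OO_v^\times$ means $\alpha\in 1+\p_v^{k_v}\OO_v$, so $\alpha$ is a square in $K_v$. Positivity handles the real places.

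The only delicate step is the choice of $k_v$ at places above $2$, where squares in $\OO_v^\times$ are characterized by congruences modulo $4\p_v$. The standard bound $k_v=2v_\p(2)+1$ suffices by the usual binomial-series or Hensel argument applied to $x^2-u$. Everything else is routine class field theory. Alternatively, one could avoid ray class fields by applying Chebotarev to the finite extension $K(\sqrt{u}: u\in \OO_{K,S}^\times)$ together with the PID property of $\OO_{K,S}$, but the ray class field argument is cleaner.
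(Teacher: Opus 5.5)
Your proof is correct and is essentially the paper's argument. Chebotarev gives a prime $\q\notin S$ splitting completely in the ray class field of modulus $\prod_{\p\in S}\p^{n_\p}$ times the real places, so $\q=\pi\OO_K$ with $\pi$ totally positive and $\pi\equiv 1$ to high order at each $\p\in S$, hence a local square; you only make the exponent explicit ($2v_\p(2)+1$ at $\p\mid 2$, via Hensel) where the paper takes $n_\p$ large enough. Drop your closing aside, though: Chebotarev in $K(\sqrt{u}: u\in\OO_{K,S}^\times)$ together with $\OO_{K,S}$ being a PID only produces a generator of $\q\OO_{K,S}$ and gives no control over the class of $\q$ in the class group of $\OO_K$, so it does not yield $\pi$ with $\pi\OO_K$ itself prime.
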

\begin{proof}
Fix an integer $n_\p\geq 1$ for each $\p\in S$.  Let $I^S$ be the group of fractional ideals of $K$ generated by nonzero prime ideals $\p\notin S$ of $\OO_K$.   Let $H$ be the subgroup of $I^S$ consisting of $\alpha \OO_K$ where $\alpha\in K^\times$ is positive in $K_v$ for all real place $v$ in $S_\infty$ and $v_\p(\alpha-1) \geq n_\p$ for all $\p\in S$.   Define $C:=I^S/H$; it is the \emph{ray class group} of $K$ with modulus $\mathfrak{m}:=\prod_{\p\in S} \p^{n_\p} \cdot \prod_{v\in S_\infty \text{ real}} v$.   Let $K_{\mathfrak{m}}$ be the corresponding \emph{ray class field}; it is a finite abelian extension of $K$ and a prime ideal $\p \notin S$ splits completely in $K_{\mathfrak{m}}$ if and only if $\p\in H$.  By the Chebotarev density theorem, there is a nonzero prime ideal $\q\notin S$ of $\OO_K$ that splits completely in $K_{\mathfrak{m}}$.   We have $\q\in H$ and hence $\q=\pi \OO_K$ for some $\pi\in \OO_K$ that is positive in $K_v$ for all real place $v$ in $S_\infty$ and $v_\p(\pi-1) \geq n_\p$ for all $\p\in S$.  For each $v\in S_\infty$, $\pi$ is a square in $K_v$.    For each $\p \in S$, we have $\pi \in 1 +\p^{n_\p} \OO_\p$ and hence $\pi$ is a square in $K_\p$ assuming that $n_\p$ is large enough.   So by choosing the $n_\p \geq 1$ large enough, we find that $\pi$ is a square in $K_v^\times$ for all $v\in S\cup S_\infty$.
\end{proof}

For the rest of the proof, we fix $\pi\in \OO_K$ as in Lemma~\ref{L:pi existence} and define $\q_1:=\pi \OO_K$.

\begin{lemma} \label{L:q existence}
There is a nonzero prime ideal $\q_2\notin S\cup\{\q_1\}$ of $\OO_K$ such that every element in $\OO_{K,S\cup\{\q_1\}}^\times$ is a square in $K_{\q_2}$.
\end{lemma}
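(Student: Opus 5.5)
The plan is to use the Chebotarev density theorem applied to a Kummer extension generated by square roots of $S\cup\{\q_1\}$-units. By Dirichlet's unit theorem, the group $\OO_{K,S\cup\{\q_1\}}^\times$ is finitely generated. Choose generators $u_1,\ldots,u_k$ of it; these include a generator of the torsion subgroup (which contains $-1$). Then form the field
\[
L:=K(\sqrt{u_1},\ldots,\sqrt{u_k}).
\]
This is a finite Galois (indeed abelian, of exponent $2$) extension of $K$.

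By the Chebotarev density theorem, infinitely many nonzero prime ideals $\q$ of $\OO_K$ split completely in $L$. Since $S\cup\{\q_1\}$ is finite, and only finitely many primes ramify in $L$ (all of them dividing $2$ or lying in $S\cup\{\q_1\}$), we can choose such a $\q_2\notin S\cup\{\q_1\}$ that is unramified in $L$ and splits completely.

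For this $\q_2$, complete splitting means that for each $i$ the polynomial $X^2-u_i$ has a root in $K_{\q_2}$, because the residue extension is trivial and $L$ embeds into $K_{\q_2}$. Equivalently, each $u_i$ is a square in $K_{\q_2}$. Note that $u_i\in\OO_{\q_2}^\times$, since $\q_2\notin S\cup\{\q_1\}$. Every element of $\OO_{K,S\cup\{\q_1\}}^\times$ is a product of powers of the $u_i$, so it is also a square in $K_{\q_2}$. Alternatively, one can argue with the group $\OO_{K,S\cup\{\q_1\}}^\times/(\OO_{K,S\cup\{\q_1\}}^\times)^2$, which is a finite $\FF_2$-vector space, and adjoin square roots of representatives of a basis.

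There is no real obstacle here. The only points needing care are excluding the finitely many primes in $S\cup\{\q_1\}$, together with ramified primes such as those above $2$ (these already lie in $S$, since $S_0$ contains the primes dividing $6$), and justifying that "splits completely in $L$" implies that each generator is a square in the completion. The latter follows since the decomposition group at $\q_2$ is trivial, so $L\subseteq K_{\q_2}$ under any embedding.
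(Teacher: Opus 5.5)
Your proposal is correct and follows essentially the paper's proof: finite generation of $\OO_{K,S\cup\{\q_1\}}^\times$ by Dirichlet's unit theorem, then Chebotarev applied to $L=K(\{\sqrt{\alpha}:\alpha\in B\})$ for a finite generating set $B$ to find $\q_2\notin S\cup\{\q_1\}$ splitting completely in $L$. Your remarks on ramification and on why complete splitting gives $L\subseteq K_{\q_2}$ are sound details that the paper leaves implicit.
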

\begin{proof}
The group $\OO_{K,S\cup\{\q_1\}}^\times$ is finitely generated by Dirichlet's unit theorem; fix a finite set of generators $B$.  Define $L:=K(\{\sqrt{\alpha}: \alpha \in B\})$; it is a finite Galois extension of $K$.   By the Chebotarev density theorem, there is a nonzero prime ideal $\q_2\notin S\cup\{\q_1\}$ of $\OO_K$ that splits completely in $L$.    The lemma follows since every element of $B$ is a square in $K_{\q_2}$.
\end{proof}

For the rest of the proof, we fix a prime ideal $\q_2$ as in Lemma~\ref{L:q existence}.

\subsection{A choice of elliptic curve} \label{SS:choice of elliptic curve}
Assume we have a set of primes $S\cup\{\q_1,\q_2\}$ with assumptions as in \S\ref{SS:choice of primes}.

In this section, we will consider the elliptic curve $E_t$ over $K$ with $t:=m/n$, where $m$ and $n$ are chosen as in the following proposition.  It is here that we make use of a result of Kai that is a number field analogue of the Green–Tao–Ziegler theorems on simultaneous prime values of degree $1$ polynomials.

\begin{prop} \label{P:additive}
There are nonzero $m,n \in \OO_{K,S}$ such that the following hold:
\begin{alphenum}
\item \label{P:additive a}
the elements $m-en$ with $e\in \calB$ generate distinct nonzero prime ideals of $\OO_{K,S\cup \{\q_1,\q_2\}}$,
\item \label{P:additive b}
$m/n$ lies in $U_v$ for all $v\in S\cup S_\infty$,
\item \label{P:additive c}
$v_{\q_1}(m-en)=1$ for some $e\in \calM$, $v_{\q_1}(n)=0$, and $n$ is not a square modulo $\q_1$,
\item \label{P:additive d}
$v_{\q_2}(m-en)=1$ for some $e\in \calM'$, $v_{\q_2}(n)=0$, and $n$ is not a square modulo $\q_2$,
\item \label{P:additive e}
$v_\p(m-en)=0$ for all prime ideals $\p$ of $\OO_K$ dividing $2$.
\end{alphenum}
\end{prop}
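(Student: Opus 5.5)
The plan is to deduce Proposition~\ref{P:additive} from Kai's theorem \cite[Proposition~13.2]{Kai}, the number field analogue of Green--Tao--Ziegler. Kai's result says the following. Let the linear forms $\psi_e(m,n)=m-en$, $e\in\calB$, be pairwise non-proportional; this holds because the $e$ are distinct. Fix a finite set of primes $S'$ and an open set of local conditions on $(m,n)$ at each place of $S'$ and each archimedean place. Then there exist $(m,n)\in\OO_{K,S'}^2$ satisfying these local conditions such that all $\psi_e(m,n)$ generate distinct prime ideals of $\OO_{K,S'}$, provided there is no local obstruction at the places of $S'$. The local conditions must therefore be arranged so that the distinguished values can themselves be generators of primes: at $\q_1$ the value $m-e_1n$ will be a uniformizer rather than a unit.

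I would apply Kai with $S':=S\cup\{\q_1,\q_2\}$. The local conditions are as follows.
\begin{itemize}
\item[(i)] For each $v\in S\cup S_\infty$, require $m/n\in U_v$. This is an open condition on pairs $(m,n)\in K_v^2$ with $n\neq 0$.
\item[(ii)] For $v\mid 2$, require additionally $v(n)$ very negative and $m/n\in U_v$, so that $v(m-en)=v(n)+v(m/n-e)$. To get (e) I would require instead that $m-en$ be a $v$-unit for all $e$. Since $U_v\cap\OO_v=\emptyset$ by Lemma~\ref{L:balance}(\ref{L:balance c}), we have $v(m/n)<0$ and hence $v(m/n-e)=v(m/n)$ for all $e\in\calB\subseteq\OO_K$. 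So it suffices to choose $n$ with $v(n)=-v(m/n)$, which is constant on a small enough $U_v$, and then $v(m-en)=0$ for every $e$. This is again an open condition.
\item[(iii)] At $\q_1$, pick $e_1\in\calM$ and require $n\in\OO_{\q_1}^\times$ to be a nonsquare, $m\equiv e_1 n\pmod{\q_1}$ with $v_{\q_1}(m-e_1n)=1$ exactly, and hence $v_{\q_1}(m-en)=0$ for the other $e$. These are open conditions. This is allowed because $\q_1\notin S_0$, so the elements of $\calB$ are distinct mod $\q_1$.
\item[(iv)] At $\q_2$, impose the same conditions with some $e_2\in\calM'$.
\end{itemize}

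Next I would check the non-degeneracy hypothesis of Kai, namely that the local conditions are compatible with all $\psi_e(m,n)$ being $S'$-units times primes. At places of $S'$ the values are allowed to have any valuation, since these primes are inverted. So the only real check is that the open set is nonempty, which is clear. Kai's theorem then produces $m,n\in\OO_{K,S'}$ such that the $m-en$ generate distinct primes of $\OO_{K,S'}$.

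It remains to get $m,n\in\OO_{K,S}$, i.e.\ integral at $\q_1$ and $\q_2$. This is built into (iii) and (iv), where we required $n$ to be a unit and $m\equiv e_in$, so $m,n\in\OO_{\q_i}$. The resulting $m-en$ are then primes of $\OO_{K,S\cup\{\q_1,\q_2\}}$, which is condition (a).

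The main obstacle is matching the exact formulation of \cite[Proposition~13.2]{Kai}. I expect it to be phrased in terms of integral points in a box, with congruence conditions modulo an ideal and archimedean cone conditions, rather than open sets in $K_v^2$. I would translate each open condition above into a congruence class modulo a high power of $\p$ for each $\p\in S\cup\{\q_1,\q_2\}$, plus archimedean conditions. Clearing denominators uses that $\OO_{K,S}$ is a PID, since $S\supseteq S_0$. A further point to verify is that the scaling by $S$-units needed to pass from $\OO_K$ to $\OO_{K,S}$ preserves $m/n$. It does: scaling $(m,n)$ by a common factor does not change $t=m/n$. The archimedean conditions are homogeneous once stated for $m/n$.
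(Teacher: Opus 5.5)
Your proposal is correct and follows the paper's proof: apply \cite[Proposition~13.2]{Kai} over $\OO_{K,S\cup\{\q_1,\q_2\}}$ with the following open local conditions.
\begin{itemize}
\item $m/n\in U_v$ for $v\in S\cup S_\infty$.
\item At $\q_i$: $n$ is a unit and a nonsquare, and $v_{\q_i}(m-e_in)=1$ for some $e_i$ in $\calM$ or $\calM'$; this also makes $m,n$ integral at $\q_1,\q_2$.
\item Above $2$: your condition $v(n)=-v(m/n)$, which is equivalent to the paper's $v_\p(m)=0$, $v_\p(n)>0$ because $U_\p\cap\OO_\p=\emptyset$.
\end{itemize}
The paper uses Kai's result directly in this open-set form, so your final translation paragraph is unnecessary. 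Note also that rescaling $(m,n)$ by an $S$-unit preserves $m/n$ but would not in general preserve the conditions at $\q_1$, $\q_2$ and above $2$.
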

\begin{proof}
For each $\p\in S$ with $\p\nmid 2$, let $\scrU_\p$ be the set of pairs $(\alpha,\beta)\in K_\p^2$ with $\beta\neq 0$ and $\alpha/\beta\in U_\p$.  For each $\p\in S$ with $\p| 2$, let $\scrU_\p$ be the set of pairs $(\alpha,\beta)\in K_\p$ with $\beta\neq 0$, $\alpha/\beta\in U_\p$, $v_\p(\alpha)=0$ and $v_\p(\beta) > 0$.   For each $\p\in S$, $\scrU_\p$ is a nonempty open subset of $K_\p^2$ since $U_\p$ is a nonempty open subset of $K_\p$ (when $\p|2$ we also use that $U_\p\cap \OO_\p=\emptyset$ to ensure that $\scrU_\p$ is nonempty). 

Let $\scrU_{q_1}$ be the set of $(\alpha,\beta)\in \OO_{\q_1}^2$ for which $v_{\q_1}(\beta)=0$, $\beta$ is not a square modulo $\q_1$, and $v_{\q_1}(\alpha-e \beta)=1$  for some $e\in \calM$.  Let $\scrU_{q_2}$ be the set of $(\alpha,\beta)\in K_{\q_2}^2$ for which $v_{\q_2}(\beta)=0$, $\beta$ is not a square modulo $\q_2$, and $v_{\q_2}(\alpha-e \beta)=1$  for some $e\in \calM'$.  Since $\calM\cup \calM' \subseteq \OO_K$, we find that $\scrU_{\q_i}$ is a nonempty open subset of $\OO_{\q_i}^2\subseteq K_{\q_i}^2$ for each $i\in \{1,2\}$.

Define $S':=S\cup\{\q_1,\q_2\}$.  By \cite[Proposition~13.2]{Kai} and the openness of our sets, there are nonzero $m,n\in \OO_{K,S'}$ such that the elements $m-en$ with $e\in \calB$ generate distinct prime ideals of $\OO_{K,S'}$, the pair $(m,n)$ lies in $\scrU_\p$ for all $\p\in S'$, and $m/n$ lies in $U_v$ for all $v\in S_\infty$.  Properties (\ref{P:additive a})--(\ref{P:additive d}) all hold by our definitions.  For (\ref{P:additive e}) note that $v_\p(m)=0$ and $v_\p(n)>0$ for all $\p$ dividing $2$ (this uses that $S$ contains all the prime ideals dividing $2$). 

We have $m,n\in \OO_{\q_i}$ for $i\in \{1,2\}$ since $\scrU_{\q_i}\subseteq \OO_{q_i}^2$.  Since $m$ and $n$ are in $\OO_{K,S'}$ we deduce that $m$ and $n$ lie in $\OO_{K,S}$.
\end{proof}

For the rest of the section, we fix $m,n \in \OO_{K,S}$ as in Proposition~\ref{P:additive} and define 
\[
t:=m/n \in K. 
\]
We have $t\notin D$ since $t\in U_v$ for all $v\in S\cup S_\infty$.  Set $S':=S\cup \{\q_1,\q_2\}$. For each $e \in \calB$, there is a unique nonzero prime ideal $\p_e\notin S'$ of $\OO_K$ such that 
\begin{align}\label{E:pe}
(m-en)\OO_{K,S'}=\p_e \OO_{K,S'}.
\end{align}
The prime ideals $\{\p_e\}_{e\in \calB}$ are distinct and do not lie in $S'$ by our choice of $m$ and $n$.

\begin{lemma} \label{L:pe valuation}
For any $e,e'\in \calB$, we have $v_{\p_e}(n)=0$ and 
\[
v_{\p_e}(m-e'n)= 
\begin{cases} 1 & \text{ if $e=e'$},\\ 0 & \text{ if $e\neq e'$.} \end{cases}
\]
\end{lemma}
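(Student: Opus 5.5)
The key observation is that $\p_e\notin S'$, so $\OO_{K,S'}\subseteq \OO_{\p_e}$ and $v_{\p_e}$ can be read off from the ideal equality (\ref{E:pe}). Since $m-en\in \OO_{K,S}\subseteq \OO_{K,S'}$ and $(m-en)\OO_{K,S'}=\p_e\OO_{K,S'}$, localizing at $\p_e$ gives $(m-en)\OO_{\p_e}=\p_e\OO_{\p_e}$, so $v_{\p_e}(m-en)=1$. This is the case $e'=e$.

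Next I show $v_{\p_e}(n)=0$. We have $v_{\p_e}(n)\ge 0$ because $n\in\OO_{K,S}$ and $\p_e\notin S$. Suppose $v_{\p_e}(n)>0$. Then $v_{\p_e}(m)=v_{\p_e}((m-en)+en)>0$, since $e\in\calB\subseteq\OO_K$. Now choose any $e'\in\calB$ with $e'\neq e$; one exists because $\calM$ and $\calM'$ are both nonempty by condition (\ref{I:new b}). Then $v_{\p_e}(m-e'n)>0$. But $(m-e'n)\OO_{K,S'}=\p_{e'}\OO_{K,S'}$ with $\p_{e'}\ne\p_e$ and $\p_e\notin S'$, so $m-e'n$ is a unit at $\p_e$. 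This is a contradiction, so $v_{\p_e}(n)=0$.

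Finally, take $e'\neq e$. By (\ref{E:pe}) for $e'$, the element $m-e'n$ generates $\p_{e'}$ in $\OO_{K,S'}$. The prime $\p_e$ is distinct from $\p_{e'}$ and lies outside $S'$, so $\p_e$ does not appear in the factorization of $(m-e'n)\OO_{K,S'}$. Hence $v_{\p_e}(m-e'n)=0$.

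An alternative route for this last step uses that $m-e'n=(m-en)+(e-e')n$. Here $v_{\p_e}(m-en)=1$, and $v_{\p_e}((e-e')n)=0$ because $\p_e\notin S_0$ does not divide $e-e'$ and $v_{\p_e}(n)=0$. So the valuation of the sum is $0$. No step here is a genuine obstacle; the only point needing care is using distinctness of the $\p_e$ together with $\p_e\notin S'$ to get $v_{\p_e}(n)=0$.
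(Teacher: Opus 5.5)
Your proof is correct and follows essentially the same route as the paper. Both read off $v_{\p_e}(m-e'n)$ from the factorizations (\ref{E:pe}), using that the $\p_e$ are distinct and lie outside $S'$, and both get $v_{\p_e}(n)=0$ by contradiction, since otherwise $\p_e$ would contain every $m-e'n$. Your explicit remark that $|\calB|\ge 2$, because $\calM$ and $\calM'$ are disjoint and nonempty, makes precise a point the paper's contradiction uses only implicitly.
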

\begin{proof}
Since the prime ideals $\{\p_e\}_{e\in \calB}$ are distinct and do not lie in $S'$, the desired expression for $v_{\p_e}(m-e'n)$ follows directly from the factorizations (\ref{E:pe}).

With $\p:=\p_e$, it remains to show that $v_{\p}(n)=0$.  Assume to the contrary that $v_{\p}(n)\neq 0$ and hence $n \in \p\OO_p$.  We also have $m\in \p\OO_\p$ since $m-en\in \p\OO_\p$ and $e\in \OO_K$.  Therefore,  $m-e'n \in \p\OO_\p$ for all $e'\in \calB$.  This contradicts that the ideals $(m-e'n)\OO_{K,S'}$ with $e'\in \calB$ are distinct prime ideals.  Therefore, we have $v_\p(n)=0$.
\end{proof}

We will now study the elliptic curves $E_t$ and $E_t'$ with our specific choice of $t$.

\begin{lemma} \label{L:good reduction for Et}
The curves $E_t$ and $E'_t$ over $K$ have good reduction at all nonzero prime ideals $\p\notin S \cup \{\q_1,\q_2\} \cup\{\p_e: e \in \calB\}$ of $\OO_K$.
\end{lemma}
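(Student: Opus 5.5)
Take a nonzero prime ideal $\p\notin S\cup\{\q_1,\q_2\}\cup\{\p_e: e\in\calB\}$ of $\OO_K$. Since $E'_t$ is $2$-isogenous to $E_t$, it has the same reduction type, so it suffices to treat $E_t$. Note $\p\notin S_0$ because $S_0\subseteq S$. We split according to the sign of $v_\p(t)$, where $t=m/n$ with $m,n\in\OO_{K,S}$; in particular $v_\p(m),v_\p(n)\geq 0$.

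\emph{Case $v_\p(t)<0$.} Since $t\notin \calB$, Lemma~\ref{L:good reduction at infinity} (whose conclusion we arranged to hold for all $\p\notin S_0$) gives good reduction of $E_t$ and $E'_t$ at $\p$ directly.

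\emph{Case $v_\p(t)\geq 0$.} Then $t\in\OO_\p$, so the model $y^2=x^3+a(t)x^2+b(t)x$ has $\p$-integral coefficients, since $a,b\in\OO_K[T]$. Its discriminant is $\Delta(t)=c\prod_{e\in\calB}(t-e)^{k_e}$, where $c$ is the leading coefficient of $\Delta$ and $k_e\geq 1$; this factorization uses condition (\ref{I:new a}) and $\calB\subseteq\OO_K$. We have $v_\p(c)=0$ because $\p\notin S_0$. It remains to see $v_\p(t-e)=0$ for each $e\in\calB$. We have $v_\p(t-e)=v_\p(m-en)-v_\p(n)$. The ideal $(m-en)\OO_{K,S'}=\p_e\OO_{K,S'}$ with $\p\neq\p_e$ and $\p\notin S'$, so $v_\p(m-en)=0$. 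Hence $v_\p(t-e)=-v_\p(n)\leq 0$, and since $t-e\in\OO_\p$ it equals $0$. So $\Delta(t)\in\OO_\p^\times$, the integral model has unit discriminant, and $E_t$ has good reduction at $\p$; then so does $E'_t$.

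The only delicate point is the case distinction: when $v_\p(n)>0$ we fall into the first case, where we rely on good reduction at $\infty$ via Lemma~\ref{L:good reduction at infinity}. That lemma applies because we enlarged $S_0$ to make its conclusion hold for all $\p\notin S_0$. Everything else is bookkeeping of valuations using the factorizations (\ref{E:pe}).
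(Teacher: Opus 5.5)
Your proof is correct and follows the paper's argument essentially verbatim. You use the same split on the sign of $v_\p(t)$, apply Lemma~\ref{L:good reduction at infinity} when $v_\p(t)<0$, and when $v_\p(t)\geq 0$ combine the splitting of $\Delta$ into linear factors (with leading coefficient a unit away from $S_0$) with the factorizations (\ref{E:pe}) to rule out $t\equiv e \pmod{\p}$. The only cosmetic difference is that you compute $v_\p(t-e)=v_\p(m-en)-v_\p(n)$ directly, whereas the paper argues by contradiction that $t-e\in\p\OO_\p$ would force $m-en=n(t-e)\in\p\OO_\p$.
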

\begin{proof}
Take any  nonzero prime ideal $\p\notin S\cup\{\q_1,\q_2\} \cup\{\p_e: e \in \calB\}$ of $\OO_K$.  If $v_\p(t)<0$, then $E_t$ has good reduction at $\p$ by Lemma~\ref{L:good reduction at infinity}.  So we may assume that $v_\p(t)\geq 0$.  We have $a,b,\Delta\in \OO_K[T]$ so setting $T=t$ in the model (\ref{E:basic eqn}) gives a Weierstrass model for $E_t$ with coefficients in $\OO_\p$ that has discriminant $\Delta(t) \in \OO_\p$.  

It suffices to prove that $\Delta(t)$ lies in $\OO_\p^\times$, so assume to the contrary that $\Delta(t)$ is an element of $\p\OO_\p$.  Since the leading coefficient of $\Delta$ lies in $\OO_{K,S_0}^\times$ and $\calB\subseteq \OO_K$ is the set of roots of $\Delta$ in $\Kbar$, we have $t-e \in \p\OO_\p$ for some $e\in \calB$.  Multiplying by $n\in \OO_{K,S}$ this implies that $m-en \in \p\OO_\p$.   The factorization (\ref{E:pe}) implies that $\p\in S\cup\{\q_1,\q_2\} \cup\{\p_e\}$ which contradicts our choice of $\p$.  Therefore, $\Delta(t)\in\OO_\p^\times$ and hence $E_t$ has good reduction at $\p$.   The elliptic curve $E'_t$ also has good reduction at $\p$ since it is isogenous to $E_t$
\end{proof}

We have a degree $2$ isogeny $\phi_t\colon E_t\to E_t'$ whose kernel is generated by $(0,0)$.  Let $\hat\phi_t\colon E_t'\to E_t$ be the dual isogeny of $\phi_t$.

\begin{lemma} \label{L:final sieve ingredients}
\begin{romanenum}
\item \label{L:final sieve ingredients i}
If $v\in S \cup S_\infty$, then $\Im(\delta_{E'_t,v}) = \Phi_v$.
\item \label{L:final sieve ingredients ii}
If $\p\notin S$ is a nonzero prime ideal of $\OO_K$, then
\[
\tfrac{1}{2} |\Im(\delta_{E'_t,\p})| = 
\begin{cases}
2 & \text{ if $\p\in \{\q_1\}\cup \{\p_e: e\in \calM\}$,} \\
\tfrac{1}{2} & \text{ if $\p\in \{\q_2\}\cup \{\p_e: e\in \calM'\}$,} \\
1 & \text{ otherwise.}
\end{cases}
\]
\item \label{L:final sieve ingredients iii}
We have $|\Sel_{\phi_t}(E_t/K)|/|\Sel_{\hat\phi_t}(E'_t/K)|=2^{|\calM|-|\calM'|}$.
\end{romanenum}
\end{lemma}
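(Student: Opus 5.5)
Part (i) is immediate from Lemma~\ref{L:balance}(\ref{L:balance a}): by Proposition~\ref{P:additive}(\ref{P:additive b}) we have $t=m/n\in K\cap U_v$ for every $v\in S\cup S_\infty$, so $\Im(\delta_{E'_t,v})=\Phi_v$.

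For (ii), take a nonzero prime $\p\notin S$. Since $S\supseteq S_0$ contains all primes over $2$, Lemma~\ref{L:Selmer ratio}(\ref{L:Selmer ratio ii}) gives $\tfrac12|\Im(\delta_{E'_t,\p})|=c_\p(E'_t)/c_\p(E_t)$. If $\p\notin\{\q_1,\q_2\}\cup\{\p_e\}$, then Lemma~\ref{L:good reduction for Et} gives good reduction of both curves at $\p$, so both Tamagawa numbers are $1$ and the ratio is $1$.

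For $\p=\p_e$, note $v_\p(n)=0$ by Lemma~\ref{L:pe valuation}, and $n\in\OO_\p$ since $\p\notin S$. Thus $v_\p(t-e)=v_\p(m-en)-v_\p(n)=1$, and Lemma~\ref{L:sieve input} (valid for all $\p\notin S_0$ after our enlargement of $S_0$) gives the value $2$, $\tfrac12$ or $1$ according as $e\in\calM,\calM',\calA$. For $\p=\q_1$, Proposition~\ref{P:additive}(\ref{P:additive c}) gives $v_{\q_1}(n)=0$ and $v_{\q_1}(m-en)=1$ for some $e\in\calM$, hence $v_{\q_1}(t-e)=1$ and the value is $2$. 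The case $\q_2$ is identical using (\ref{P:additive d}) and $e\in\calM'$, giving $\tfrac12$.

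One point needs checking: $\q_1,\q_2$ must be distinct from the $\p_e$, so that each prime is counted once. This holds by construction, since the $\p_e$ are defined as primes outside $S'$. (The non-square conditions on $n$ modulo $\q_i$ are not needed here; presumably they enter the later Selmer computation.)

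For (iii), apply Lemma~\ref{L:Selmer ratio}(\ref{L:Selmer ratio i}) and split the product into $v\in S\cup S_\infty$ and $v\notin S$. The first part equals $\prod\tfrac12|\Phi_v|=1$ by (i) and Lemma~\ref{L:balanced product}. By (ii), the second part is
\[
2\cdot 2^{|\calM|}\cdot\tfrac12\cdot 2^{-|\calM'|}=2^{|\calM|-|\calM'|},
\]
using that the $\p_e$ are distinct. The only potential obstacle is confirming that the hypotheses of Lemma~\ref{L:sieve input} apply at $\q_1$, $\q_2$ and the $\p_e$. This needs $\p\notin S_0$, which holds, and $v_\p(t-e)=1$ with $t$ in $K$, which was verified above.
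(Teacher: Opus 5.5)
Your proof is correct and follows the paper's argument essentially step by step. Part (i) comes from Lemma~\ref{L:balance}(\ref{L:balance a}); part (ii) applies Lemma~\ref{L:sieve input} at $\q_1$, $\q_2$ and the $\p_e$ (with $v_\p(t-e)=1$ obtained from Lemma~\ref{L:pe valuation} and Proposition~\ref{P:additive}) and uses good reduction elsewhere; part (iii) splits the product of Lemma~\ref{L:Selmer ratio}(\ref{L:Selmer ratio i}) and invokes Lemma~\ref{L:balanced product}. The only cosmetic difference is at the good-reduction primes: you compare Tamagawa numbers via Lemma~\ref{L:Selmer ratio}(\ref{L:Selmer ratio ii}), whereas the paper identifies the local image with $\OO_\p^\times/(\OO_\p^\times)^2$ directly.
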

\begin{proof}
Part (\ref{L:final sieve ingredients i}) follows from Lemma~\ref{L:balance}(\ref{L:balance a}) and that $t=m/n$ is an element of $K\cap U_v$ for all $v\in S\cup S_\infty$.   Take any nonzero prime ideal $\p\notin S$ of $\OO_K$.

Consider the case where $\p=\p_e$ for some $e\in \calB$.  We have $v_\p(t-e)=v_\p(m-en)-v_\p(n)=1$, where the last equality uses Lemma~\ref{L:pe valuation}. Part (\ref{L:final sieve ingredients ii}) for $\p=\p_e$ follows from Lemma~\ref{L:sieve input}.

By Propostion~\ref{P:additive} and our choice of $t$, we have $v_{\q_i}(t-e)=1$ for some $e\in \calM$ when $i=1$ and some $e\in \calM'$ when $i=2$. Part (\ref{L:final sieve ingredients ii}) for $\p\in \{\q_1,\q_2\}$ follows from Lemma~\ref{L:sieve input}.

For part (\ref{L:final sieve ingredients ii}), it remains to consider the case where $\p \notin S\cup \{\q_1,\q_2\}\cup \{\p_e: e\in \calB\}$.   The elliptic curve $E_t'$ has good reduction at $\p$ by Lemma~\ref{L:good reduction for Et}.  Therefore, $\Im(\delta_{E_t',\p})=\OO_\p^\times/(\OO_\p^\times)^2$ and hence $\tfrac{1}{2}|\Im(\delta_{E_t',\p})|=1$.  

We now prove (\ref{L:final sieve ingredients iii}).  Set $\tau:=|\Sel_{\phi_t}(E_t/K)|/|\Sel_{\hat\phi_t}(E'_t/K)| $.  By Lemma~\ref{L:Selmer ratio}(\ref{L:Selmer ratio i}), we have $\tau= \prod_v \tfrac{1}{2} |\operatorname{Im}(\delta_{E'_t,v})|$,
where the product is over the places $v$ of $K$.  By part (\ref{L:final sieve ingredients i}) and Lemma~\ref{L:balanced product}, we have
\[
\prod_{v\in S\cup S_\infty} \tfrac{1}{2} |\operatorname{Im}(\delta_{E'_t,v})| = \prod_{v\in S\cup S_\infty} \tfrac{1}{2} |\Phi_v|=1.
\]  
Therefore,  $\tau= \prod_{\p \notin S}\tfrac{1}{2} |\operatorname{Im}(\delta_{E'_t,\p})|$, where $\p$ varies over the nonzero prime ideals of $\OO_K$ not in $S$.  From part (\ref{L:final sieve ingredients ii}), we find that $\tau=2^{1+|\calM|} \cdot (\tfrac{1}{2})^{1+|\calM'|}=2^{|\calM|-|\calM'|}$.
\end{proof}

We can now compute $\Sel_{\phi_t}(E_t/K)$ and show that it is as small as possible given what we know about the image of $\delta_{E'_t}$.

\begin{lemma} \label{L:Selmer computation}
We have $\Sel_{\phi_t}(E_t/K)= \delta_{E'_t}(E'_t(K))= \{ f(t)\cdot (K^\times)^2 : f\in \scrF'\}$.
\end{lemma}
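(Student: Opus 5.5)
The plan is to prove the reverse inclusion $\Sel_{\phi_t}(E_t/K)\subseteq \{ f(t)\cdot (K^\times)^2 : f\in \scrF'\}$, since the other inclusions are already available. Lemma~\ref{L:geometric inclusion} with $t\notin D$ gives $\{ f(t)\cdot (K^\times)^2 : f\in\scrF'\}\subseteq \delta_{E'_t}(E'_t(K))$, and $\delta_{E'_t}(E'_t(K))\subseteq \Sel_{\phi_t}(E_t/K)$ always holds. So all three groups coincide once the reverse inclusion is proved. I would take an arbitrary $\alpha\in\Sel_{\phi_t}(E_t/K)$, choose a representative $w\in K^\times$, and pin it down prime by prime.

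\textbf{Step 1 (support of $w$).} Let $\p\notin S\cup\{\q_1,\q_2\}\cup\{\p_e : e\in\calB\}$. By Lemma~\ref{L:good reduction for Et}, $E_t$ and $E'_t$ have good reduction at $\p$, so Lemma~\ref{L:HP inclusion}(\ref{L:HP inclusion ii}) gives $\Im(\delta_{E'_t,\p})\subseteq \OO_\p^\times/(\OO_\p^\times)^2$, and hence $v_\p(w)$ is even. At the primes where Lemma~\ref{L:final sieve ingredients}(\ref{L:final sieve ingredients ii}) gives $\tfrac12|\Im(\delta_{E'_t,\p})|=\tfrac12$, the local image is trivial, so $w$ is a local square there. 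This covers $\q_2$ and the $\p_e$ with $e$ in one of $\calM,\calM'$. At the remaining $\p_e$, I would use the finer description from Lemma~\ref{L:HP inclusion}(\ref{L:HP inclusion i}), applied via conditions (\ref{I:new c})/(\ref{I:new d}), to identify exactly which square classes are allowed. I expect the valuation of $w$ to be even or odd according to $e$ lying in the support of the polynomials in $\scrF'$, namely the roots $\calA\cup\calM'$ of $a^2-4b$.

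\textbf{Step 2 (clearing the $\p_e$).} For $f=c\prod_{e\in I}(T-e)\in\scrF'$ with $|I|$ even, we have $f(t)=c\,n^{-|I|}\prod_{e\in I}(m-en)$, and $n^{-|I|}$ is a square. By (\ref{E:pe}), $f(t)$ has odd valuation at $\p_e$ exactly for $e\in I$, and otherwise it is supported on $S\cup\{\q_1,\q_2\}$. Multiplying $w$ by a suitable $f(t)$ therefore makes all valuations at the $\p_e$ even; the even-degree constraint is met using the normalizing point $e_0$ and a parity count. The resulting $w'$ then has even valuation at every prime outside $S\cup\{\q_1,\q_2\}$. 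Since $\OO_{K,S}$ is a PID, and since $\q_1=\pi\OO_K$, we can write $w'=u\,\pi^{\varepsilon_1}\,\varpi^{\varepsilon_2}\cdot(\text{square})$ with $u\in\OO_{K,S}^\times$, where $\varpi$ is an $S$-generator of $\q_2$.

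\textbf{Step 3 (killing the unit and the $\q_i$ parts).} This is where I expect the main obstacle: the auxiliary primes $\q_1,\q_2$ and the conditions ``$n$ is not a square modulo $\q_i$'' must be used to force $\varepsilon_1=\varepsilon_2=0$. For $\q_2$, the local image is trivial, so $w'$ is a square in $K_{\q_2}$. By Lemma~\ref{L:q existence}, every element of $\OO_{K,S\cup\{\q_1\}}^\times$ is a square in $K_{\q_2}$, and this should force $\varepsilon_2=0$. For $\q_1$, the local image is everything, so there is no direct constraint. Instead I would try the following: at $\q_1$, Lemma~\ref{L:HP inclusion} shows the nontrivial classes of $\Im(\delta_{E'_t,\q_1})$ are represented by $a(t)$ up to squares. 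I would compute that class using $v_{\q_1}(t-e)=1$ with $e\in\calM$, together with the non-squareness of $n$ modulo $\q_1$, to see which residue classes survive. Comparing with $f(t)$ for $f\in\scrF'$ (whose $\q_1$-behaviour is controlled by $n$ modulo $\q_1$), I would then show that any leftover $\pi$-factor is incompatible. Once $\varepsilon_1=\varepsilon_2=0$, the local conditions give $u\in\Phi_v$ for all $v\in S\cup S_\infty$, because $\pi$ is a square at each such $v$ by Lemma~\ref{L:pi existence} and $w'$ lies in the Selmer group. Then Lemma~\ref{L:balance}(\ref{L:balance b}) forces $u$ to be a square. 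Hence $\alpha=f(t)\cdot(K^\times)^2$ for some $f\in\scrF'$.

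If the direct $\q_1$ analysis proves awkward, I would fall back on counting. I would run the symmetric argument for $\Sel_{\hat\phi_t}(E'_t/K)$, where the local conditions at $\q_1$ and at the $\calM$-primes are the trivial ones. That would show $|\Sel_{\hat\phi_t}(E'_t/K)|$ equals the known lower bound $|\scrF|=2^{|\calA|+|\calM|-1}$. Lemma~\ref{L:final sieve ingredients}(\ref{L:final sieve ingredients iii}) then pins down $|\Sel_{\phi_t}(E_t/K)|$, and comparing with the inclusion $\{f(t)\cdot(K^\times)^2: f\in\scrF'\}\subseteq\Sel_{\phi_t}(E_t/K)$ gives equality. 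Before committing to this route, I would check that the resulting power of $2$ matches $2^{|\calA|+|\calM'|-1}$ and that the subgroup has exactly $|\scrF'|$ elements. The latter requires the values $f(t)$ to be independent in $K^\times/(K^\times)^2$, which follows from their distinct odd valuations at the $\p_e$.
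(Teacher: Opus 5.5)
\emph{Gaps in the proposal.} Your reduction to the reverse inclusion, the support computation, and the final $S$-unit step via Lemma~\ref{L:balance}(\ref{L:balance b}) all match the paper. However, the two steps that carry the argument are missing.

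\emph{Parity.} After clearing squares, a Selmer class is $c_1\prod_{e\in B}(m-en)$. It can be absorbed into some $f(t)$ with $f\in\scrF'$ only if $|B|$ is even. ``The normalizing point $e_0$ and a parity count'' does not show this, and nothing you wrote rules out $|B|$ odd. In the paper this is the real job of the auxiliary prime with trivial local condition at which $n$ is a nonsquare (printed as $\q_2$). There, every difference $e'-e$ of points of $\calB$ is an $S_0$-unit and $c_1\in\OO_{K,S\cup\{\q_1\}}^\times$, so both are squares by Lemma~\ref{L:q existence}. Each $m-en\equiv(e'-e)n$ is a nonsquare, so $|B|$ must be even. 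You use $\q_2$ only to get $\varepsilon_2=0$, which is automatic from valuations.

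\emph{The $\pi$-part.} If the local condition at $\q_1$ is everything, no local analysis at $\q_1$ can exclude $\pi$. Your appeal there to Lemma~\ref{L:HP inclusion}, which would allow at most two classes, is inconsistent with the four-element image you just quoted. The paper's mechanism is global. Take $e\in\calM$ and $e'\in\calM'$ as in Proposition~\ref{P:additive}(\ref{P:additive c}),(\ref{P:additive d}). Then $m-e'n\equiv(e-e')n$ is a nonsquare mod $\q_1$. Quadratic reciprocity applies because $\pi$ is a square at all archimedean and dyadic places and $m-e'n$ is prime to $2$. Combined with $\pi$ being a square at the other primes of $S$ and at $\q_2$, it gives $\legendre{\pi}{\p_{e'}}=-1$. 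The trivial local condition at $\p_{e'}$ then kills $\pi$. Lemma~\ref{L:pi existence} and Proposition~\ref{P:additive}(\ref{P:additive c})--(\ref{P:additive e}) exist exactly for this, and your plan never uses them. Your counting fallback needs the same two ingredients for the other Selmer group, so it does not bypass the gap.

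\emph{On the printed local values.} Your Step~1 instinct is correct, and it conflicts with the printed lemma you use in Step~3. For $e\in\calM$, the curves $E_t$ and $E'_t$ have types $\operatorname{I}_{2n}$ and $\operatorname{I}_n$ at $\p_e$. So Lemma~\ref{L:HP inclusion}(\ref{L:HP inclusion i}) with condition~(\ref{I:new c}) makes $\Im(\delta_{E'_t,\p_e})$ trivial; equivalently, $c_\p(E'_t)/c_\p(E_t)=\tfrac12$. Thus the values in Lemmas~\ref{L:sieve input} and \ref{L:final sieve ingredients}(\ref{L:final sieve ingredients ii}),(\ref{L:final sieve ingredients iii}) have $\calM$ and $\calM'$ interchanged. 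Two checks confirm this. Part (\ref{L:final sieve ingredients iii}) agrees with the counts in Lemma~\ref{L:almost rank} only if $|\calM|=|\calM'|$, so the consistency check you planned would fail. Also, the printed proof's $B\subseteq\calA\cup\calM$ cannot yield an element of $\scrF'$.

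With correct values, the $\phi_t$-conditions are trivial at $\q_1$ and the $\calM$-primes, and unrestricted at $\q_2$ and the $\calM'$-primes. With the printed choices, it is then the $\q_2$-part that would need excluding. The reciprocity trick is unavailable for it, because $\q_2$ was not chosen to have a generator that is a square on $S\cup S_\infty$. Read Lemmas~\ref{L:sieve input}, \ref{L:balance}, \ref{L:final sieve ingredients} and the paper's proof of this lemma with $\delta_{E_t}$ in place of $\delta_{E'_t}$ and $\scrF$ in place of $\scrF'$. Then the argument is precisely the direct computation $\Sel_{\hat\phi_t}(E'_t/K)=\{f(t)\cdot(K^\times)^2: f\in\scrF\}$. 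Its conditions are unrestricted at $\q_1$ and the $\calM$-primes and trivial at $\q_2$ and the $\calM'$-primes, the reverse of what you wrote for your fallback. The stated lemma then follows by your counting route, using $|\Sel_{\phi_t}(E_t/K)|/|\Sel_{\hat\phi_t}(E'_t/K)|=2^{|\calM'|-|\calM|}$. Alternatively, swap $\calM$ and $\calM'$ in Proposition~\ref{P:additive}(\ref{P:additive c}),(\ref{P:additive d}) and run the argument for $\Sel_{\phi_t}$ directly. Either way, you still have to supply the parity and reciprocity steps.
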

\begin{proof}
We have inclusions 
\begin{align} \label{E:Selmer inclusions final}
\Sel_{\phi_t}(E_t/K)\supseteq \delta_{E'_t}(E'_t(K))\supseteq \{ f(t)\cdot (K^\times)^2 : f\in \scrF'\}
\end{align}
from the definition of the Selmer group and Lemma~\ref{L:geometric inclusion}.   Take any $\alpha \in \Sel_{\phi_t}(E_t/K) \subseteq K^\times/(K^\times)^2$.   To prove the lemma, it suffices to show that $\alpha=f(t)\cdot (K^\times)^2$ for some $f\in \scrF'$.  We have $\alpha=c\cdot (K^\times)^2$ for some $c\in K^\times$.   

Take any nonzero prime ideal $\p$ of $\OO_K$ that is not in the set
\[
S'':=S\cup \{\q_1\}\cup\{\p_e: e \in \calA \cup \calM\}.
\]
If $\p\in \{\q_2\}\cup \{\p_e : e\in \calM'\}$, then $\Im(\delta_{E'_t,\p})=1$ by Lemma~\ref{L:final sieve ingredients}(\ref{L:final sieve ingredients ii}).  If $\p\notin  \{\q_1,\q_2\}\cup\{\p_e: e \in \calB\}$, then $E'_t$ has good reduction at $\p$ by Lemma~\ref{L:good reduction for Et} and hence $\Im(\delta_{E'_t,\p})=\OO_\p^\times/(\OO_\p^\times)^2$.   We have $c\cdot (K_\p^\times)^2 \in \Im(\delta_{E'_t,\p})$ since $\alpha \in \Sel_{\phi_t}(E_t/K)$ and thus $v_\p(c)\equiv 0 \pmod{2}$ for all $\p\notin  S''$.   We chose $S_0$ so that $\OO_{K,S_0}$ was a PID and hence $\OO_{K,S''}$ is also a PID.  So after multiplying $c$ by an appropriate square in $K^\times$, we may assume that $c\in \OO_{K,S''}^\times$.  By Lemma~\ref{L:pe valuation} and (\ref{E:pe}), we have $c=c_1 \prod_{e\in \calA\cup \calM} (m-en)^{g_e}$ for unique $g_e\in \ZZ$ and a unique $c_1 \in \OO_{K,S\cup\{\q_1\}}$.  After multiplying $c$ by a square in $K^\times$, we may assume that 
\begin{align}\label{E:c1 and B}
c=c_1 \prod_{e\in B}(m-en)
\end{align}
for some subset $B\subseteq \calA\cup \calM$ and unit $c_1 \in \OO_{K,S\cup\{\q_1\}}^\times$.

We claim that $m-en\in\OO_{K,S}$ is not a square modulo $\q_2$ for all $e\in \calA \cup \calM$.    Take any $e\in \calA\cup \calM$.  By our choice of $m$ and $n$, there is an $e'\in \calM'$ for which $m-e'n \equiv 0 \pmod{\q_2}$.  We have 
\[
m-en=(m-e'n)+(e'-e)n \equiv (e'-e) n \pmod{\q_2}.
\]
By our choice of $S_0$ and $\q_2$, $e'-e$ is a nonzero square modulo $\q_2$.  By our choice of $n$, $n$ is not a square modulo $\q_2$.  Therefore, $m-en$ is not a square modulo $\q_2$ as claimed.

We have $c\cdot (K_{\q_2}^\times)^2 \in \Im(\delta_{E'_t,\q_2})$ since $\alpha \in \Sel_{\phi_t}(E_t/K)$.   Therefore, $c$ is a square in $K_{\q_2}$ by Lemma~\ref{L:final sieve ingredients}(\ref{L:final sieve ingredients ii}).  We have $c_1 \in \OO_{K,S\cup\{\q_1\}}^\times$ and hence $c_1$ is a nonzero square modulo $\q_2$ by our choice of $\q_2$.  By the above claim and (\ref{E:c1 and B}), we find that $c\in \OO_{\q_2}^\times$ is a square modulo $\q_2$ if and only if $|B|$ is even.    Since $c$ is a square in $K_{\q_2}$ we deduce that $|B|$ is even.  We have $n^{|B|} \in (K^\times)^2$ since $|B|$ is even and hence we may assume that $c$ was chosen such that 
\[
c=c_1 \prod_{e\in B}(t-e)
\]
with a subset $B\subseteq \calA\cup \calM$ of even cardinality and a unit $c_1 \in \OO_{K,S\cup\{\q_1\}}^\times$.   Since $|B|$ is even, there is a unique $f\in \scrF'$ such that $f=c_2\prod_{e\in B}(T-e)$ with a constant $c_2\in \OO_{K,S_0}^\times$.   Using the inclusions (\ref{E:Selmer inclusions final}), there is no harm in multiplying $\alpha$ by $f(t) \cdot (K^\times)^2$. 

So without loss of generality, we may assume that $\alpha=c\cdot (K^\times)^2$ with $c\in \OO_{K,S\cup\{\q_1\}}^\times$.  It suffices to show that $\alpha=1$.    Since $\pi$ is a prime in $\OO_{K}$ that generates $\q_1$, we may assume that $c$ was chosen so that $c=u \pi^g$ with $u\in \OO_{K,S}^\times$ and $g\in \{0,1\}$.   Take any $v\in S \cup S_\infty$.    We have $c \cdot (K_v^\times)^2\in \Im(\delta_{E'_t,v})$ since $\alpha \in \Sel_{\phi_t}(E_t/K)$.  Since $t=m/n$ lies in $U_v$ by our choice of $m$ and $n$, Lemma~\ref{L:balance} implies that $c \cdot (K_v^\times)^2\in \Im(\delta_{E'_t,v})= \Phi_v$.  The prime $\pi$ was chosen so that it is a square in $K_v$ and hence $u \cdot (K_v^\times)^2\in \Phi_v$.  Since $u\in \OO_{K,S}^\times$ satisfies $u \cdot (K_v^\times)^2\in \Phi_v$ for all $v\in S\cup S_\infty$, we deduce that $u$ is a square in $\OO_{K,S}^\times$ by the isomorphism of Lemma~\ref{L:balance}(\ref{L:balance b}).   Therefore, $\alpha = \pi^g\cdot (K^\times)^2$ for some $g\in \{0,1\}$.

Fix an $e\in \calM$ satisfying Proposition~\ref{P:additive}(\ref{P:additive c}).  Fix an $e'\in \calM'$ satisfying Proposition~\ref{P:additive}(\ref{P:additive d}).

We claim that $m-e'n$ is not a square modulo $\q_1$.   We have $m-en \equiv 0 \pmod{\q_1}$ and hence $m-e'n = (m-en)+(e-e')n\equiv (e-e') n\pmod{\q_1}$.   Since $e-e'$ lies in $\OO_{K,S_0}^\times$ it is a nonzero square modulo $\q_1$.   By our choice of $m$ and $n$, we know that $n$ is not a square modulo $\q_1$.  Therefore, $m-e'n$ is not a square modulo $\q_1$ as claimed.

We have $v_\p(m-e'n)=0$ for all prime ideals of $\OO_K$ dividing $2$ by Proposition~\ref{P:additive}(\ref{P:additive e}).  From the above claim, we have $v_{\q_1}(m-e'n)=0$.  So (\ref{E:pe}) implies that 
\[
(m-e'n)\OO_K=\p_{e'} \prod_{\p \in S\cup\{\q_2\},\,\p \nmid 2} \p^{f_\p}
\] 
for unique $f_\p\in \ZZ$.   Using second power residue symbols for the field $K$, cf.~\cite[VI \S8]{Neukirch}, we have
\[
\legendre{m-e'n}{\pi}=\legendre{\pi}{m-e'n} = \legendre{\pi}{\p_{e'}} \cdot \prod_{\p\in S\cup \{\q_2\},\, \p\nmid 2} \legendre{\pi}{\p}^{f_\p} = \legendre{\pi}{\p_{e'}},
\]
where the first equality uses the general reciprocity law \cite[VI Theorem 8.3]{Neukirch} and the last equality uses Lemmas~\ref{L:pi existence} and \ref{L:q existence}.   Note that in applying the reciprocity law, we have used Lemma~\ref{L:pi existence} which shows that our $\pi$ is a square in $K_v$ for all places $v$ of $K$ that are infinite or divide $2$.  From the claim above, $m-e'n$ is not a square modulo $\q_1=\pi\OO_K$ and hence $\legendre{\pi}{\p_{e'}}=-1$.  Therefore, $\pi$ is not a square in $K_{\p_{e'}}$.  We have $\Im(\delta_{E'_t,\p_{e'}})=1$ by Lemma~\ref{L:final sieve ingredients}(\ref{L:final sieve ingredients ii}) and hence   $\pi\cdot (K_{\p_{e'}}^\times)^2 \notin \Im(\delta_{E'_t,\p_{e'}})$.  Since $\alpha =\pi^g\cdot (K^\times)^2 \in \Sel_{\phi_t}(E_t/K)$ this implies that $g=0$ and hence $\alpha=1$.
\end{proof}

We can now compute the dimensions of the images of $\delta_{E_t}$ and $\delta_{E'_t}$ from which we will be able to determine the rank of $E_t$.

\begin{lemma} \label{L:almost rank}
\begin{romanenum}
\item \label{L:almost rank i}
We have $\Sel_{\hat\phi_t}(E'_t/K)= \delta_{E_t}(E_t(K))= \{ f(t)\cdot (K^\times)^2 : f\in \scrF\}$.
\item \label{L:almost rank ii}
As vector spaces over $\FF_2$, $\delta_{E_t}(E_t(K))$ and $\delta_{E'_t}(E'_t(K))$ have dimensions $|\calA|+|\calM|-1$ and $|\calA|+|\calM'|-1$, respectively.
\end{romanenum}
\end{lemma}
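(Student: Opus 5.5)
Part (i) follows by counting, using the Selmer ratio of Lemma~\ref{L:final sieve ingredients}(\ref{L:final sieve ingredients iii}) together with the exact computation of $\Sel_{\phi_t}(E_t/K)$ in Lemma~\ref{L:Selmer computation}; no second descent is needed. First I check the size of the explicit set $\{f(t)\cdot(K^\times)^2 : f\in\scrF'\}$. Since $\scrF'$ consists of the squarefree even-degree divisors of $a^2-4b$ normalized by $f(e_0)=1$, and the roots of $a^2-4b$ are $\calA\cup\calM'$, we have $|\scrF'|=2^{|\calA|+|\calM'|-1}$. Hence $\delta_{E'_t}(E'_t(K))$ has cardinality at most $2^{|\calA|+|\calM'|-1}$. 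By Lemma~\ref{L:Selmer computation} it equals $\Sel_{\phi_t}(E_t/K)$, so
\[
|\Sel_{\phi_t}(E_t/K)| \leq 2^{|\calA|+|\calM'|-1}.
\]
Lemma~\ref{L:final sieve ingredients}(\ref{L:final sieve ingredients iii}) then gives
\[
|\Sel_{\hat\phi_t}(E'_t/K)| = 2^{|\calM'|-|\calM|}|\Sel_{\phi_t}(E_t/K)|\leq 2^{|\calA|+|\calM|-1}.
\]

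Next I bound from below. Lemma~\ref{L:geometric inclusion} gives $\Sel_{\hat\phi_t}(E'_t/K)\supseteq\delta_{E_t}(E_t(K))\supseteq\{f(t)(K^\times)^2: f\in\scrF\}$. I want the rightmost set to have exactly $2^{|\calA|+|\calM|-1}=|\scrF|$ elements, so I need the map $f\mapsto f(t)(K^\times)^2$ on $\scrF$ to be injective. This injectivity is the main obstacle.

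To prove it, I use the primes $\p_e$. Since $\scrF$ is a group modulo squares and constants, it suffices to show the following: if $f\in\scrF$ is not a square in $K(T)$, then $f(t)$ is not a square in $K$. Write $f=c\prod_{e\in B}(T-e)$ with $B\subseteq\calA\cup\calM$ nonempty of even size and $c\in K^\times$. Enlarge $S_0$ beforehand, which the paper allows, so that the constants $c$ occurring in $\scrF$ lie in $\OO_{K,S_0}^\times$. Then for $e\in B$, Lemma~\ref{L:pe valuation} gives $v_{\p_e}(f(t))=v_{\p_e}(t-e)=1$, which is odd, so $f(t)$ is not a square. If $B=\emptyset$, then $f=c$ with $f(e_0')=1$, so $f=1$.

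Combining the two bounds forces equality throughout. This proves (i) and shows that $\delta_{E_t}(E_t(K))$ has dimension $|\calA|+|\calM|-1$. Running the same injectivity argument for $\scrF'$ with the primes $\p_e$ for $e\in\calA\cup\calM'$ shows that $\delta_{E'_t}(E'_t(K))$ has dimension $|\calA|+|\calM'|-1$, which is (ii).
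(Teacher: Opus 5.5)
Your proof is correct and is essentially the paper's argument. Like the paper, you show that $f\mapsto f(t)\cdot(K^\times)^2$ is injective on $\scrF$ and $\scrF'$ via the parity of $v_{\p_e}(f(t))$ (Lemma~\ref{L:pe valuation}), and then use Lemma~\ref{L:Selmer computation} with the Selmer ratio of Lemma~\ref{L:final sieve ingredients}(\ref{L:final sieve ingredients iii}) to force equality in $\Sel_{\hat\phi_t}(E'_t/K)\supseteq\delta_{E_t}(E_t(K))\supseteq\{f(t)\cdot(K^\times)^2: f\in\scrF\}$; the only cosmetic point is that no enlargement of $S_0$ is needed, since $f(e_0')=1$ gives $c=\prod_{e\in B}(e_0'-e)^{-1}$, which is already an $S_0$-unit by the choice of $S_0$.
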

\begin{proof}
For each subset $B\subseteq \calA\cup \calM$ with even cardinality, let $c_B \in K^\times$ be the unique value for which $f_B:=c_B \prod_{e\in B}(T-e)$ lies in $\scrF$;  each $f\in \scrF$ is of the form $f_B$ for a unique such $B$.  By our choice of $S_0$, we have $c_B\in \OO_{K,S_0}^\times$.   Since $f_B(t)=c_Bn^{-|B|}\prod_{e\in B}(m-en)$, Lemma~\ref{L:pe valuation} implies that $v_{\p_e}(f_B(t))=1$ if $e\in B$ and  $v_{\p_e}(f_B(t))=0$ if $e\in (\calA\cup \calM)-B$. In particular, we can recover any polynomial $f\in \scrF$ from the square class $f(t)\cdot (K^\times)^2$.   This proves that 
\[
|\{f(t)\cdot (K^\times)^2: f \in \scrF\}|=|\scrF|=2^{|\calA|+|\calM|-1}.
\]  
Similarly, we have $|\{f(t)\cdot (K^\times)^2: f \in \scrF'\}|=|\scrF'|=2^{|\calA|+|\calM'|-1}$.

By Lemma~\ref{L:Selmer computation}, we have $|\Sel_{\phi_t}(E_t/K)|= |\delta_{E'_t}(E'_t(K))|=2^{|\calA|+|\calM'|-1}$.  By Lemma~\ref{L:final sieve ingredients}(\ref{L:final sieve ingredients iii}) this implies that $|\Sel_{\hat\phi_t}(E'_t/K)|=2^{|\calA|+|\calM|-1}$.  Now consider the inclusions
\begin{align}\label{E:last Selmer inclusions}
\Sel_{\hat\phi_t}(E'_t/K)\supseteq \delta_{E_t}(E_t(K))\supseteq \{ f(t)\cdot (K^\times)^2 : f\in \scrF\} 
\end{align}
coming from the definition of the Selmer group and Lemma~\ref{L:geometric inclusion}.  We have $|\delta_{E_t}(E_t(K))|=2^{|\calA|+|\calM|-1}$ since we have shown that the other two groups occurring in (\ref{E:last Selmer inclusions}) have this cardinality.   In particular, the inclusions in (\ref{E:last Selmer inclusions}) are actually equality which gives (\ref{L:almost rank i}).   Part (\ref{L:almost rank ii}) is immediate since we have found the cardinality of the image of $\delta_{E_t}$ and the image of $\delta_{E'_t}$.
\end{proof}

By Lemma~\ref{L:almost rank}(\ref{L:almost rank ii}), we have 
\[
\dim_{\FF_2} \delta_{E_t}(E_t(K)) + \dim_{\FF_2} \delta_{E'_t}(E'_t(K)) -2  = (|\calA|+|\calM|-1)+(|\calA|+|\calM'|-1)-2=r.
\]  
Therefore, the elliptic curve $E_t$ over $K$ has rank $r$ by Lemma~\ref{L:rank via quotients}.

\subsection{End of proof}
The following result was proved in \S\ref{SS:choice of elliptic curve}.

\begin{lemma} \label{L:final}
For any finite set $D$ satisfying $\calB\subseteq D\subseteq K$, there is a $t\in K- D$ such that the elliptic curve $E_t$ over $K$ has rank $r$.  \qed
\end{lemma}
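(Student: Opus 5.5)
The plan is to rerun the construction of \S\ref{SS:choice of primes} and \S\ref{SS:choice of elliptic curve} with the given finite set $D$ in place of the set produced by Lemma~\ref{L:geometric inclusion}. The only point to check is that $D$ enters only through the requirement $t\notin D$. First we enlarge $D$ by a finite set so that it contains the set from Lemma~\ref{L:geometric inclusion}. Enlarging $D$ only strengthens the conclusion $t\notin D$, and the inclusions of Lemma~\ref{L:geometric inclusion} still hold for all $t\in K-D$. We then fix $S_0$ and, via Lemma~\ref{L:balance} with this $D$, obtain $S$, the open sets $U_v$ and the subgroups $\Phi_v$. Finally we choose $\pi$ and $\q_1$ by Lemma~\ref{L:pi existence}, $\q_2$ by Lemma~\ref{L:q existence}, and $m,n$ by Proposition~\ref{P:additive}.

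Next we verify that the resulting $t=m/n$ avoids $D$. In the proof of Lemma~\ref{L:balance}, each $U_v$ was chosen small enough that $U_v\cap D=\emptyset$; this is possible because $D$ is finite and $K_v$ is Hausdorff. Since $t\in U_v$ for every $v\in S\cup S_\infty$, we get $t\notin D$. All the subsequent lemmas, from Lemma~\ref{L:pe valuation} through Lemma~\ref{L:almost rank}, use $D$ only through Lemma~\ref{L:geometric inclusion}. That lemma applies to our $t$ because $t\notin D$ and $D$ contains the set it produces. Lemma~\ref{L:almost rank}(\ref{L:almost rank ii}) combined with Lemma~\ref{L:rank via quotients} then gives $\rank\, E_t(K)=r$.

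No step should be a real obstacle. The one thing to confirm is that the choice of the neighbourhoods $U_v$ in Lemma~\ref{L:balance} is compatible with an arbitrary finite $D$, and this holds by the finiteness of $D$.

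Theorem~\ref{T:general} then follows by iteration. Having found $t_1,\dots,t_k$, apply the lemma with $D\cup\{t_1,\dots,t_k\}$ to obtain a new $t_{k+1}$. This gives infinitely many $t$ with $\rank\, E_t(K)=r$. For the second assertion, note that $E$ is nonisotrivial, so $j(E)\in K(T)$ is nonconstant. Each value of $j(E_t)$ is therefore attained by only finitely many $t$, and infinitely many distinct $t$ yield infinitely many $\Kbar$-isomorphism classes.
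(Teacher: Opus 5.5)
Your proposal is correct and is essentially the paper's argument: the set $D$ in Lemma~\ref{L:geometric inclusion} was explicitly allowed to be enlarged finitely often, and $t\notin D$ is enforced through the neighbourhoods $U_v$ of Lemma~\ref{L:balance}(\ref{L:balance a}), so rerunning \S\ref{SS:choice of primes} and \S\ref{SS:choice of elliptic curve} with the enlarged $D$ gives the lemma. One small precision: the paper only shows explicitly that the balls for $v\in S_0\cup S_\infty$ avoid $D$ (for $v\in S_1$ and the second family, the one the paper mislabels $S_1$ but which is $S_2$, the neighbourhood is centred at $t_v$, so shrinking alone would not exclude $t_v$ itself), but a single archimedean $U_v$ avoiding $D$ already forces $t\notin D$.
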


Let $R$ be the set of $t\in K-\calB$ for which $E_t$ has rank $r$.   If $R$ is finite, then Lemma~\ref{L:final} with $D:=R\cup\calB$ implies that there is a $t\in K-(R\cup \calB)$ such that $E_t$ has rank $r$ which contradicts the definition of $R$.   Therefore, the set $R$ is infinite.

Let $J\in K(T)$ be the $j$-invariant of the elliptic curve $E$ over $K(T)$; it is nonconstant since by assumption $E$ has a fiber of multiplicative reduction.    For $t\in K-\calB$, the $j$-invariant $j(E_t)$ of the elliptic curve $E_t$ is equal to $J(t)\in K$.   For any $j\in K$, there are only finitely many $t\in K-\calB$ for which $J(t)=j$ since $J$ is nonconstant.  Since $R$ is infinite, so is the set $\{J(t): t\in R\}=\{j(E_t): t\in R\}$.  The theorem follows since an elliptic curve over $K$ is determined up to $\Kbar$-isomorphism by its $j$-invariant.

\section{Proof of Theorem~\ref{T:main}} \label{S:proof of main}

Take any number field $K$.  For each $0\leq r \leq 4$, we will give an elliptic curve $E$ over $K(T)$ that satisfies all the conditions in \S\ref{S:general} with this particular value of $r$.   Theorem~\ref{T:main} will then follow from Theorem~\ref{T:general}.

\subsection{Rank 0 case}

Let $E$ be the elliptic curve over $K(T)$ defined by the Weierstrass equation
\[
y^2=x^3+2x^2+Tx.
\]
With notation as in \S\ref{S:general}, we have an elliptic curve $E'$ over $K(T)$ given by the model
\[
y^2=x^3-4 x^2 -4(T-1) x.
\]
The discriminants of these models of $E$ and $E'$ are  $\Delta=-2^6 T^2 (T-1)$ and $\Delta'=2^{12}T(T-1)^2$, respectively.   Condition (\ref{I:new a}) holds since $\Delta$ factors into linear terms in $K[T]$.  One can check that 
\[
\calA=\{\infty\}, \quad\calM=\{0\}\quad \text{ and }\quad\calM'=\{1\}.
\]
The elliptic curve $E$ has Kodaira symbols $\operatorname{I}_1$ and $\operatorname{III}^*$ at $1$ and $\infty$, respectively. The elliptic curve $E'$ has Kodaira symbols $\operatorname{I}_1$ and $\operatorname{III}^*$ at $0$ and $\infty$, respectively. Therefore, conditions (\ref{I:new b})--(\ref{I:new e}) hold.

Define the point $P_0=(0,0)$ of $E(K(T))$.  Since $\delta_{E}(P_0)=T \cdot (K(T)^\times)^2$ we deduce that $\delta_{E}(E(K(T)))$ has dimension at least $|\calA|+|\calM|-1=1$ over $\FF_2$.  This verifies condition (\ref{I:new f}).
 
Define the point $Q_0=(0,0)$ of $E'(K(T))$.  Since $\delta_{E'}(Q_0)=-(T-1)\cdot (K(T)^\times)^2$ we deduce that $\delta_{E'}(E'(K(T)))$ has dimension at least $|\calA|+|\calM'|-1=1$ over $\FF_2$. This verifies condition (\ref{I:new g}).

Define $r:=2|\calA|+|\calM|+|\calM'|-4=0$.  By Theorem~\ref{T:general}, we deduce that there are infinitely many elliptic curves over $K$, up to isomorphism over $\Kbar$, of rank $r=0$.

\subsection{Rank 1 case}

Let $E$ be the elliptic curve over $K(T)$ defined by the Weierstrass equation
\[
y^2=x^3+T(T-3)x^2+ T x.
\]
With notation as in \S\ref{S:general}, we have an elliptic curve $E'$ over $K(T)$ given by the model
\[
y^2=x^3-2T(T-3) x^2 + T(T-1)^2(T-4) x.
\]
The discriminants of these models of $E$ and $E'$ are  $\Delta=2^4T^3(T-1)^2(T-4)$ and $\Delta'=2^8 T^3 (T-1)^4 (T-4)^2$, respectively.   Condition (\ref{I:new a}) holds since $\Delta$ factors into linear terms in $K[T]$.  One can check that 
\[
\calA=\{0\}, \quad \calM=\{\infty\}\quad \text{ and }\quad \calM'=\{1,4\}.
\]
The elliptic curve $E'$ has Kodaira symbol $\operatorname{I}_3$ at $\infty$.  The elliptic curve $E$ has split multiplicative reduction at $1$ and has Kodaira symbol $\operatorname{I}_1$ at $4$.  The elliptic curves $E$ and $E'$ both have Kodaira symbol $\operatorname{III}$ at $0$. Therefore, conditions (\ref{I:new b})--(\ref{I:new e}) hold.

Define the point $P_0=(0,0)$ of $E(K(T))$.  Since $\delta_{E}(P_0)=T \cdot (K(T)^\times)^2$ we deduce that $\delta_{E}(E(K(T)))$ has dimension at least $|\calA|+|\calM|-1=1$ over $\FF_2$.  This verifies condition (\ref{I:new f}).
 
Define the points $Q_0=(0,0)$ and $Q_1=(T(T-1),2T(T-1))$ of $E'(K(T))$.  Since $\delta_{E'}(Q_0)=T(T-4) \cdot (K(T)^\times)^2$ and $\delta_{E'}(Q_1)=T(T-1)\cdot (K(T)^\times)^2$ we deduce that $\delta_{E'}(E'(K(T)))$ has dimension at least $|\calA|+|\calM'|-1=2$ over $\FF_2$.   This verifies condition (\ref{I:new g}).

Define $r:=2|\calA|+|\calM|+|\calM'|-4=1$.  By Theorem~\ref{T:general}, we deduce that there are infinitely many elliptic curves over $K$, up to isomorphism over $\Kbar$, of rank $r=1$.

\subsection{Rank 2 case}

Let $E$ be the elliptic curve over $K(T)$ defined by the Weierstrass equation
\[
y^2=x^3+10(T+16)x^2+ 9T(T+16) x.
\]
With notation as in \S\ref{S:general}, we have an elliptic curve $E'$ over $K(T)$ given by the model
\[
y^2=x^3-20(T+16) x^2 +64(T+16)(T+25) x.
\]
The discriminants of these models of $E$ and $E'$ are  $\Delta=2^{10} 3^4 T^2 (T+16)^3 (T+25)$ and $\Delta'=2^{20}3^2 T (T+16)^3 (T+25)^2$, respectively.   Condition (\ref{I:new a}) holds since $\Delta$ factors into linear terms in $K[T]$.  One can check that 
\[
\calA=\{-16,\infty\}, \quad\calM=\{0\}\quad\text{ and }\quad\calM'=\{-25\}.
\]
The elliptic curve $E'$ has Kodaira symbol $\operatorname{I}_1$ at $0$. The elliptic curve $E$ has Kodaira symbol $\operatorname{I}_1$ at $-25$. The elliptic curves $E$ and $E'$ both have Kodaira symbol $\operatorname{III}$ at $-16$. The elliptic curves $E$ and $E'$ both have Kodaira symbol $\operatorname{I}_0^*$ at $\infty$ and $c_\infty(E)=c_\infty(E')=4$. Therefore, conditions (\ref{I:new b})--(\ref{I:new e}) hold.

Define the points $P_0=(0,0)$ and $P_1=(-T,4T)$ of $E(K(T))$.  Since $\delta_{E}(P_0)=T(T+16) \cdot (K(T)^\times)^2$ and $\delta_{E}(P_1)=-T\cdot (K(T)^\times)^2$ we deduce that $\delta_{E}(E(K(T)))$ has dimension at least $|\calA|+|\calM|-1=2$ over $\FF_2$. This verifies condition (\ref{I:new f}).
 
Define the points $Q_0=(0,0)$ and $Q_1=(4(T+16),48(T+16))$ of $E'(K(T))$.  Since $\delta_{E'}(Q_0)=(T+16)(T+25) \cdot (K(T)^\times)^2$ and $\delta_{E'}(Q_1)=(T+16)\cdot (K(T)^\times)^2$ we deduce that $\delta_{E'}(E'(K(T)))$ has dimension at least $|\calA|+|\calM'|-1=2$ over $\FF_2$.   This verifies condition (\ref{I:new g}).

Define $r:=2|\calA|+|\calM|+|\calM'|-4=2$.  By Theorem~\ref{T:general}, we deduce that there are infinitely many elliptic curves over $K$, up to isomorphism over $\Kbar$, of rank $r=2$.

\subsection{Rank 3 case}

Let $E$ be the elliptic curve over $K(T)$ defined by the Weierstrass equation
\[
y^2=x^3- (98T^2 - 9725)\cdot x^2+ 7^4 (T^2-2^2)(T^2-11^2) \cdot x.
\]
With notation as in \S\ref{S:general}, we have an elliptic curve $E'$ over $K(T)$ given by the model
\[
y^2=x^3+(196T^2 - 19450)\cdot x^2-2^6 3^2 5^2 7^2(T^2 - (\tfrac{3161}{280})^2)\cdot x.
\]
The discriminants of these models of $E$ and $E'$ are  $\Delta=-2^{10}3^2 5^2 7^{10} (T^2-2^2)^2 (T^2-11^2)^2 (T^2-(\tfrac{3161}{280})^2)$ and $\Delta'=2^{20}3^4 5^4 7^8 (T^2-2^2) (T^2-11^2) (T^2-(\tfrac{3161}{280})^2)^2$, respectively.   Condition (\ref{I:new a}) holds since $\Delta$ factors into linear terms in $K[T]$.  One can check that 
\[
\calA=\emptyset, \quad\calM=\{ \pm 2, \pm 11\}\quad\text{ and }\calM'=\{\pm \tfrac{3161}{280},  \infty\}.  
\]
The elliptic curve $E'$ has Kodaira symbol $\operatorname{I}_1$ at all points in $\calM$. The elliptic curve $E$ has Kodaira symbol $\operatorname{I}_1$ at the points $\pm \tfrac{3161}{280}$.  The elliptic curve $E$ has split multiplicative reduction at $\infty$.  Therefore, conditions (\ref{I:new b})--(\ref{I:new e}) hold.

Define the following points in $E(K(T))$:  
\begin{align*}
P_0 &:=(0,0),\\
P_1 &:=(7^4(T-2)(T+2), 115248 T(T-2)(T+2)),\\
P_2 &:=(7^2(T-2)(T+11), 4263(T-2)(T+11)).
\end{align*}
By considering $\delta_E(P_i)$ with $0\leq i \leq 2$, we find that $\delta_E(E(K(T)))$ contains the subgroup of $K(T)^\times/(K(T)^\times)^2$ generated by the set 
\[
\{(T-2)(T+2)(T-11)(T+11), (T-2)(T+2), (T-2)(T+11) \}.
\]
In particular, $\delta_E(E(K(T)))$ has dimension at least $|\calA|+|\calM|-1=3$ over $\FF_2$ which verifies condition (\ref{I:new f}).

Define the points $Q_0=(0,0)$ and 
\[
Q_1=(630T + 28449/4, 8820T^2 + \tfrac{177093}{2}  T - \tfrac{995715}{8})
\]
of $E'(K(T))$. By considering $\delta_{E'}(Q_0)$ and $\delta_{E'}(Q_1)$, we find that $\delta_{E'}(E'(K(T)))$ contains the subgroup of $K(T)^\times/(K(T)^\times)^2$ generated by the set 
\[
\{-(T-\tfrac{3161}{280})(T-\tfrac{3161}{280}), 70(T + \tfrac{3161}{280})\}.
\]  
In particular, $\delta_{E'}(E'(K(T)))$ has dimension at least $|\calA|+|\calM'|-1=2$ over $\FF_2$ which verifies condition (\ref{I:new g}).

Define $r:=2|\calA|+|\calM|+|\calM'|-4=3$.  By Theorem~\ref{T:general}, we deduce that there are infinitely many elliptic curves over $K$, up to isomorphism over $\Kbar$, of rank $r=3$.

\subsection{Rank 4 case}

Let $E$ be the elliptic curve over $K(T)$ defined by the Weierstrass equation
\[
y^2=x^3-70(T^2-25^2)\cdot x^2+2^4 7^2 (T^2-11^2)(T^2-25^2) \cdot x.
\]
With notation as in \S\ref{S:general}, we have an elliptic curve $E'$ over $K(T)$ given by the model
\[
y^2=x^3+140(T^2-25^2)\cdot x^2+2^2 3^2 7^2 (T^2-25^2)(T^2-39^2) \cdot x.
\]
The discriminants of these models of $E$ and $E'$ are  $\Delta=2^{14}3^2 7^6 (T^2-11^2)^2  (T^2-25^2)^3 (T^2-39^2)$ and $\Delta'=2^{16} 3^4 7^6 (T^2-11^2)(T^2-25^2)^3(t^2-39^2)^2$, respectively.   Condition (\ref{I:new a}) holds since $\Delta$ factors into linear terms in $K[T]$.  One can check that 
\[
\calA=\{\pm 25\},\quad \calM=\{\pm 11\}\quad \text{ and }\quad \calM'=\{\pm 39\}.
\]
The elliptic curve $E'$ has Kodaira symbol $\operatorname{I}_1$ at the points in $\calM$. The elliptic curve $E$ has Kodaira symbol $\operatorname{I}_1$ at the points in $\calM'$. The elliptic curves $E$ and $E'$ both have Kodaira symbol $\operatorname{III}$ at the points in $\calA$. Therefore, conditions (\ref{I:new b})--(\ref{I:new e}) hold.

Define the following points in $E(K(T))$: 
\begin{align*}
P_0 &:=(0,0),\\
P_1 &:=(14(T-11)(T+11), 1176(T-11)(T+11)),\\
P_2 &:=(2(T-11)(T-25), (36T + 780)(T-11)(T-25)).
\end{align*}
By considering $\delta_E(P_i)$ with $0\leq i \leq 2$, we find that $\delta_E(E(K(T)))$ contains the subgroup of $K(T)^\times/(K(T)^\times)^2$ generated by the set 
\[
\{(T-11)(T+11)(T-25)(T+25), 14(T-11)(T+11), 2(T-11)(T-25)\}.
\]
In particular, $\delta_E(E(K(T)))$ has dimension at least $|\calA|+|\calM|-1=3$ over $\FF_2$ which verifies condition (\ref{I:new f}).

Now consider the following points in $E'(K(T))$:
\begin{align*}
Q_0 &:=(0,0),\\
Q_1 &:=( 2(T-25)(T-39), (64T + 1760)(T-25)(T-39) )\\
Q_2 &:=( -14(T-25)(T+25), 4704(T-25)(T+25)).
 \end{align*}
 By considering $\delta_{E'}(Q_i)$ with $0\leq i \leq 2$, we find that $\delta_{E'}(E'(K(T)))$ contains the subgroup of $K(T)^\times/(K(T)^\times)^2$ generated by the set 
 \[
 \{(T-25)(T+25)(T-39)(T+39), 2(T-25)(T-39), -14(T-25)(T+25)\}.
 \]
 In particular, $\delta_{E'}(E'(K(T)))$has dimension at least $|\calA|+|\calM'|-1=3$ over $\FF_2$ which verifies condition (\ref{I:new g}).

Define $r:=2|\calA|+|\calM|+|\calM'|-4=4$.  By Theorem~\ref{T:general}, we deduce that there are infinitely many elliptic curves over $K$, up to isomorphism over $\Kbar$, of rank $r=4$.

\end{document}